\setlist[enumerate,1]{label={\upshape(\arabic*)},leftmargin=1cm}
\pgfplotsset{compat=1.5.1}
\numberwithin{equation}{section}
\newcommand{\sz}{\scriptsize}
\newcounter{AEcount}
\newcommand{\aen}[1] 
 {
 \begin{list}{(\alph{AEcount})}
 {\usecounter{AEcount}}
 #1
 \end{list}
 }
\newtheorem{thm}[equation]{Theorem}
\newtheorem{lem}[equation]{Lemma}
\newtheorem*{lem*}{Lemma}
\newtheorem{cor}[equation]{Corollary}
\newtheorem{prop}[equation]{Proposition}
\newtheorem*{prop*}{Proposition}
\newtheorem*{thm**}{Theorem\theoremnum}
\newenvironment{thm*}[1][]{%
  \edef\theoremnum{\if\relax\detokenize{#1}\relax\else~#1\fi}
  \begin{thm**}
}{%
  \end{thm**}
}
\theoremstyle{definition}
\newtheorem{defn}[equation]{Definition}
\newtheorem{conj}[equation]{Conjecture}
\newtheorem{exmp}[equation]{Example}
\theoremstyle{remark}
\newtheorem{rem}[equation]{Remark} 
\newtheorem{claim}[equation]{Claim}
\newcommand{\al}{\alpha}
\newcommand{\A}{\mathscr{A}}
\newcommand{\B}{\mathscr{B}}
\newcommand{\de}{\delta}
\newcommand{\e}{\emptyset}
\newcommand{\ep}{{\varepsilon}}
\newcommand{\F}{\mathscr{F}}
\newcommand{\I}{\mathcal{I}}
\newcommand{\lb}{\lambda}
\newcommand{\m}{\medskip}
\newcommand{\n}{\noindent}
\newcommand{\R}{\mathbb{R}}
\newcommand{\sm}{\setminus}
\newcommand{\sq}{\subseteq}
\newcommand{\Z}{{\mathbb Z}}
\begin{document}

\title{Comonotone approximation and interpolation by entire functions II}

\author{Maxim R.~Burke}

\address{School of Mathematical and Computational Sciences, University of Prince Edward Island
Charlottetown PE, Canada C1A 4P3}

\email{burke@upei.ca}

\thanks{Research supported by NSERC. The author thanks Yinhe Peng and the Academy of Mathematics and Systems Science of the Chinese Academy of Sciences for their hospitality in the fall of 2024 when some of this research was carried out.}

\subjclass{Primary 30E10, 26A48; Secondary 41A05, 41A28, 41A10.}

\keywords{piecewise monotone, co-monotone approximation, approximation by entire functions, interpolation, Hoischen theorem}

\begin{abstract}
A theorem of Hoischen states that given a positive continuous function $\varepsilon:\mathbb{R}\to\mathbb{R}$, an integer $n\geq 0$, and a closed discrete set $E\subseteq\mathbb{R}$, any $C^n$ function $f:\mathbb{R}\to\mathbb{R}$ can be approximated by an entire function $g$ so that for $k=0,\dots,n$, and $x\in\R$,
$|D^{k}g(x)-D^{k}f(x)|<\varepsilon(x)$, and if $x\in E$ then
$D^{k}g(x)=D^{k}f(x)$.
The approximating function $g$ is entire and hence piecewise monotone. Building on earlier work, for $n\leq 3$, we determine conditions under which when $f$ is piecewise monotone we can choose $g$ to be comonotone with $f$ (increasing and decreasing on the same intervals), and under which the derivatives of $g$ can be taken to be comonotone with the corresponding derivatives of $f$ if the latter are piecewise monotone. The proof for $n\leq 3$ establishes the theorem for all $n$, assuming a conjecture (shown in previous work with Haris and Madhavendra to hold for $n\leq 3$) regarding the set of $2(n+1)$-tuples $(f(0),Df(0),\dots,D^nf(0),f(1),Df(1),\dots,D^nf(1))$ of the values at the endpoints of the derivatives of a $C^n$ function $f$ on $[0,1]$ for which $D^nf$ is increasing and not constant.
\end{abstract}

\date{\today}

\maketitle

\section{Introduction}
\label{s:Introduction}

In this paper we continue the investigation begun in \cite{Bu2019} into comonotone approximation and interpolation of $C^n$ functions and their derivatives by entire functions when the derivatives are piecewise monotone.
Comonotone approximation by polynomials of functions on a compact interval has been treated extensively. See \cite{Gal}, Chapter 1. See also \cite{DZ} where characterizations are provided for the best comonotone polynomial approximation to a piecewise monotone continuous function on a compact interval, and the references therein. See the introduction to \cite{Bu2019} for historical information on the approximation of monotone functions by monotone entire functions on the real line.

Our work is motivated by the following result of Hoischen.

\begin{thm}\label{Hoischen}
{\rm\cite{Ho1975}} Let $n$ be a nonnegative integer.
Let $E\sq \R$ be a closed discrete set.
Suppose $f\colon\R\to\R$ is a $C^n$ function and
$\ep\colon\R\to\R$ is a positive continuous function. Then there
exists an entire function $g$ such that $g(\R)\sq \R$ and for all
$k=0,\dots,n$ and all $x\in\R$, $|D^kg(x)-D^kf(x)|<\ep(x)$ and
moreover, if $x\in E$ then $D^kf(x)=D^kg(x)$.
\end{thm}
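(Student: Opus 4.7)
The plan is to combine Carleman's classical theorem on approximation by entire functions with a Hermite-type interpolation construction at the closed discrete set $E$. First, I would use (a $C^n$-version of) Carleman's theorem to produce an entire function $h$ with $|D^k h(x) - D^k f(x)| < \ep(x)/2$ on $\R$ for all $k \leq n$; then correct $h$ by adding an entire function $\eta$ that matches the Taylor polynomials of $f-h$ of order $n$ at each point of $E$, while remaining uniformly small on $\R$. Setting $g = h + \eta$ yields the desired approximation and interpolation.

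For the Carleman step, one needs the statement that given a $C^n$ function and a positive continuous error tolerance, there exists an entire function whose first $n$ derivatives approximate those of $f$ within the tolerance. This can be derived from the continuous Carleman theorem by first mollifying $f$ to obtain a $C^\infty$ approximation, then applying classical Carleman to $D^n f$ with a very rapidly decaying weight to obtain an entire $h_n$, and finally integrating $n$ times with constants chosen at a base point (say $0$) to match $D^k f(0)$ for $k < n$. The weighted estimates on $D^n$ propagate to all lower derivatives provided the weight used for $D^n$ is taken much smaller than $\ep$ divided by a growing power of $|x|$.

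For the correction, set $\delta_{j,k} = D^k f(x_j) - D^k h(x_j)$ at each $x_j \in E$; by construction these are as small as we please. By the Weierstrass factorization theorem, pick an entire $W$ with a zero of order exactly $n+1$ at each $x_j$ and no other zeros, chosen symmetric so that $W(\bar z) = \overline{W(z)}$. Form a Mittag--Leffler-type series
$$
\eta(z) = \sum_j \left( \frac{W(z)\,U_j(z)}{(z - x_j)^{n+1}} - R_j(z) \right),
$$
where $U_j$ is the unique polynomial of degree $\leq n$ making the $j$-th summand have Taylor polynomial $\sum_{k=0}^{n} (\delta_{j,k}/k!)(z-x_j)^k$ at $x_j$, and $R_j$ is a polynomial subtracted to enforce uniform convergence on compact sets. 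Then $\eta$ is entire, interpolates correctly, and is real on $\R$.

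The main obstacle is arranging that $|D^k \eta(x)| < \ep(x)/2$ on all of $\R$ simultaneously with the interpolation requirement: the natural bounds on the Mittag--Leffler series amplify the data $\delta_{j,k}$ by factors depending on the growth of $W$ and the geometry of $E$. This is overcome by a two-step choice of tolerances: first pick the convergence factors in the Weierstrass product $W$, and the polynomial subtractions $R_j$, with weights compatible with the prescribed decay of $\ep$; then, knowing these quantitative bounds, return to the Carleman step and choose the tolerance for $h$ tight enough so that the $\delta_{j,k}$ are small enough to absorb the interpolation amplification. This inductive-calibration of tolerances is the key technical ingredient.
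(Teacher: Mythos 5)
The paper does not prove Theorem \ref{Hoischen}; it is quoted from Hoischen \cite{Ho1975}, so there is no internal proof to compare against. Your two-stage outline --- first get an entire $h$ approximating $f$ and its first $n$ derivatives within $\ep/2$, then add a small entire $\eta$ matching the $n$-jets of $f-h$ on $E$, calibrating the tolerance for $h$ against the amplification constants of the interpolation step --- is the standard and historically correct strategy. But two of your concrete mechanisms fail. First, the reduction of the $C^n$ Carleman theorem to the $C^0$ one by approximating $D^nf$ and integrating $n$ times from a base point does not work when $\ep(x)\to 0$ as $|x|\to\infty$: if $|h_n-D^nf|<\delta$ on $\R$, then $D^{n-1}h(x)-D^{n-1}f(x)=\int_0^x(h_n-D^nf)$, which is bounded only by $\int_0^{|x|}\delta$; this is nondecreasing in $|x|$ and generically tends to a nonzero limit, so it cannot be dominated by a decaying $\ep$ no matter how small or how rapidly decaying $\delta$ is --- the integral always retains the contribution of $\delta$ on a fixed initial segment. (This tail-versus-integral tension is exactly what Lemma \ref{l:u.drops.quickly.to.zero} of the present paper is engineered to control, via $\int_x^\infty u < u(x) < \ep(x)$.) You must either impose vanishing-moment conditions on $h_n-D^nf$ at every level of integration, or invoke the derivative form of Carleman's theorem directly (Hoischen 1972, Kaplan), which is proved by making $h-f$ small on a complex neighbourhood of $\R$ and using Cauchy estimates rather than by integration.

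Second, the interpolation step is broken in two ways. The subtracted polynomials $R_j$ do not vanish at the other nodes, whereas the principal parts $W(z)U_j(z)/(z-x_j)^{n+1}$ vanish there to order $n+1$; hence $D^k\eta(x_i)=\delta_{i,k}-\sum_j D^kR_j(x_i)$ and the prescribed jets at $x_i$ are destroyed. Even with $R_j=0$, each summand $W(z)U_j(z)/(z-x_j)^{n+1}$ is unbounded on $\R$ ($|W(x)|$ grows along the real axis while $|U_j(x)|/|x-x_j|^{n+1}$ decays only polynomially), so no calibration of the data $\delta_{j,k}$, however small, can give $|D^k\eta(x)|<\ep(x)/2$ for all real $x$. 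What is needed are summands that are globally small on $\R$ \emph{and} carry the prescribed $n$-jet at $x_j$ --- for instance adjusted polynomials times high powers of normalized $\sin(\lambda_j(z-x_j))/(\lambda_j(z-x_j))$ --- together with an argument (iteration, or forcing vanishing to order $n+1$ at the other nodes) to control their nonzero contributions at the other points of $E$. Your closing ``inductive calibration of tolerances'' is the right idea in spirit, but it cannot compensate for a family of summands that is unbounded on the real line.
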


The approximating function $g$ and its derivatives $D^jg$ are entire and hence piecewise monotone. We determine conditions under which when a $C^n$ function $f$ and its derivatives up to order $n$ are piecewise monotone we can choose $g$ so that $D^jg$ is comonotone with $D^jf$, $j=0,\dots,n$, with interpolation on the turning points.

Given a continuous functions $f\colon I\to\R$ on an interval $I$ of $\R$,
define an equivalence relation on $I$ by writing $a\sim b$ if $f$ is constant on the closed subinterval from $a$ to $b$. The equivalence classes of $I$ for this equivalence relation will be called the \emph{platforms} of the function $f$. (These are the connected components of the fibers of $f$.) The platforms are intervals and, because $f$ is continuous, they are closed. If $P=[a,b]\sq I$ is a compact platform on which $f$ has constant value $c$, and for some $\ep>0$ we have $P_\ep = (a-\ep,b+\ep)\sq I$ and either $f(x)>c$ for all $x\in P_\ep\sm P$ or $f(x)<c$ for all $x\in P_\ep\sm P$, then $P$ is called a \emph{turning platform}. The points belonging to a turning platform will be called \emph{turning points}.

We use the words increasing, decreasing and monotone in the non-strict sense, i.e., $f$ is \emph{increasing} on $I$ if $f$ is $\leq$-increasing ($x\leq y\Rightarrow f(x)\leq f(y)$), $f$ is \emph{decreasing} on $I$ if
$f$ is $\leq$-decreasing ($x\leq y\Rightarrow f(y)\leq f(x)$), and $f$ is \emph{monotone} if $f$ is increasing or decreasing. Two functions on $I$ have the \emph{same monotonicity} if they are both increasing or both decreasing.
We say that $f$ is \emph{piecewise monotone} if there is a set $K\sq I$ which is a closed discrete subset of $I$ and is such that $f$ is monotone on each component of $I\sm K$.
$K$ will be called a \emph{witnessing set} to the piecewise monotonicity of $f$.

For continuous piecewise monotone functions, the failure of monotonicity is witnessed by the existence of a turning platform.

\begin{prop}[\cite{Bu2019}, Proposition 2.1]\label{p:not piecewise monotone}
Let $I$ be a nontrivial interval of $\R$ and let $f\colon I\to\R$ be continuous and piecewise monotone. Then $f$ has a turning platform if and only if $f$ is not monotone.
\end{prop}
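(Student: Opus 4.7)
The forward direction is routine. If $P=[a,b]$ is a turning platform on which $f\equiv c$ and, say, $f>c$ on $P_\ep\sm P$, then choosing $x_1\in(a-\ep,a)$, $x_2=a$, and $x_3\in(b,b+\ep)$ yields $f(x_2)=c$ strictly below both $f(x_1)$ and $f(x_3)$, contradicting both increasing and decreasing behavior on $I$; the opposite sign ($f<c$ nearby) is symmetric.

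For the converse, I plan to reduce to a compact subinterval on which $f$ has an interior maximum (or minimum), and then use piecewise monotonicity to produce a platform there. The first step is an extraction lemma: if $f\colon I\to\R$ is any function which is neither increasing nor decreasing, then there exist $x_1<x_2<x_3$ in $I$ with $f(x_1)<f(x_2)>f(x_3)$ or $f(x_1)>f(x_2)<f(x_3)$. This follows from combining witnesses $a_1<a_2$ with $f(a_1)>f(a_2)$ and $b_1<b_2$ with $f(b_1)<f(b_2)$ by a short case analysis on their relative order (which treats disjoint, overlapping, and coincident configurations separately). I will treat the ``peak'' case; the ``valley'' case is handled symmetrically by taking the minimum below.

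Set $J=[x_1,x_3]$ and $M=\max_{J}f$, attained by continuity. Since $M\geq f(x_2)>\max(f(x_1),f(x_3))$, the fiber set $S=f^{-1}(M)\cap J$ is contained in $(x_1,x_3)$. This is where the piecewise-monotone hypothesis enters: letting $K$ be a witnessing set (closed discrete in $I$), only finitely many components of $I\sm K$ meet $J$, and on each such monotone component the level set $\{f=M\}$ is empty, a single point, or a closed subinterval. Together with the finitely many points of $K$ inside $J$, this shows that $S$ is a finite union of pairwise disjoint closed intervals in $(x_1,x_3)$.

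Let $P=[a,b]$ be any one of these components; it is compact, contained in $(x_1,x_3)$, with $f\equiv M$ on $P$. Choose $\ep>0$ small enough that $P_\ep\sq(x_1,x_3)$ and $P_\ep\cap S=P$, which is possible because $S$ has only finitely many components. On $P_\ep\sm P$ we then have $f\leq M$ (since $P_\ep\sq J$) and $f\neq M$ (by the choice of $\ep$), hence $f<M$ strictly. This simultaneously shows that $P$ is a connected component of $f^{-1}(M)$ in all of $I$---so $P$ is a platform of $f$---and that $P$ is a turning platform, completing the proof. The main obstacle I foresee is the extraction lemma for the triple $x_1<x_2<x_3$: it is elementary but requires a little combinatorial care. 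Once in hand, the rest of the argument is a clean combination of the extreme value theorem with the structural fact that a monotone function has interval fibers.
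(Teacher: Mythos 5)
This proposition is quoted from \cite{Bu2019} (Proposition 2.1) and the present paper gives no proof of it, so there is nothing in the text to compare against; I will therefore only assess correctness. Your argument is correct. The forward direction is fine. For the converse, the one step you leave as a sketch --- extracting a strict peak or valley triple $x_1<x_2<x_3$ from the two witnesses to non-monotonicity --- is a true elementary fact: restrict $f$ to the at most four witness points and observe that a non-monotone function on a set of at most four points must already be non-monotone on some three of them (if $f(t_2)\not=f(t_3)$ the two overlapping triples would otherwise be monotone in the same direction; if $f(t_2)=f(t_3)$ use the triples $\{t_1,t_2,t_4\}$ and $\{t_1,t_3,t_4\}$), and a non-monotone triple is exactly a strict peak or valley. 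The rest is sound: $K\cap[x_1,x_3]$ is finite because a closed discrete subset of $I$ meets a compact subinterval in a finite set, so $S=f^{-1}(M)\cap[x_1,x_3]$ is a compact set with finitely many components, each a compact interval in $(x_1,x_3)$; the choice of $\ep$ with $P_\ep\cap S=P$ then simultaneously certifies that $P$ is a full connected component of the fiber $f^{-1}(M)$ in $I$ (hence a platform) and that $f<M$ on $P_\ep\sm P$ (hence a turning platform). No gaps.
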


Real-analytic functions are piecewise monotone on the real line. Proposition \ref{p:summarize} records the (standard) fact that more generally $C^1$ functions whose derivative has a closed discrete set of zeros are piecewise monotone and the fact that the nonexistence of flat points for a $C^n$ function is a sufficient condition to ensure that the zero set is closed discrete.

\begin{defn}[\cite{Bu2019}, Definition 2.4]\label{def:flat points}
With $n$ a nonnegative integer or $\infty$, we say that $p\in I$ is an \emph{$n$-flat point}, or just a \emph{flat point}, for a $C^n$ function $f\colon I\to\R$ if $D^kf(p)=0$, $0\leq k\leq n$, $k\in \Z$.
\end{defn}

Notice that a flat point for $f$ is also a flat point for each $D^kf$, $0\leq k\leq n$, $k\in\Z$.\footnote{When $n$ is finite, this sentence says  more precisely ``Notice that an $n$-flat point for $f$ is an $(n-k)$-flat point for $D^kf$, $0\leq k\leq n$, $k\in\Z$.''} Thus, if for some nonnegative integer $k\leq n$, $D^kf$ has no flat points, then none of the derivatives $D^if$, $i=0,\dots,k$, have flat points either.

\begin{prop}[\cite{Bu2019}, Propositions 2.5, 2.6, 2.7]\label{p:summarize}
Let $n$ be a nonnegative integer or $\infty$, $k$ a nonnegative integer with $k\leq n$. Let $I$ be a nontrivial interval of $\R$ and let $f\colon I\to\mathbb{R}$. Write $Z_i$ for the zero set of $D^if$ (when $D^if$ exists), and write $E_i$ for the set of turning points for $D^if$.
\begin{enumerate}
\item
If $f$ is a $C^n$ function and has no flat points, then $Z_0$ is closed discrete in $I$.

\item
If $f$ is a $C^1$ function and $Z_1$ is closed discrete, then $E_0$ is closed discrete in $I$ and $f$ is strictly monotone on the components of $I\sm E_0$.

\item
If $f$ is a $C^n$ function and $D^kf$ has no flat points, then $\bigcup_{i=0}^{k}Z_i$ is closed discrete in $I$ and, when $0\leq i<k$, $E_i$ is closed discrete in $I$ and $D^if$ is strictly monotone on the components of $I\sm E_i$.
\end{enumerate}
\end{prop}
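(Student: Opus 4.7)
My plan is to prove (1) by an iterated Rolle's theorem argument, deduce (2) from the observation that $E_0\sq Z_1$, and then obtain (3) by applying (1) and (2) to each derivative $D^if$ with $i\leq k$.

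For (1), the zero set $Z_0$ is closed in $I$ by continuity of $f$, so I need only show $Z_0$ has no limit point in $I$. Suppose for contradiction that $p\in I$ is such a limit point; then $f(p)=0$. Choose a strictly monotone sequence $x_j\in Z_0\sm\{p\}$ with $x_j\to p$. Rolle's theorem, applied to $f$ on the compact interval between $x_j$ and $x_{j+1}$, produces $y_j\in Z_1$ strictly between $x_j$ and $x_{j+1}$; the $y_j$ are distinct, tend to $p$, and avoid $p$, so $p$ is also a limit point of $Z_1$ and $Df(p)=0$ by continuity. Iterating this step $n$ times yields $D^if(p)=0$ for $i=0,\dots,n$, so $p$ is an $n$-flat point, contradicting the hypothesis.

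For (2), I would first check that $E_0\sq Z_1$: every platform of $f$ of positive length is contained in $Z_1$ (since $f$ is constant, hence $Df\equiv 0$, on the platform), and because $Z_1$ is closed discrete, every platform of $f$ is a singleton; in particular every turning point $p$ lies in $Z_1$. Hence $E_0\sq Z_1$ is closed discrete in $I$. For the monotonicity claim, fix a component $J$ of $I\sm E_0$. At any isolated zero $q\in Z_1\cap J$ of $Df$, $Df$ cannot change sign: otherwise $f$ would have a strict local extremum at $q$, making $\{q\}$ a turning platform and $q\in E_0$, contrary to $q\in J$. Thus $Df$ has a single sign on $J$ apart from isolated zeros, so $f$ is non-strictly monotone on $J$; since $f$ has no nondegenerate platforms, $f$ is strictly monotone on $J$.

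For (3), the observation preceding the proposition gives that $D^if$ has no flat points for each $0\leq i\leq k$. Applying (1) to $D^if$ (viewed as a $C^{n-i}$ function) yields that $Z_i$ is closed discrete in $I$; since a finite union of closed discrete subsets of $I$ is closed discrete, so is $\bigcup_{i=0}^{k}Z_i$. For $0\leq i<k$, the function $D^if$ is $C^1$ and its derivative $D^{i+1}f$ has closed discrete zero set $Z_{i+1}$, so (2) applied to $D^if$ gives that $E_i$ is closed discrete in $I$ and $D^if$ is strictly monotone on each component of $I\sm E_i$. The arguments are essentially routine; the only delicate point is the turning-platform argument in (2) showing that an isolated zero of $Df$ inside a component of $I\sm E_0$ cannot be a sign change.
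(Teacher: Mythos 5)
Your proof is correct. Note that the paper does not actually prove Proposition~\ref{p:summarize}; it is cited from \cite{Bu2019} (Propositions 2.5, 2.6, 2.7), so there is no in-paper argument to compare against. Your route is the natural one: (1) an iterated Rolle argument showing that an accumulation point of $Z_0$ would be a flat point (the $n=0$ case being trivial since there $Z_0=\e$); (2) observing that $E_0\sq Z_1$ because turning platforms lie in the interior of $I$ and must be singletons when $Z_1$ is closed discrete, and then the no-sign-change-at-an-interior-zero argument to get monotonicity, upgraded to strict monotonicity because all platforms of $f$ are singletons; (3) reduction to (1) and (2) applied to $D^if$, using the observation before the proposition that $D^if$ inherits the no-flat-points property. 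The only place you could tighten the exposition is the transition ``in particular every turning point $p$ lies in $Z_1$'': the singleton-platform fact is used to conclude that a turning point is a strict local extremum in the interior of $I$, and it is that, together with $C^1$, which forces $Df(p)=0$ -- worth spelling out. Everything else checks out, including the finite-union argument for closed discrete sets and the application of (2) to $D^if$ with $i<k$, where $D^if$ is $C^1$ since $i+1\leq n$.
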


Two functions $f,g\colon I\to\R$ are \emph{comonotone with witnessing set $K$} if they are both piecewise monotone with $K$ as a witnessing set, and $f$ and $g$ have the same monotonicity on each component of $I\sm K$.

\begin{rem}\label{r:comonotone}
(a) Because constant functions are monotone, comonotone functions need not be monotone on the same intervals. For example, the functions $f(x)=0$ and $g(x)=x^2$ on $\R$ are comonotone with witnessing set $\{0\}$, but $f$ is monotone on $\R$ whereas $g$ is not.

(b) Even when comonotone functions are monotone on the same intervals, they can be comonotone for some witnessing sets and not for others. For example, the functions $f,g\colon\R\to\R$ given by $f(x)=\max(x,0)$, $g(x)=\max(-x,0)$ are both monotone and hence every closed discrete set $K$ is a common witness to their piecewise monotonicity. However, $f$ and $g$ are comonotone using a closed discrete set $K$ as a witnessing set if and only if $0\in K$.
\end{rem}

The following proposition shows that when the turning platforms of $f$ are singletons, and $f$ is strictly monotone on the complementary intervals, the witnessing set for the comonotonicity of $f$ and another piecewise monotone function $g$ can be taken to be the set of turning points for $f$. Hence, in this case the mention of the witnessing set can be omitted, and we can speak unambiguously of a function being comonotone with $f$.

\begin{prop}[\cite{Bu2019}, Proposition 2.8]\label{p:witness.doesn't.matter}
Let $I$ be a nontrivial interval of $\R$ and let $f,g\colon I\to\R$ be continuous piecewise monotone functions. Suppose that the turning platforms of $f$ are singletons and set
\[
K_0=\{x\in I:x\ \text{is a turning point for}\ f\}.
\]
If $f$ is strictly monotone on the components of $I\sm K_0$, then for any common witnessing set $K$ to their piecewise monotonicity, $f$ and $g$ are comonotone with witnessing set $K$ if and only if they are comonotone with witnessing set $K_0$.
\end{prop}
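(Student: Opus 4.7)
The plan is to first show that $K_0\sq K$ for any common witnessing set $K$. A turning point $p\in K_0$ has singleton platform $\{p\}$, so on some punctured neighborhood $(p-\ep,p+\ep)\sm\{p\}$ the values of $f$ are all strictly above $f(p)$ or all strictly below; thus $f$ fails to be monotone in every neighborhood of $p$, forcing $p\in K$. With this containment in hand, the implication from $K_0$-comonotonicity to $K$-comonotonicity is immediate: every component $C$ of $I\sm K$ lies inside some component $C_0$ of $I\sm K_0$, and if $f,g$ share a monotonicity on $C_0$ they share it on $C$.

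For the converse, suppose $f$ and $g$ are comonotone with witness $K$, and let $C_0$ be a component of $I\sm K_0$. By hypothesis $f$ is strictly monotone on $C_0$, and by symmetry assume $f$ is strictly increasing. A preliminary observation is that every component $D$ of $C_0\sm K$ is also a component of $I\sm K$: the component $D'$ of $I\sm K$ containing $D$ is a connected subset of $I\sm K\sq I\sm K_0$, so it cannot meet the boundary of $C_0$ in $I$ (which is contained in $K_0\sq K$); hence $D'\sq C_0$, and maximality gives $D'=D$. Applying $K$-comonotonicity on $D$ and using that $f$ is strictly increasing there, we conclude that $g$ is increasing on each such $D$.

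To deduce that $g$ is increasing on all of $C_0$, pick $x<y$ in $C_0$. Since $K$ is closed discrete in $I$ and $[x,y]$ is compact, $K\cap[x,y]$ is finite; enumerate $\{x,y\}\cup(K\cap[x,y])$ in increasing order as $x=s_0<s_1<\dots<s_N=y$. Each open interval $(s_{j-1},s_j)$ lies in a single component of $C_0\sm K$, where $g$ is increasing, and continuity of $g$ extends this monotonicity to the closed interval $[s_{j-1},s_j]$; chaining $g(s_{j-1})\leq g(s_j)$ yields $g(x)\leq g(y)$. Thus $g$ is increasing on $C_0$, so $f,g$ are $K_0$-comonotone. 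The step requiring genuine care is this piecing-together argument, which combines the closed discreteness of $K$ with the continuity of $g$ to lift monotonicity from the individual $K$-components inside $C_0$ to all of $C_0$; the rest is bookkeeping using $K_0\sq K$.
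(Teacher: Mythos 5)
Your proof is correct. The paper only cites \cite{Bu2019} for this proposition without reproducing the argument, so there is no in-paper proof to compare against; your three-step argument is the natural one and handles the details carefully: showing $K_0\sq K$ (which makes the direction ``$K_0$-comonotone $\Rightarrow$ $K$-comonotone'' a matter of restricting to smaller components), observing that each component of $C_0\sm K$ is in fact a full component of $I\sm K$, and then chaining the monotonicity of $g$ across the finitely many subintervals of $[x,y]$ cut out by the closed discrete set $K$. In particular you correctly exploit the strict monotonicity of $f$ on $C_0$ to pin down the direction of $g$ on each piece $D$ (so that ``same monotonicity'' on $D$ really forces $g$ increasing rather than merely monotone), which is where the strictness hypothesis earns its keep. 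One small simplification: to see $D'\sq C_0$ you can skip the boundary argument and note directly that $D'$ is a connected subset of $I\sm K_0$ meeting the component $C_0$, hence lies in $C_0$ by maximality.
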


Proposition \ref{p:summarize} gave a simple assumption on a $C^1$ function under which the hypothesis of Proposition \ref{p:witness.doesn't.matter} is satisfied, namely the discreteness of the set of zeros for $Df$. Under this same assumption, we get a simple criterion for comonotonicity of a function $g$ with $f$.

\begin{prop}[\cite{Bu2019}, Proposition 2.9]\label{p:no.flat.points.comonotone}
Let $I$ be a nontrivial interval of $\R$ and let $f,g\colon I\to\mathbb{R}$ be $C^1$ functions.
If the zero set of $Df$ is a closed discrete set in $I$, and $Df$ and $Dg$ have the same sign everywhere on $I$, then $g$ is comonotone with $f$.
\end{prop}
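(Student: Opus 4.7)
The plan is to use the zero set $Z_1$ of $Df$ as a common witnessing set on which $f$ and $g$ are visibly comonotone, and then invoke Proposition \ref{p:witness.doesn't.matter} to transfer this comonotonicity to the canonical witnessing set consisting of the turning points of $f$.

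By hypothesis, $Z_1$ is closed discrete in $I$. Proposition \ref{p:summarize}(2) then tells us that the set $E_0$ of turning points of $f$ is closed discrete in $I$ and that $f$ is strictly monotone on each component of $I\sm E_0$. Every turning platform of $f$ must lie inside $E_0$ and therefore be a singleton, so the hypotheses of Proposition \ref{p:witness.doesn't.matter} are met by $f$.

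Next, on each component $J$ of $I\sm Z_1$, $Df$ is continuous and nowhere zero, so it has constant strict sign on $J$. By the sign hypothesis, $Dg$ has the same sign on $J$; in particular $Dg$ is everywhere $\geq 0$ on $J$ or everywhere $\leq 0$ on $J$, so $g$ is monotone on $J$ in the same direction as $f$. This shows that $Z_1$ is a common witnessing set to the piecewise monotonicity of $f$ and $g$, and that $f$ and $g$ are comonotone with this witnessing set. A direct application of Proposition \ref{p:witness.doesn't.matter} converts this into comonotonicity with the canonical witnessing set $E_0$, which is the unambiguous assertion stated.

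The argument is essentially bookkeeping among the preceding propositions; no serious obstacle appears. The only point to take care with is to recognize that $Z_1$, rather than $E_0$, is the natural common witnessing set to start from, and that the discreteness of $E_0$ forces the turning platforms of $f$ to be singletons so that Proposition \ref{p:witness.doesn't.matter} is applicable.
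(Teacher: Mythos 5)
Your argument is correct, and since the paper only cites this result from \cite{Bu2019} (it is stated right after Propositions \ref{p:summarize} and \ref{p:witness.doesn't.matter} precisely so that it follows from them), your route --- taking $Z_1$ as the common witnessing set, noting that discreteness of $E_0$ forces singleton turning platforms, and then transferring to $E_0$ via Proposition \ref{p:witness.doesn't.matter} --- is exactly the intended derivation. No gaps.
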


\begin{prop}\label{witnessing set}
Let $K$ be a witnessing set for a piecewise monotone continuous function $f$ on a nontrivial interval $I$.
\begin{enumerate}[\rm(a)]
\item
$K$ is $\sq$-minimal if and only if the points of $K$ are all turning points and $K$ has exactly one point on each turning platform.

\item
$K$ contains a $\sq$-minimal witnessing set.
\end{enumerate}
\end{prop}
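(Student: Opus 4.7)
The plan is to use Proposition \ref{p:not piecewise monotone} to reduce monotonicity of $f$ on a subinterval $C\sq I$ to the nonexistence of a turning platform of $f|_C$. My key observation is the following: for any open subinterval $C\sq I$ on which $f|_C$ is piecewise monotone, every turning platform $Q$ of $f|_C$ coincides with a turning platform $P$ of $f$ satisfying $P\sq C$. Indeed, $Q=P\cap C$ for some platform $P$ of $f$ (by maximality of platforms), and if $P$ crossed a boundary point of $C$ then $P\cap C$ would be half-open in $\R$ and hence not compact, contradicting that $Q$ is a turning platform of $f|_C$; so $P\sq C$ and $P=Q$, and the required sign conditions coincide.

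For the ``if'' direction of (a), suppose the hypotheses on $K$ hold and $K'\subsetneq K$ is witnessing. Pick $p\in K\sm K'$ with turning platform $P$. By the ``exactly one'' hypothesis, $K\cap P=\{p\}$, so $K'\cap P=\e$, placing $P$ inside a single component $C'$ of $I\sm K'$ (and $f|_{C'}$ is piecewise monotone with witnessing set $K\cap C'$). Since $K'$ is closed discrete, $P_\ep\sq C'$ for small $\ep$, so $P$ is a turning platform of $f|_{C'}$, contradicting monotonicity of $f|_{C'}$.

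For the ``only if'' direction I would argue contrapositively. First, $K$ being witnessing forces every turning platform $P$ of $f$ to meet $K$: otherwise $P$ lies in a component $C'$ of $I\sm K$, and the argument of the previous paragraph shows $f|_{C'}$ is not monotone. So if (B) fails, it is because some turning platform contains at least two points of $K$, and it suffices to exhibit, given either a non-turning point $p\in K$ or a turning platform $P$ with $|K\cap P|\geq 2$, a point $p\in K$ whose removal leaves a witnessing set. Let $L,R$ be the components of $I\sm K$ adjacent to $p$ and $C=L\cup\{p\}\cup R$; then $f|_C$ is piecewise monotone (with witnessing set $\{p\}$). By the key observation, it suffices to show no turning platform of $f$ lies in $C$. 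Monotonicity of $f$ on $L$ and on $R$ rules out turning platforms of $f$ inside $L$ or $R$ alone (the sign condition would contradict monotonicity), so any turning platform of $f$ in $C$ must contain $p$ and hence equals $p$'s platform. In the non-turning-point case, this platform is not turning by hypothesis. In the $|K\cap P|\geq 2$ case, choose $p$ to be the least element of $K\cap P$ and pick $p'\in K\cap P$ with $p'>p$; the next $K$-neighbor $p_R$ of $p$ satisfies $p<p_R\leq p'$, and $[p,p']\sq P$ gives $p_R\in P$, so $P$ extends past the boundary $p_R$ of $C$ and $P\not\sq C$. Either way, $f|_C$ has no turning platform and is monotone.

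For part (b), I would pick, for each turning platform $P$ of $f$, one point $p_P\in K\cap P$ (nonempty by the above) and set $K'=\{p_P:P\text{ turning platform}\}\sq K$. Then $K'$ is closed discrete as a subset of the closed discrete set $K$. For any component $C$ of $I\sm K'$, $K\cap C$ is a witnessing set for $f|_C$; any turning platform $Q$ of $f$ with $Q\sq C$ would have $p_Q\in K'\cap Q\sq K'\cap C=\e$, a contradiction. By the key observation, $f|_C$ has no turning platform and is monotone, so $K'$ is a witnessing set. Since $K'$ consists of turning points, one per turning platform, part (a) gives that $K'$ is $\sq$-minimal. The main technical care lies in establishing the key observation — in particular the half-open/compactness argument for platforms of $f$ crossing the boundary of $C$.
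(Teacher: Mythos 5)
Your proof is correct and rests on the same fundamental ingredients as the paper's: Proposition~\ref{p:not piecewise monotone} and the correspondence between turning platforms of a restriction $f|_C$ and turning platforms of $f$ contained in the interior of $C$ (your ``key observation,'' which the paper asserts in one sentence inside its Claim). Your parts (a)~``if'' and (b) are essentially the paper's argument: choose one point of $K$ on each turning platform to form $K'\subseteq K$ and show $K'$ is a witnessing set via that correspondence. Where you differ is in (a)~``only if.'' The paper (implicitly) obtains it at once from the Claim: if $K$ is $\subseteq$-minimal, then $K'\subseteq K$ being a witnessing set forces $K=K'$, which by construction has the stated form. You instead give a direct removability argument, locating a point $p\in K$ such that $K\setminus\{p\}$ still witnesses, by analyzing the merged component $C=L\cup\{p\}\cup R$. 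This is correct and more explicit about \emph{why} a redundant point exists, but it is substantially more work than the one-line derivation once you have (b), and it requires care you mostly but not entirely supply: the sub-case distinction between a non-turning $p$ and a turning platform with two or more $K$-points, the handling of $p$ at an endpoint of $I$ (where $L$ or $R$ may be absent), and verifying that a candidate turning platform $P$ cannot sit inside $L\cup\{p\}$ or $\{p\}\cup R$. One further stylistic point: the half-open/compactness argument in your key observation is valid but circuitous; it is cleaner to note that the sign condition ($f>c$ or $f<c$ on $Q_\varepsilon\setminus Q$) directly prevents the platform of $f$ containing $Q$ from extending past $Q$, and that $Q_\varepsilon\subseteq C\subseteq I$ then gives the turning condition for $f$ on $I$. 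This also dispenses with any worry about components $C$ that are only half-open at an endpoint of $I$.
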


\begin{proof}
$K$ must have at least one point on each turning platform since $f$ is not monotone in any neighborhood of a turning platform. Choose one point $x_P\in K\cap P$ for each turning platform $P$.

\m

\n \emph{Claim}. $K'=\{x_P:P$ is a turning platform$\}$ is also a witnessing set.

\m

To prove the claim, note that if $K'$ is not a witnessing set, then there is a component $J$ of $I\sm K'$ such that $f$ is not monotone on $J$. Then the restriction of $f$ to $J$ has a turning platform $P$ by Proposition \ref{p:not piecewise monotone}. $P$ is also a turning platform for $f$ on $I$, but then $x_P\in J\sq I\sm K'$, contradiction.

\m

(a) and (b) follow easily from the observation before the claim and the claim itself.
\end{proof}

\begin{prop}\label{p:platforms.of.derivatives}
Let $I$ be a nontrivial interval of $\R$, and let $f\colon I\to\R$ be a $C^1$ function. If $P$ is a platform of positive length for $f$, then $P$ is a also a platform for $Df$.
\end{prop}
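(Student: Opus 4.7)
The plan is to show two things: first that $Df$ takes a constant value on $P$ (namely zero), and second that $P$ is maximal among intervals on which $Df$ is constant.

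For the first step, since $P$ is a platform of $f$ with positive length, $P$ contains a nondegenerate subinterval on which $f$ is constant, so $Df \equiv 0$ on the interior of $P$. Because $f$ is $C^1$, $Df$ is continuous, and so $Df \equiv 0$ on the closure of the interior of $P$ relative to $I$; since $P$ is an interval, this closure is $P$ itself. Thus $Df$ is constant (with value $0$) on $P$.

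For the second step (maximality), let $Q$ be the platform of $Df$ containing $P$ (choose any point of $P$ and take its equivalence class for $Df$), so $Df$ is constant on $Q$ and $Q \supseteq P$. The constant value must be $0$, since $Df = 0$ somewhere on $P \sq Q$. But then $f' \equiv 0$ on the interval $Q$, which is a standard consequence-of-the-mean-value-theorem situation forcing $f$ to be constant on $Q$. Since $f$ already has the value $f|_P = c$ at points of $P \sq Q$, we conclude $f \equiv c$ on $Q$. Maximality of $P$ as a platform of $f$ now forces $Q = P$, which is exactly what we need.

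There is no serious obstacle here; the only point requiring a moment of care is that "platform of positive length" is needed precisely to get a nondegenerate subinterval on which the derivative vanishes (a single-point platform of $f$ would carry no such information about $Df$). The argument is a clean two-step: \emph{constancy} of $Df$ on $P$ from $C^1$ regularity and continuity, followed by \emph{maximality} via the mean value theorem and the assumed maximality of $P$ as a platform of $f$.
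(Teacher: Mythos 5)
Your proof is correct. Both you and the paper first observe that $Df\equiv 0$ on $P$, so $P$ is contained in a platform $Q$ of $Df$, and then must show $Q\sq P$; the difference lies in how that reverse inclusion is obtained. The paper works locally at the endpoints of $P$: for points $x$ just outside $P$ we have $f(x)\not=f(a)$ (by maximality of $P$ as a platform of $f$), so the mean value theorem produces points $c_x$ arbitrarily close to $a$ with $Df(c_x)\not=0$, which excludes them from $Q$. You instead argue globally on $Q$: the constant value of $Df$ on $Q$ must be $0$, so $f$ is constant on $Q$ by the mean value theorem, and the maximality of $P$ as a platform of $f$ then gives $Q\sq P$. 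The two arguments use the same two ingredients (the mean value theorem and the maximality built into the definition of a platform) in opposite orders; yours is arguably slightly cleaner since it avoids the case analysis over whether $\min P$ and $\max P$ exist and are interior to $I$. There is no gap.
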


\begin{proof}
On $P$ we have $Df=0$, so $P$ is at least contained in a platform $Q$ for $Df$. If $a=\min P$ exists in $I$ and $I\cap(-\infty,a)\not=\e$, then arbitrarily close to $a$ we have points $x<a$ where $f(x)\not=f(a)$ and hence there is a point $c_x\in(x,a)$ where $Df(c_x)=(f(x)-f(a))/(x-a)\not=0$ and hence $c_x$ is not on the same platform of $Df$ as $a$. Thus, $Q$ has no points in $(-\infty,a)$. Similarly, it has no points above $\max P$ if it exists. Thus, $P=Q$ is a platform of $Df$.
\end{proof}

\begin{cor}\label{c:platforms.of.derivatives}
Let $n$ be a nonnegative integer. Let $I$ be a nontrivial interval of $\R$, and let $f\colon I\to\R$ be a $C^n$ function. Let $p\in I$. Write $P_k$ for the platform of $D^kf$ containing $p$. Then $P_0\sq\dots\sq P_n$. More specifically, for some initial segment $A\sq\{0,\dots,n\}$ and for some closed interval $P$ of $I$ of positive length, we have for $k=0,\dots,n$ that $P_k=\{p\}$ for $k\in A$ and $P_k=P$ for $k\not\in A$.
\end{cor}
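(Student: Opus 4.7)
The plan is to prove the chain $P_0\sq P_1\sq\dots\sq P_n$ first, and then use Proposition \ref{p:platforms.of.derivatives} to establish the dichotomy: once some $P_k$ becomes larger than $\{p\}$, it stabilizes and all later $P_j$ coincide with it.

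For the containment, I would fix $k\in\{0,\dots,n-1\}$, pick $x\in P_k$, and let $J$ be the closed interval from $p$ to $x$. By definition $D^kf$ is constant on $J$, so $D^{k+1}f\equiv0$ on the interior of $J$; since $k+1\leq n$, $D^{k+1}f$ is continuous and hence identically zero on all of $J$. In particular $D^{k+1}f$ is constant on $J$, so $J$ lies in the platform $P_{k+1}$ of $D^{k+1}f$ through $p$, proving $P_k\sq P_{k+1}$.

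Next, suppose $k\in\{0,\dots,n-1\}$ and $P_k$ has positive length. Then $D^kf$ is a $C^1$ function on $I$ having $P_k$ as a platform of positive length, so Proposition \ref{p:platforms.of.derivatives} applied to $D^kf$ yields that $P_k$ is a platform of $D^{k+1}f$ as well. Since $p\in P_k$ and platforms are equivalence classes, this forces $P_k=P_{k+1}$. Define
\[
A=\{k\in\{0,\dots,n\}:P_k=\{p\}\}.
\]
The containment $P_k\sq P_{k+1}$ together with $p\in P_k$ shows that $P_{k+1}=\{p\}$ implies $P_k=\{p\}$, so $A$ is downward closed, i.e., an initial segment of $\{0,\dots,n\}$. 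If $A=\{0,\dots,n\}$ the second clause is vacuous. Otherwise, letting $m=\min(\{0,\dots,n\}\sm A)$, the previous paragraph gives $P_m=P_{m+1}=\cdots=P_n$ by induction, and setting $P=P_m$ yields the required closed interval of positive length.

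I don't expect a real obstacle here: the argument is essentially bookkeeping on top of the two earlier observations. The only subtle point is ensuring that vanishing of $D^{k+1}f$ on the open interior of $J$ upgrades to vanishing on the closed interval $J$ (so that one genuinely gets containment in a platform, not just equality of values at the endpoints), which is taken care of by the continuity of $D^{k+1}f$ afforded by $f\in C^n$.
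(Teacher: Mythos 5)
Your proof is correct and follows the route the paper clearly intends, namely deducing everything from Proposition~\ref{p:platforms.of.derivatives}. The only comment is that your first paragraph is slightly redundant: once you know (from the proposition) that $P_k$ of positive length forces $P_k = P_{k+1}$, and that $P_k=\{p\}\sq P_{k+1}$ is trivial because $p\in P_{k+1}$, the chain of containments already falls out of the case analysis in your second paragraph, so there is no need for the separate continuity argument (which, as written, also implicitly assumes $x\not=p$ so that $J$ has nonempty interior).
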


Here $A$ can take the extreme values $A=\e$, $A=\{0,\dots,n\}$. In the latter case $P$ is irrelevant since the corollary says nothing about it.

\begin{prop}\label{p:piecewise.monotone.characterization}
Let $I$ be a nontrivial interval of $\R$, and let $f\colon I\to\R$ be continuous and piecewise monotone. 
\begin{enumerate}
\item
The collection of the turning platforms of $f$ is discrete in $I$.

\item
The collection of the platforms of points in the zero set $Z_f$ of $f$ is discrete in $I$.
\end{enumerate}
\end{prop}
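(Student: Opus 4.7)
The plan is to pass to a minimal witnessing set $K$ for $f$, whose existence is guaranteed by Proposition \ref{witnessing set}(b). Combining with part (a) of that proposition, $K$ is closed discrete in $I$, every point of $K$ is a turning point, and $K$ meets each turning platform in exactly one point, which I denote $y_P$. Platforms being equivalence classes are pairwise disjoint closed intervals. For each part, I will first show local finiteness --- each $p\in I$ has a neighborhood meeting only finitely many platforms of the relevant type --- and then upgrade to ``at most one'' by removing the finitely many extraneous (closed) platforms from the neighborhood.

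For (1), fix $p \in I$ and choose an open $U \sq I$ around $p$ with $U \cap K$ finite; this is possible since $K$ is closed discrete. A turning platform $Q$ meeting $U$ either satisfies $y_Q \in U$ (at most $|U \cap K|$ such), or satisfies $y_Q \notin U$, in which case $Q$ is a closed interval containing $y_Q$ outside $U$ and a point of $U$, so $Q$ must contain an endpoint of $U$; by disjointness of platforms this second case accounts for at most two additional turning platforms. Hence only finitely many turning platforms meet $U$.

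For (2), the analogous local finiteness is that zero platforms --- platforms $P$ on which $f\equiv 0$ --- do not accumulate at any $x^* \in I$. Supposing distinct zero platforms $P_n$ contain points $q_n \to x^*$, continuity gives $f(x^*)=0$, so $x^*$ lies in a zero platform $P^* = [\alpha^*, \beta^*]$; after discarding the at most one $n$ with $P_n = P^*$ and passing to a subsequence (by symmetry), I may take $q_n < x^*$. If $\alpha^* < x^*$, then eventually $q_n \in [\alpha^*, x^*) \sq P^*$, contradicting $P_n \cap P^* = \e$, so $x^* = \alpha^*$. If $x^* \in K$, then by minimality of $K$ the platform $P^*$ is a turning platform, admitting $\ep>0$ with $f \ne 0$ on $P^*_\ep \sm P^*$; this contradicts $f(q_n)=0$ once $q_n > \alpha^* - \ep$. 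If $x^* \notin K$, then a small interval around $x^*$ lies in a component $J$ of $I \sm K$ on which $f$ is monotone, so the level set $\{f=0\}\cap J$ is an interval containing $x^*$ and the eventual $q_n$, which therefore lie in the platform $P^*$, again contradicting $P_n \ne P^*$.

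The main obstacle is handling platforms whose $K$-representative is outside the chosen neighborhood; the saving observation is that such a platform, being a closed interval, must contain an endpoint of the neighborhood, and disjointness of platforms limits the count. A secondary delicate point, used in (2), is that a monotone function has interval level sets, which forces the eventual $q_n$ into $P^*$ when $x^* \notin K$.
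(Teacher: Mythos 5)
Your proof is correct, but it takes a genuinely different route from the paper's, which never invokes Proposition \ref{witnessing set}. For (1), the paper fixes an arbitrary witnessing set $K$, observes that an accumulation of turning platforms at $p$ would eventually trap an entire turning platform inside the interior of a single component $J$ of $I\sm K$, and concludes that $f$ cannot be monotone on $J$. You instead pass to a minimal witnessing set so that each turning platform is tagged by its unique point of $K$, yielding the explicit bound $|U\cap K|+2$ on the number of turning platforms meeting a neighborhood $U$ --- a cleaner, quantitative statement, at the small cost of needing $U$ to be an interval (otherwise ``$Q$ must contain an endpoint of $U$'' is not literally meaningful; you should say this). For (2), the paper reuses the same trapping device: two distinct zero platforms inside $J^\circ$ force $f$ to vanish, become nonzero between them, and vanish again, contradicting monotonicity on $J$ --- a two-line argument. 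Your version instead localizes the accumulation point $x^*$, shows it must be an endpoint of its own platform, and splits on whether $x^*\in K$ (using the definition of turning platform) or $x^*\notin K$ (using convexity of level sets of monotone functions); this is correct but noticeably longer and more case-driven. Both arguments ultimately rest on the same two facts --- local finiteness of $K$ and monotonicity of $f$ on the components of $I\sm K$ --- so the difference is in packaging: yours buys an explicit count and a self-contained case analysis, the paper's buys brevity and a single mechanism serving both parts.
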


Here, `discrete' means `has no accumulation points'.

\begin{proof}
Let $K$ be a closed discrete set in $I$ which is a witness to the piecewise monotonicity of $f$.

(1) If the turning platforms accumulate at a point $p$, we can find a sequence of them $\{P_n\}$ converging monotonically to $p$, say increasing to $p$. Then $p$ is not the minimum element of $I$ and there is a component $J$ of $I\sm K$ such that either $p\in J$ or $p$ is the right endpoint of $J$. But then for some large enough $n$, $P_n\sq J^\circ$ (the interior of $J$ in $\R$), and hence $f$ is not monotone on $J$, a contradiction since $K$ is a witness to the piecewise monotonicity of $f$.

(2) If the platforms of points in $Z_f$ accumulate at a point $p$, then by the argument in (1), we can find two distinct ones $P_m$ and $P_n$ both contained in $J^\circ$ for some component $J$ of $I\sm K$. But then $f$ is not monotone on $J$ since it is nonzero at some of the points between $P_m$ and $P_n$.
\end{proof}

\begin{prop}\label{p:non.zero.point}
Let $I$ be a nontrivial interval of $\R$, and let $f\colon I\to\R$ be a $C^n$ function. If $p\in I$ and $D^nf(p)\not=0$, then for any $\ep>0$, there is a point $q\in I$ such that $|q-p|<\ep$ and $D^j(q)\not=0$, $j=0,\dots,n$.
\end{prop}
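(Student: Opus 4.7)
The plan is to use the continuity of $D^nf$ to localize to a small neighborhood of $p$ where $D^nf$ is nonzero, and then argue by downward induction that the zero sets of the lower-order derivatives inside this neighborhood are all finite, which lets us pick a point $q$ avoiding all of them.

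More concretely, I would first use the continuity of $D^nf$ and the hypothesis $D^nf(p)\neq 0$ to choose $\delta>0$ with $\delta<\ep$ such that $U:=(p-\delta,p+\delta)\cap I$ is a nondegenerate interval on which $D^nf$ is nowhere zero. Let $Z_k:=\{x\in U:D^kf(x)=0\}$. By construction $Z_n=\e$. I claim that if $Z_{k+1}$ is finite, then so is $Z_k$. Indeed, with $Z_{k+1}\subseteq U$ finite, $U\sm Z_{k+1}$ consists of finitely many open subintervals. On each such subinterval $J$, the continuous function $D^{k+1}f$ is nonzero, hence by the intermediate value theorem it has constant sign on $J$. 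Therefore $D^kf$, which is $C^1$ on $U$ with derivative $D^{k+1}f$, is strictly monotone on $J$ and has at most one zero in $J$. Since there are only finitely many such intervals $J$, and $Z_k$ is contained in $Z_{k+1}$ together with the zeros of $D^kf$ in the various $J$'s, $Z_k$ is finite as well.

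Iterating downwards from $k=n-1$ to $k=0$, all the sets $Z_0,Z_1,\dots,Z_n$ are finite, so their union is a finite subset of the nondegenerate interval $U$. Any point $q$ in $U$ outside this finite set satisfies $|q-p|<\delta<\ep$ and $D^jf(q)\neq 0$ for every $j=0,\dots,n$, completing the proof.

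There is no serious obstacle; the only subtlety is to remember that the zeros of $D^kf$ at the (finitely many) points of $Z_{k+1}$ themselves must be counted separately from those lying in the monotonicity intervals, but this changes the bound only by a finite additive term and does not affect finiteness. One could alternatively invoke Proposition~\ref{p:summarize}(2) applied to $D^kf$ on $U$ to conclude that the turning points of $D^kf$ form a closed discrete set and $D^kf$ is strictly monotone between them, but the direct inductive argument above seems cleanest.
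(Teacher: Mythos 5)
Your proof is correct. It differs from the paper's in mechanism though not in spirit: both arguments are downward inductions from $j=n$ to $j=0$ driven by the observation that nonvanishing of $D^{j+1}f$ on an interval constrains $D^jf$ there. The paper keeps shrinking the interval: having found a nonvoid open interval $I_{j+1}\subseteq(p-\ep,p+\ep)\cap I$ on which $D^{j+1}f\neq 0$, it notes that $D^jf$ is then not constant on $I_{j+1}$, hence nonzero at some point, hence (by continuity) nonzero on a nonvoid open subinterval $I_j$, and finally takes $q\in I_0$. You instead fix a single neighborhood $U$ of $p$ and show that the zero sets $Z_n,\dots,Z_0$ of the successive derivatives within $U$ are all finite, using constant sign of $D^{k+1}f$ and hence strict monotonicity of $D^kf$ on each of the finitely many components of $U\setminus Z_{k+1}$. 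Your route costs slightly more machinery (intermediate value theorem for constant sign, strict monotonicity, counting zeros per component) but returns a stronger conclusion, namely that all but finitely many points of $U$ can serve as $q$; the paper's route is leaner, needing only that a function with nonvanishing derivative is nonconstant and therefore nonzero somewhere. Both arguments are complete.
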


\begin{proof}
Recursively get nonempty open intervals of $\R$, $I_{j}$ for $j=0,\dots,n$, so that $(p-\ep,p+\ep)\cap I\supseteq I_n\supseteq I_{n-1}\supseteq \dots\supseteq I_0$, and $D^jf(x)\not=0$ for all $x\in I_j$. Since $D^nf$ is continuous, there is a nonvoid open interval $I_n\sq I\cap (p-\ep,p+\ep)$ such that $D^nf\not=0$ on $I_n$. Given $I_{j+1}$ with $D^{j+1}f\not=0$ on $I_{j+1}$, we have that $D^{j}f$ is not constant on $I_{j+1}$, so there is a point of $I_{j+1}$ where $D^{j}f$ is nonzero, and then by continuity of $D^{j}f$ there is a nonvoid open interval $I_{j}\sq I_{j+1}$ such that $D^{j}f\not=0$ on $I_{j}$. Let $q$ be any element of $I_0$.
\end{proof}

\begin{prop}[\cite{Bu2019}, Proposition 2.10] \label{p:f.not.piecewise.monotone.implies.Df.also}
Let $I$ be a nontrivial interval of $\R$, and let $f\colon I\to\R$ be a $C^1$ function. If $Df$ is piecewise monotone, then so is $f$. Hence, when $f$ is $C^n$, the set of $k=0,\dots,n$ such that $D^kf$ is piecewise monotone is an initial segment of $\{0,\dots,n\}$.
\end{prop}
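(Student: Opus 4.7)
My plan is to refine a witnessing set for $Df$ into one for $f$. Fix $K\sq I$ closed discrete such that $Df$ is monotone on each component of $I\sm K$, and let $J$ be such a component. Because $f$ is $C^1$, $Df$ is continuous on $J$. If $Df$ does not change sign on $J$, then $f$ is already monotone on $J$. Otherwise the intermediate value theorem produces some $a_J\in J$ with $Df(a_J)=0$; since $Df$ actually takes both positive and negative values on $J$, such an $a_J$ lies strictly in the interior of $J$, and monotonicity of $Df$ then forces $Df$ to have constant sign on each of the two components of $J\sm\{a_J\}$. Hence $f$ is monotone on each of those two components.

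Set $K'=K\cup\{a_J:J\text{ is a component of }I\sm K\text{ on which }Df\text{ changes sign}\}$. Granted that $K'$ is closed discrete in $I$, the previous paragraph shows that $f$ is monotone on each component of $I\sm K'$, so $K'$ is a witnessing set for $f$. To verify closed-discreteness, suppose toward contradiction that some $p\in I$ is an accumulation point of $K'$. Since $K$ is itself closed discrete, $p$ must be accumulated by the added points; as each component $J$ contributes at most one such point, we obtain pairwise distinct components $J_n$ with $a_{J_n}\to p$. For cofinitely many $n$, $p\notin J_n$ (otherwise cofinitely many $a_{J_n}$ would lie in a single fixed component, contradicting distinctness), and then some endpoint $e_n$ of $J_n$ lies between $a_{J_n}$ and $p$, so $e_n\to p$. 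Each $e_n$ either belongs to $K$ or equals an endpoint of the interval $I$; since $I$ has at most two endpoints and each endpoint of $I$ is an endpoint of at most one of the distinct components $J_n$, only finitely many $e_n$ fall in the latter class. Therefore infinitely many $e_n$ lie in $K$ and accumulate at $p$, contradicting that $K$ is closed discrete.

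For the second assertion, suppose $1\leq k\leq n$ and $D^kf$ is piecewise monotone. Applying the first part to the $C^1$ function $D^{k-1}f$, whose derivative is $D^kf$, shows $D^{k-1}f$ is piecewise monotone. A downward induction on $k$ then yields piecewise monotonicity of $D^jf$ for all $0\leq j\leq k$, so the set of such $k$ is an initial segment of $\{0,\dots,n\}$.

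The main obstacle I anticipate is the closed-discreteness check for $K'$: a priori the newly inserted critical points $a_J$ might accumulate at some $p\notin K$, even though $K$ itself does not. Ruling this out requires the endpoint-squeezing argument above, which converts each nearby $a_J$ into a nearby endpoint of a distinct component of $I\sm K$ and thereby transports the accumulation into $K$.
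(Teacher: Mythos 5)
Your proof is correct and is the natural argument; the paper itself only cites this result from \cite{Bu2019} without reproducing a proof, and your route --- adding to a witnessing set $K$ for $Df$ one sign-change point $a_J$ per component $J$ of $I\sm K$, then checking that the enlarged set is still closed discrete by pushing any hypothetical accumulation of the $a_J$'s onto endpoints of distinct components and hence into $K$ --- is exactly the expected one. The only cosmetic point is that ``$Df$ has constant sign on each of the two components of $J\sm\{a_J\}$'' should read ``constant weak sign'' ($Df\geq 0$ on one side and $\leq 0$ on the other), which is all that is needed for the monotonicity of $f$ there.
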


In \cite{Bu2019} the following two comonotone approximation and interpolation theorems were obtained. The first deals with functions having no flat points.

\begin{thm}[\cite{Bu2019}, Theorem A]\label{t:piecewise.monotone.interpolation}
Let $n$ be a nonnegative integer, $m$ an integer or $\infty$, with $n\leq m$. Let $U_0\sq U_1\sq\dots$ be a cover of $\R$ by open sets. Suppose $f\colon \mathbb{R}\to\mathbb{R}$ is a $C^m$ function such that $D^nf$ has no flat points.
For any closed discrete set $T$, and for any positive continuous function $\varepsilon\colon\R\to\R$, there is an entire function $g$ such that $g(\R)\sq\R$ and the following hold.
\begin{enumerate}
\item\label{i:spmi:1}
$|D^ig(x) - D^if(x)|<\varepsilon(x)$, $x\in\R\sm U_i$, $0\leq i\leq m$, $i\in\Z$.

\item\label{i:spmi:2}
$D^ig(x) = D^if(x)$, $x\in T\sm U_i$, $0\leq i\leq m$, $i\in\Z$.

\item\label{i:spmi:3}
$D^kg(x)$ has the same sign as $D^kf(x)$, $x\in \R$, $k=0,\dots,n$.
\end{enumerate}
\end{thm}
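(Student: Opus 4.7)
The hypothesis that $D^nf$ has no flat points, together with Proposition~\ref{p:summarize}(3), ensures that $Z=\bigcup_{k=0}^{n} Z_k$ (where $Z_k$ is the zero set of $D^kf$) is closed discrete in $\R$. My plan is to invoke Hoischen's Theorem~\ref{Hoischen}, in a multi-scale form compatible with the cover $\{U_i\}$, applied to the enlarged closed discrete interpolation set $T'=T\cup Z$ with a positive continuous tolerance $\varepsilon'\leq\varepsilon/2$ still to be specified. This immediately delivers (\ref{i:spmi:1}) and (\ref{i:spmi:2}), since $T\sq T'$, so it remains only to secure the sign condition (\ref{i:spmi:3}).

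Fix $k\in\{0,\dots,n\}$. At every point $x\in Z_k\sq Z\sq T'$ at which the interpolation holds, both $D^kg(x)$ and $D^kf(x)$ vanish, so the signs match by default. At $x\notin Z_k$ one has $D^kf(x)\neq 0$ and it suffices to arrange $\varepsilon'(x)<|D^kf(x)|$. Globally away from $Z$ this is routine, since $\min_{k\leq n}|D^kf|$ is bounded below there by a positive continuous function. The delicate regions are punctured neighborhoods of points $p\in Z_k$, where $|D^kf(x)|\to 0$. Here the no-flat-points hypothesis produces a minimal $j=j(p,k)\in\{k,\dots,n\}$ with $D^jf(p)\neq 0$, so near $p$ one has $D^kf(x)=\frac{D^jf(p)}{(j-k)!}(x-p)^{j-k}+o\bigl((x-p)^{j-k}\bigr)$ with $j-k\leq n-k\leq m-k$. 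On the other hand, the interpolation $D^ig(p)=D^if(p)$ for all $i\leq m$ forces $D^k(g-f)$ to vanish at $p$ to order $m-k+1$, and Taylor's theorem with integral remainder yields $|D^kg(x)-D^kf(x)|\leq\frac{|x-p|^{m-k}}{(m-k)!}\sup|D^mg-D^mf|$ on the segment from $p$ to $x$. Since $m-k\geq j-k$, the error is at most of the same order as the leading term of $D^kf$ near $p$ and has strictly smaller magnitude on a fixed compact neighborhood $V_p$ of $p$, provided $\varepsilon'$ is bounded above there by a suitable positive constant depending on $D^jf(p)$.

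The tolerance $\varepsilon'$ is then built by a standard patching: choose the $V_p$ pairwise disjoint (possible by closed discreteness of $Z$), and take $\varepsilon'$ to be any positive continuous function dominated by $\varepsilon/2$, by the local constant dictated by the Taylor argument on each $V_p$, and by $\tfrac12\min_{k\leq n}|D^kf|$ on $\R\sm\bigcup_{p\in Z}V_p$. The main technical obstacle I anticipate is the interaction with the exception sets $U_i$: the conclusion of Hoischen's theorem as formulated guarantees $D^ig=D^if$ only on $T'\sm U_i$ and bounds $|D^mg-D^mf|$ only off $U_m$, whereas the sign condition (\ref{i:spmi:3}) is demanded on all of $\R$, including possible points of $Z_k\cap U_k$ and Taylor segments crossing $U_m$. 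Circumventing this requires either invoking a sharper Hoischen-type result whose interpolation is honored on the entire closed discrete set $T'$ and whose $D^m$ bound extends to a neighborhood of $Z$, or treating the points of $Z$ lying in the exception sets separately by continuity from nearby interpolation points in $\R\sm U_k$; both routes should be tractable given the closed discreteness of $Z$, but demand careful bookkeeping.
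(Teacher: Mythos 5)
This theorem is imported verbatim from \cite{Bu2019} (Theorem A); the present paper contains no proof of it, so I can only assess your argument on its own terms. Your skeleton --- enlarge the interpolation set to $T\cup Z$ with $Z=\bigcup_{k\le n}Z_k$ closed discrete, then force \ref{i:spmi:3} by making the tolerance small relative to the first nonvanishing derivative at each point of $Z$ via a Taylor estimate --- is the right idea. But the engine you invoke, a ``multi-scale form'' of Hoischen's theorem compatible with the cover $\{U_i\}$, is not Theorem \ref{Hoischen}: that theorem controls only finitely many derivatives and controls them everywhere, and it cannot be applied when $m=\infty$ (one cannot in general keep \emph{all} derivatives of an entire function uniformly close to those of an arbitrary $C^\infty$ function on all of $\R$; the graded exception sets $U_i$ are exactly what makes the statement possible). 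Conclusions \ref{i:spmi:1}--\ref{i:spmi:2} alone, i.e., the graded interpolation theorem, are the bulk of Theorem A and must be proven or properly cited, not folded into ``invoke Hoischen.''

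Second, your claim that the minimal $j=j(p,k)$ with $D^jf(p)\ne0$ lies in $\{k,\dots,n\}$ is false: ``$D^nf$ has no flat points'' only guarantees some $j$ with $n\le j\le m$ and $D^jf(p)\ne0$ (take $f(x)=x^{n+1}$, $p=0$, $k=0$: then $j=n+1$). This is not cosmetic. When $j>n$, comparing $D^k(g-f)$ with $D^kf\sim c(x-p)^{j-k}$ requires interpolation at $p$ and control of $D^i(g-f)$ near $p$ for $i$ up to $j$, possibly up to $m$ --- precisely the range where the graded theorem gives control only off $U_i$; if one uses only vanishing to order $n$ and a bound on $D^n(g-f)$, the error near $p$ is of order $|x-p|^{n-k}\varepsilon'(p)$, which dominates $|x-p|^{j-k}$ and does not determine the sign. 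The same difficulty afflicts \ref{i:spmi:3} on $U_k\sm Z$ for $k\le n$, where you have no bound on $|D^kg-D^kf|$ at all. These are exactly the obstacles you name in your closing paragraph and defer as ``careful bookkeeping.'' They are repairable --- replace the cover by $U_i'\sq U_i$ with $U_i'=\e$ for $i\le n$ and with $U_i'$ avoiding a fixed compact neighborhood of each $p\in Z$ for all $i\le j(p)$, which is possible while keeping $U_0'\sq U_1'\sq\dots$ an increasing open cover because $Z$ is closed discrete and each $j(p)\le m$ is finite, and the conclusions for the shrunk cover imply those for the original --- but since securing \ref{i:spmi:3} is the only content of the theorem beyond the graded interpolation result, leaving this step unexecuted leaves the proof incomplete.
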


Since $g$ is analytic, it follows from \ref{i:spmi:3} that each $D^kf$, $k=0,\dots,n-1$, is piecewise monotone and comonotone with $D^kg$ using the union of the zero sets of $D^jf$, $j=0,\dots,n$, as the witnessing set. (Cf.\ Propositions \ref{p:summarize} and \ref{p:no.flat.points.comonotone}.)

The second theorem, Theorem D of \cite{Bu2019} stated here as Theorem \ref{t:C.infty.Hoischen.C.infty.fcts}, allows flat points, but gives comonotone approximation only for $f$, not for its derivatives. In this theorem and also in Theorem \ref{t:C.n.Hoischen.C.n.fcts}, we use the following notation regarding the platforms of a continuous function $f\colon\R \to\R$. If $f$ has a rightmost platform, we denote it $P^f_{\rm max\phantom{i}}$, or just $P_{\max}$. Otherwise we set $P_{\max}=\e$.
If $f$ has a leftmost platform, we denote it $P^f_{\min}$, or just $P_{\min}$. Otherwise we set $P_{\min}=\e$. $P_{\min}$ and $P_{\max}$ coincide when $f$ is constant and otherwise are disjoint.
For a $C^n$ function $f\colon\R\to\R$, we write for simplicity
\[
P^{n}_{\min} = P^{D^nf}_{\min},\ \ P^{n}_{\max} = P^{D^nf}_{\rm max\phantom{i}}.
\]
In Theorems \ref{t:C.infty.Hoischen.C.infty.fcts} and \ref{t:C.n.Hoischen.C.n.fcts}, there is a given closed discrete set $E$. We let $e_{\min}=\inf E$, $e_{\max}=\sup E$, so that $e_{\min}\in E$ if and only if $E$ has a least element (and $e_{\min}\in \{\infty,-\infty\}$ otherwise), and $e_{\max}\in E$ if and only if $E$ has a largest element (and $e_{\max}\in \{\infty,-\infty\}$ otherwise).
Let $W_n$ be the union of the open left and right half-lines determined by $e_{\min}$ and $e_{\max}$, respectively, when these belong to $P^n_{\min}$ and $P^n_{\max}$, respectively:
\begin{equation}
W_n = \{x\in\R:\ e_{\min}\in P^n_{\min}\ \text{\rm and}\ x<e_{\min},\ \text{\rm or}\ e_{\max}\in P^n_{\max}\ \text{\rm and}\ x>e_{\max}\}.
\end{equation}

The statement of Theorem D in \cite{Bu2019} is for $C^n$ functions where we allow $n=\infty$. In this paper we deal only with the case where $n$ is finite, so we restrict the statement to that case.

\begin{thm}[\cite{Bu2019}, Theorem D]\label{t:C.infty.Hoischen.C.infty.fcts}
Let $n$ be a nonnegative integer. Let $f\colon \R\to\R$ be a nonconstant piecewise monotone $C^n$ function, and let $E\sq \R$ be a closed discrete set which has no more than one point on each platform of $f$ and has exactly one point on each turning platform. Let $\ep\colon \R\to\R$ be a positive continuous function and let $\ep_0>0$.
Then there is a function $g\colon\R\to\R$ which is the restriction to $\R$ of an entire function and satisfies the following conditions.
\begin{enumerate}
\item\label{i:t:C.infty.Hoischen:5}
$D^jf(x)=D^jg(x)$ when $x\in E$, $0\leq j\leq n$.

\item\label{i:t:C.infty.Hoischen:1}
$Dg(x)\not=0$ when $x\notin E$ and, if $n=0$, also when $x\in E$ is not a turning point of $f$.

\item\label{i:t:C.infty.Hoischen:2}
$|g(x)-f(x)|<\ep_0$ for all $x\in \R$.

\item\label{i:t:C.infty.Hoischen:3}
$|g(x)-f(x)|<\ep(x)$ when $x\notin W_0$.

\item\label{i:t:C.infty.Hoischen:4}
$|D^jg(x)-D^jf(x)|<\ep(x)$ when $x\in \R$, $1\leq j\leq n$.
\end{enumerate}
\end{thm}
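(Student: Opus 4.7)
The strategy is to reduce, through a smooth intermediate function $\tilde f$, to a careful application of Hoischen's theorem. Index the points of $E$ in order as $\ldots<e_{k-1}<e_k<e_{k+1}<\ldots$. The hypothesis that $E$ has exactly one point on each turning platform forces each $[e_k,e_{k+1}]$ to contain no turning platform in its interior, so $f$ is monotone on $[e_k,e_{k+1}]$ with some sign $s_k\in\{\pm 1\}$; and the hypothesis that $E$ has at most one point on each platform forces $f$ to be nonconstant on $[e_k,e_{k+1}]$.

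First I would construct a $C^\infty$ function $\tilde f\colon\R\to\R$ satisfying (i) $D^j\tilde f=D^jf$ on $E$ for $0\le j\le n$, (ii) $|D^j\tilde f-D^jf|$ small enough not to disturb the eventual approximation bounds (3), (4), (5), and (iii) $D\tilde f$ of strict sign $s_k$ on each open interval $(e_k,e_{k+1})$, vanishing on the closed interval $[e_k,e_{k+1}]$ only at turning endpoints. Such $\tilde f$ is built locally on each $[e_k,e_{k+1}]$ as $\tilde f-f=\int_{e_k}^{x}(\alpha-\beta)$, where $\alpha,\beta\ge 0$ are small smooth functions with $\alpha$ supported near the zero set of $Df$ in the interior, $\beta$ supported where $Df$ is bounded away from $0$, and $\int_{e_k}^{e_{k+1}}(\alpha-\beta)=0$ together with analogous conditions up to order $n-1$ to ensure that $\tilde f$ matches the Hermite data of $f$ at $e_{k+1}$. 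The supports are chosen so that $Df+\alpha-\beta$ is strictly of sign $s_k$ throughout the open interval.

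Next I would apply Hoischen's theorem (Theorem~\ref{Hoischen}) to $\tilde f$ with interpolation set $E$, obtaining an entire $g$ with $D^jg=D^j\tilde f$ on $E$ for $0\le j\le n$ and $|D^jg-D^j\tilde f|$ controlled by a tolerance $\eta\colon\R\to\R$ chosen smaller than both the desired error and $|D\tilde f|$ on each compact set disjoint from the turning points. To arrange that $Dg$ changes sign correctly across each turning point $e$, one prescribes additional jet data at such $e$ (an admissible refinement of Hoischen): namely, $D^{n+1}\tilde f(e)$ is chosen so that, after transfer to $g$, the zero of $Dg$ at $e$ has odd order. Once this is set, on compact subsets of $(e_k,e_{k+1})$ away from turning endpoints $Dg$ inherits the sign of $D\tilde f$ via the tolerance bound; near a turning endpoint, $Dg$ is entire with an isolated zero of finite odd order at $e$, hence nonvanishing on a punctured neighborhood. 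Together these give $Dg\ne 0$ off $E$ as required by (2); conclusions (1), (3), (4), (5) then follow from the Hoischen bounds and the $C^n$-smallness of $\tilde f-f$.

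The principal obstacle is reconciling the two mechanisms that control the sign of $Dg$. The analytic control $\eta\ll|D\tilde f|$ collapses at turning endpoints, where $|D\tilde f|\to 0$, so there the sign of $Dg$ must instead be controlled by the parity of its order of vanishing---an entire-function phenomenon---and arranging this parity is what requires the extra jet data at turning points. The same dichotomy explains the restriction to $x\notin W_0$ in (4): on the half-lines making up $W_0$, the target $f$ is constant, but an entire function $g$ with $Dg\not\equiv 0$ cannot be pointwise arbitrarily close to a constant, so only the uniform bound $\ep_0$ in (3) is achievable there.
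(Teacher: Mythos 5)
This statement is quoted from \cite{Bu2019} (Theorem D), so the present paper contains no proof of it to compare against directly; judged against the machinery the paper actually uses for results of this type (the proof of Theorem \ref{t:C.n.Hoischen.C.n.fcts}), your architecture --- a smooth intermediate interpolant $\tilde f$ followed by an entire approximation interpolating on $E$ --- is the right one, but two steps do not close. First, the function $\tilde f$ you build by adding $\int(\alpha-\beta)$ to $f$ is only $C^n$ (and for $n=0$ the construction does not even parse, since $Df$ need not exist), yet you then want to prescribe $D^{n+1}\tilde f(e)$ at turning points and transfer it to $g$, which requires applying Hoischen to order at least $n+1$ and hence an intermediate function of class at least $C^{n+1}$. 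Producing a $C^{n+1}$ (in the paper, $C^\infty$ or $C^{n+2}$) function that matches the $n$-jets of $f$ on $E$, approximates $f$ in the $C^n$ sense, has $D\tilde f$ of the correct strict sign off $E$, \emph{and} carries freely prescribable higher jets at the points of $E$ is the hard content of the theorem; bump-function corrections to $Df$ do not supply it.

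Second, and more fundamentally, plain Hoischen (Theorem \ref{Hoischen}) does not yield clause \ref{i:t:C.infty.Hoischen:1}. Your tolerance bound $\eta<|D\tilde f|$ controls the sign of $Dg$ only where that inequality holds, and since $\eta$ is positive continuous while $D\tilde f(e)=0$ at a turning point $e$, the inequality necessarily fails on a fixed neighborhood of $e$. The observation that an isolated odd-order zero of the entire function $Dg$ at $e$ makes $Dg$ nonvanishing on \emph{some} punctured neighborhood gives a neighborhood whose size depends on $g$ itself and need not reach the region where the tolerance argument takes over, so zeros of $Dg$ strictly between $e$ and that region are not excluded. The paper's route closes exactly this gap: one builds the intermediate function $g_0$ so that $Dg_0$ has \emph{no flat points} (at every zero of $Dg_0$ a prescribed higher derivative is nonzero --- note that the parity of the zero of $Dg$ at $e$ is constrained by the forced data $D^jg(e)=D^jf(e)$, $j\leq n$, so arranging it requires prescribing both $D^{n+1}$ and $D^{n+2}$ there, not just $D^{n+1}$), and then invokes Theorem \ref{t:piecewise.monotone.interpolation} (Theorem A of \cite{Bu2019}), which strengthens Hoischen precisely by guaranteeing that $D^kg$ has the same sign as $D^kg_0$ \emph{everywhere}. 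Your proposal neither arranges a no-flat-points condition nor proves such a sign-preserving approximation theorem, so clause \ref{i:t:C.infty.Hoischen:1} is not established. (A small further point: your explanation of the restriction $x\notin W_0$ in \ref{i:t:C.infty.Hoischen:3} is off --- an entire function can be within $\ep(x)$ of a constant on a half-line while having nonvanishing derivative there; the obstruction is rather that $g$ must equal the constant value at the point of $E$ on $P_{\max}$ and then be strictly monotone beyond it.)
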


As long as $\ep(x)$ is small enough, these properties force $g$ to be comonotone with $f$ with $E$ as witnessing set. To see this, note that on a component $I$ of the complement of $E$, $g$ is monotone by \ref{i:t:C.infty.Hoischen:1}, and $f$ is monotone since $E$ has a point on each turning platform. That $g$ and $f$ have the same monotonicity on $I$ is clear if $f$ is constant, and otherwise it follows from the condition $|g(x)-f(x)|<\ep(x)$ in \ref{i:t:C.infty.Hoischen:3} as long as $\ep(x)$ is taken small enough.

In this paper, we prove the following theorems.
The first one provides an approximation to a polynomial on a half-line having spedified derivatives of order $\geq n$ at the endpoint.

\begin{thm*}[\ref{t:approx.a.poly.on.platform}]
Let $f$ be a polynomial with $D^nf$ having constant value $k$, for some nonnegative integer $n$. Let $r,s\in\{-1,1\}$. Let $a\in\R$, $I=[a,\infty)$.
Let $\ep(x)$ be a positive continuous function on $rI$. Fix a positive number $b \leq \ep(ra)$, with $b<\ep(ra)$ unless $\ep(x)=\ep_0$ is constant. Let $\beta_j$, $j=0,1,\dots$ be a sequence of real numbers such that $\beta_0=k-sr b$ and either $\beta_j=0$ for all $j>0$, or for the least $j>0$ with $\beta_j\not=0$ we have $sr^{j+1}\beta_j > 0$.
Then there is a $C^\infty$ function $g\colon rI\to\R$ such that
\begin{enumerate}
\item
$s D^{n+1}g(x)>0$, $x\in rI$, $x\not=ra$.

\item
$D^{n+j}g(ra)=\beta_j$, $j=0,1,\dots$

\item
$|D^jg(x)-D^jf(x)|<\ep(x)$, $x\in rI$, $x\not=ra$, $j=0,\dots,n$.
\end{enumerate}
Now assume that $\lim_{x\to r\infty}\ep(x)=0$. From {\rm(1)} and {\rm(3)} we get {\rm(4)} and {\rm(5)} for $j=0,\dots,n$.
\begin{enumerate}
\setcounter{enumi}{3}
\item
$s (-r)^{n+j+1}D^{j}g(x) > s(-r)^{n+j+1}D^{j}f(x)$, for all $x\in rI$.

\item
If $1\leq j\leq n$, we have $D^{j-1}g(ra)-D^{j-1}f(ra) = -r\int_{rI} (D^{j}g-D^{j}f)$.
\end{enumerate}
If $D^jf$, $j=0,\dots,n$, are all monotone on $rI$, and those which are not constant are nonzero at $ra$, then we can ask moreover that
\begin{enumerate}
\setcounter{enumi}{5}
\item
$D^jg(x)\not=0$, $x\in rI$, $j=1,\dots,n$.
\end{enumerate}
\end{thm*}

The second theorem also provides an approximation to a polynomial $f$ on a half-line whose derivatives at the endpoint agree with those of $f$ up to order $n$ and have specified values for orders larger than $n$. The approximation is only for the $n$th derivative, but when the derivatives up to order $n$ for $f$ are all monotone, the derivatives of order up to $n$ for the approximating function have the same monotonicity as those of $f$.

\begin{thm*}[\ref{t:approx.a.poly.on.platform}]
Let $f$ be a polynomial with $D^nf$ having constant value $k$, for some nonnegative integer $n$. Let $r,s\in\{-1,1\}$. Let $a\in\R$, $I=[a,\infty)$.
Let $\ep>0$. Let $\beta_j$, $j=1,2,\dots$ be a sequence of real numbers such either $\beta_j=0$ for all $j=1,2,\dots$, or for the least $j$ with $\beta_j\not=0$ we have $sr^{j+1}\beta_j > 0$.
Then there is a $C^\infty$ function $h\colon rI\to\R$ such that
\begin{enumerate}
\item\label{c.1}
$s D^{n+1}h(x)>0$, $x\in rI$, $x\not=ra$.

\item\label{c.2}
$D^{n+j}h(ra)=\beta_j$, $j=1,2,\dots$

\item\label{c.3}
$D^jh(ra) = D^jf(ra)$, $j=0,\dots,n$.

\item\label{c.4}
$|D^nh(x)-D^nf(x)| = |D^nh(x)-k|<\ep$, $x\in rI$.
\end{enumerate}
If $D^jf$ is monotone on $rI$ for each $j=0,\dots,n$ then there is a choice of $s\in\{1,-1\}$ such that for any $C^\infty$ function $h\colon rI\to\R$ satisfying \ref{c.1} and \ref{c.3} also satisfies for each $j=0,\dots,n$,
\begin{enumerate}
\setcounter{enumi}{4}
\item\label{c.5}
$D^{j+1}h(x)\not=0$ for all $x\in rI\sm\{ra\}$.

\item\label{c.6}
$D^jh$ has the same monotonicity as $D^jf$ on $rI$.
\end{enumerate}
\end{thm*}

The next theorem improves Theorem \ref{t:C.infty.Hoischen.C.infty.fcts} by providing comonotonicity of $D^jg$ with $D^jf$ for all $j=0,\dots,n$. It assumes a statement $(Q_n)$ which is discussed in the next section. This statement was shown in \cite{BHM2025} to hold for $n\leq 3$. (In \cite{BHM2025} it was called $(P_n)$, but we call it $(Q_n)$ here to avoid confusion with our notation for platforms.)

\begin{thm*}[\ref{t:C.n.Hoischen.C.n.fcts}]
Assume $(Q_n)$. Let $n$ be a nonnegative integer. Let $f\colon \R\to\R$ be a $C^n$ function such that $D^nf$ is piecewise monotone and nonconstant. Let $E\sq \R$ be a closed discrete set which has no more than one point on any platform of $D^nf$, and has a point on each turning platform of $D^jf$, $j=0,\dots,n$. Let $\ep\colon \R\to\R$ be a positive continuous function and let $\ep_0>0$.
Then there is a function $g\colon \R\to\R$ which is the restriction of an entire function and satisfies the following conditions for $x\in \R$.
\begin{enumerate}
\item\label{p:C.n.Hoischen:5}
$D^jg(x)=D^jf(x)$ when $x\in E$, $j=0,\dots,n$.

\item\label{p:C.n.Hoischen:1a}
$D^jg(x)\not=0$ when $x\not\in E$, $1\leq j\leq n+1$.

\item\label{p:C.n.Hoischen:2}
$D^{n+1}g(x)\not=0$ when $x\in E\sm(P^n_{\min}\cup P^n_{\max})$ and $x$ not a turning point of $D^nf$

\item\label{p:C.n.Hoischen:3}
$|D^ng(x)-D^nf(x)|<\ep_0$.

\item\label{p:C.n.Hoischen:4}
$|D^jg(x)-D^jf(x)|<\ep(x)$, when $x\notin W_n$, $j=0,\dots,n$.

\item\label{p:C.n.Hoischen:6}
$D^jg$ is comonotone with $D^jf$, with $E$ as witnessing set, $j=0,\dots,n$.
\end{enumerate}
\end{thm*}

In this paper,
the one-sided derivatives of a function defined on a nontrivial interval at endpoints of that interval will be denoted by the same notation as the two-sided derivatives, leaving it for the context to distinguish the two.

We record for later reference the following simple observations.

\begin{prop}\label{p:monotone.der.sign}
Let $n$ be a nonnegative integer, and let $f$ be a polynomial whose derivatives $D^jf$, $j=0,\dots,n$, are all monotone on a nontrivial interval $I$.
\begin{enumerate}
\item
Those derivatives $D^jf$, $j=0,\dots,n$, which are not constant are strictly monotone on $I$ and those derivatives $D^jf$, $j=1,\dots,n$, which are not identically zero have no zeros on $I$ except possibly at endpoints of $I$.

\item
If $I=rJ$, where $J=[a,\infty)$ and $r\in\{1,-1\}$, $f$ is nonzero of degree $m\leq n$, and the coefficient of the term of highest degree $m$ in $f$ has sign $t\in\{1,-1\}$, then for $j=1,\dots,m$, $D^jf$ has constant sign $r^{m-j}t$ on $I\sm\{ra\}$.
\end{enumerate}
\end{prop}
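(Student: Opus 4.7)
The plan is to use two elementary facts about polynomials: a monotone non-constant polynomial is automatically \emph{strictly} monotone on any nontrivial interval (because a polynomial constant on a subinterval is constant throughout $\R$), and the sign of a polynomial on a half-line is controlled at the infinite end by its leading term.

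For part (1), first note that if $D^jf$ is monotone and non-constant then, being a polynomial that is not constant on any subinterval of $I$, its monotonicity must be strict. For the zero-set claim, fix $j\in\{1,\dots,n\}$ with $D^jf$ not identically zero. If $D^jf$ is a nonzero constant then it has no zeros, so assume $D^jf$ is a non-constant polynomial; by the preceding sentence it is strictly monotone on $I$, so its zero set consists of at most one point. Suppose this unique zero $c$ were interior to $I$. Then $D^jf$ would have opposite strict signs on the two components of $I\sm\{c\}$, forcing $D^{j-1}f$ to be strictly decreasing on one component and strictly increasing on the other. This contradicts the assumed monotonicity of $D^{j-1}f$ on $I$ (note that $j-1\in\{0,\dots,n\}$), so $c$ must be an endpoint of $I$.

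For part (2), $D^mf$ is the nonzero constant $m!\,a_m$, where $a_m$ is the leading coefficient, and its sign is $t=r^{m-m}t$. For $j\in\{1,\dots,m-1\}$, $D^jf$ is a nonzero polynomial of degree $m-j\geq 1$ which is monotone on $I$ by hypothesis (since $j\leq m\leq n$); part (1) then yields that $D^jf$ has no zero on $I\sm\{ra\}$. Since $I\sm\{ra\}$ is connected, $D^jf$ is of constant sign there, and that sign can be read off at the infinite end of $rI$: the leading coefficient of $D^jf$ has sign $t$ and $x^{m-j}$ has sign $r^{m-j}$ near the infinite end of $rI$, so the common sign of $D^jf$ on $I\sm\{ra\}$ is $r^{m-j}t$.

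The only point requiring care is the case split in part (1) among the possibilities that $D^jf$ is constant, identically zero, or non-constant; once these are sorted out, the sign-change argument is forced and there is no serious obstacle.
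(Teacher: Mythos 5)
Your proof is correct and follows essentially the same route as the paper's: strict monotonicity from the fact that a polynomial constant on a subinterval is constant everywhere, the interior-zero/sign-change contradiction with the monotonicity of $D^{j-1}f$, and reading off the sign from the leading term at the infinite end of the half-line. (One trivial slip: in part (2) you twice write ``the infinite end of $rI$'' where you mean the infinite end of $I=rJ$, which is $r\infty$; the sign computation you perform is the correct one for $I$.)
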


\begin{proof}
(1) This is easy if $n=0$ and vacuous if $f=0$, so assume $n\geq 1$ and $f$ is nonzero with degree $m$. $D^mf$ is a nonzero constant, The other nonzero derivatives $D^jf$, $j=0,\dots,n$, are those for which  $0\leq j\leq \min(m-1,n)$. They are strictly monotone on $I$ since $f$ is a polynomial. For $1\leq j\leq \min(m-1,n)$, $D^jf$ cannot have a zero except at $a$, because otherwise it changes sign on $I$ and hence $D^{j-1}f$ is not monotone on $I$.

(2) The derivatives $D^jf$, $j=1,\dots,m$, all have their coefficient of the term of highest degree of sign $t$. When $|x|$ is large, the sign of $D^jf(x)$ is $r^{m-j}t$, so, since the sign is constant by (1), $D^jf(x)$ has sign $r^{m-j}t$ for all $x\in I\sm\{ra\}$.
\end{proof}

\section{The statement $(Q_n)$}
\label{s:Final values of increasing $C^n$ functions and their derivatives}

In this section, we explain the statement $(Q_n)$ mentioned in the introduction and indicate its usefulness for obtaining approximations.
We first introduce some notation.

For $a=(a_0,\dots,a_n)$ and $b=(b_0,\dots,b_n)$ in $\R^{n+1}$, let $(a;b)$ denote the concatenation $(a;b) = (a_0,\dots,a_n,b_0,\dots,b_n)\in \R^{2(n+1)}$.
We also write this tuple as $(a_j\,;\,b_j)$ when $n$ is clear from the context.
Let $c<d$ be real numbers. Let $\mathcal{S}$ denote the set of infinite sequences $\al=(\al_0,\al_1,\al_2,\dots)$ of real numbers, and let $x=(x_0,\dots,x_n)\in\R^{n+1}$. For $s\in\{1,-1\}$, we make the following definitions.

\begin{align*}
\F_n^s[c,d] & = \{f\in C^n[c,d]: sD^nf\ \text{is increasing but not constant}\}, \\
\F_n^{s,\infty}[c,d] & = \{f\in C^\infty[c,d]: sD^{n+1}f(x)>0\ \text{for all}\ x\in (c,d)\}, \\
\A_n^s(x) & = \{\al\in\mathcal{S}: \al_j=x_j,\, j=0,\dots,n,\ \text{and either}\ \al_j=0\ \text{for all}\ j>n, \\
& \rule{2.02cm}{0cm}\text{or for the least $j>n$ such that $\al_j\not=0$, we have $s\al_j>0$}\}, \\
\B_n^s(x) & = \{\beta\in\mathcal{S}: \beta_j=x_j,\, j=0,\dots,n,\ \text{and either}\ \beta_j=0\ \text{for all}\ j>n, \\
& \rule{2.02cm}{0cm}\text{or for the least $j>n$ such that $\beta_j\not=0$, we have $s(-1)^{n+j+1}\beta_j>0$}\}, \\
V_n^s[c,d] & = \{(a;b): a,b\in\R^{n+1}\ \text{and there is an $f\in \F_n^s[c,d]$} \\
& \rule{1.9cm}{0cm}\text{such that $D^jf(c)=a_j$ and $D^jf(d)=b_j$ for all $j=0,\dots,n$}\}, \\
V_n^{s,\infty}[c,d] & = \{(a;b): a,b\in\R^{n+1}\ \text{and for all $\al\in\A_n^s(a)$, $\beta\in\B_n^s(b)$ there is an $f\in \F_n^{s,\infty}[c,d]$} \\
& \rule{1.9cm}{0cm}\text{such that $D^jf(c)=\al_j$ and $D^jf(d)=\beta_j$ for all $j=0,1,2,\dots$}\}.
\end{align*}

When $s=1$ we omit it in the notation, writing $\F_n[c,d]$, $\F_n^\infty[c,d]$, $\A_n(x)$, and so on.
Note that for $s\in\{1,-1\}$,
\begin{itemize}
\item
$\F_n^{s}[c,d] = s\F_n[c,d]$ and $\F_n^{s,\infty}[c,d] = s\F_n^{\infty}[c,d]$,

\item
$\A_n^{s}(x) = s\A_n(sx)$ and $\B_n^{s}(x) = s\B_n(sx)$,

\item
$V_n^{s}[c,d] = sV_n[c,d]$ and $V_n^{s,\infty}[c,d] = sV_n^{\infty}[c,d]$.
\end{itemize}

The following remark explains the restrictions on the sequences in definitions of $\A^s_n(x)$ and $\B^s_n(x)$.

\begin{rem}\label{r:signs}
If $f\in\F_n^{s,\infty}[c,d]$ has $D^jf(c)=\al_j$ and $D^jf(d)=\beta_j$ for all nonnegative integers $j$, then the least $j>n$ for which $\al_j$ is nonzero, if there is one, satisfies $s\al_j>0$, and the least $j>n$ for which $\beta_j$ is nonzero, if there is one, satisfies that $s\beta_j$ is positive if $n+j$ is odd and negative if $n+j$ is even (\cite{BHM2025}, Remark 4.1).
\end{rem}

The next remark will not be used in the sequel.

\begin{rem}\label{r:cones}
(a) The families $\F_n^{s}[c,d]$ and $s\F_n^{\infty}[c,d]$
are both convex cones (in $C^n[c,d]$ and in $C^\infty[c,d]$, respectively) in the sense of \cite[\S 27]{Be1974}, i.e., they are closed under taking linear combinations with positive coefficients.

(b) $V_n^s[c,d]$ and $V_n^{s,\infty}[c,d]$ are convex cones in $\R^{2(n+1)}$.
(Proof.
We may assume $s=1$. $V_n[c,d]$ is easily seen to be closed under positive scaling and under sums, by scaling or adding the witnessing functions, respectively. For $V_n^{\infty}[c,d]$ we can proceed as follows.

For closure under positive scaling, suppose $(a;b)\in V_n^{\infty}[c,d]$ and $\lb>0$. Given $\al\in\A_n(\lb a)$, $\beta\in\B_n(\lb b)$, we have $\lb^{-1}\al\in\A_n(a)$, $\lb^{-1}\beta\in\B_n(b)$, so there is an $f\in \F_n^{\infty}[c,d]$ such that $D^jf(c)=\lb^{-1}\al_j$ and $D^jf(d)=\lb^{-1}\beta_j$ for all $j=0,1,2,\dots$. Then $\lb f\in \F_n^{\infty}[c,d]$ and $D^j(\lb f)(c)=\al_j$, $D^j(\lb f)(d)=\beta_j$ for all $j=0,1,2,\dots$, showing that $(\lb a;\lb b)\in V_n^{\infty}[c,d]$.

For closure under sums, suppose $(a^i;b^i)\in V_n^{\infty}[c,d]$, $i=1,2$. Given $\al\in\A_n(a^1+a^2)$, $\beta\in\B_n(b^1+b^2)$, write $\al=\al^1+\al^2$, where $\al^i_j=a^i_j$, $j=0,\dots,n$, $i=1,2$, and $\al^1_j=\al_j$, $\al^2_j=0$, $j>n$. Similarly, write $\beta=\beta^1+\beta^2$, where $\beta^i_j=b^i_j$, $j=0,\dots,n$, $i=1,2$, and $\beta^1_j=\beta_j$, $\beta^2_j=0$, $j>n$.
Then $\al^i\in\A_n(a^i)$, $\beta^i\in\B_n(b^i)$, $i=1,2$, so there is are $f_i\in \F_n^{\infty}[c,d]$ such that $D^jf_i(c)=\al^i_j$ and $D^jf_i(d)=\beta^i_j$ for all $j=0,1,2,\dots$. Then $f=f_1+f_2\in \F^{\infty}_n[c,d]$ and $D^j(f)(c)=\al_j$, $D^j(f)(d)=\beta_j$ for all $j=0,1,2,\dots$, showing that $(a^1;b^1)+(a^2;b^2)\in V_n^{\infty}[c,d]$.)
\end{rem}

We state for emphasis the following simple but useful fact.

\begin{rem}\label{r:V_n}
If $(a;b)\in V_n^s[c,d]$ then $sa_n < sb_n$. (Given a witnessing function $f\in \F_n^s[c,d]$, we have $sa_n=sD^nf(c) < sD^nf(d)=sb_n$, where the strict inequality holds since $sD^nf$ is increasing but not constant.)
\end{rem}

We are now ready to state $(Q_n)$. We also define a weaker statement $(Q_n^-)$.

\begin{defn}
$(Q_n)$ is the statement that for all real numbers $c<d$ and $s\in\{1,-1\}$,
\[
V_n^s[c,d] = V_n^{s,\infty}[c,d]\ \text{and}\ V_n^s[c,d]\ \text{is open in $\R^{2(n+1)}$}.
\]
$(Q_n^-)$ is the statement that for all real numbers $c<d$ and $s\in\{1,-1\}$,
\[
V_n^s[c,d] = V_n^{s,\infty}[c,d].
\]
\end{defn}

\begin{conj}[\cite{BHM2025}]
$(Q_n)$ is true for all nonnegative integers $n$.
\end{conj}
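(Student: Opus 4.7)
The plan is to decompose $(Q_n)$ into its two components: the equality $V_n^s[c,d] = V_n^{s,\infty}[c,d]$ and the openness of $V_n^s[c,d]$ in $\R^{2(n+1)}$. The inclusion $V_n^{s,\infty}[c,d] \sq V_n^s[c,d]$ is immediate since $\F_n^{s,\infty}[c,d] \sq \F_n^s[c,d]$. Openness would follow from the equality: given $(a;b) \in V_n^s[c,d]$, applying the equality with the zero extensions $\al = (a_0,\dots,a_n,0,0,\dots)$, $\beta = (b_0,\dots,b_n,0,0,\dots)$, which lie in $\A_n^s(a), \B_n^s(b)$ respectively, yields $f^* \in \F_n^{s,\infty}[c,d]$ with $sD^{n+1}f^* > 0$ on $(c,d)$. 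For a nearby jet $(a';b')$, a Hermite-interpolating polynomial $p$ of degree $\leq 2n+1$ solving $D^jp(c) = a'_j - a_j$, $D^jp(d) = b'_j - b_j$ for $j = 0,\dots,n$ has $\|p\|_{C^{n+1}}$ tending to $0$, so $sD^{n+1}(f^* + p) > 0$ persists and $f^* + p \in \F_n^{s,\infty}[c,d] \sq \F_n^s[c,d]$ witnesses $(a';b') \in V_n^s[c,d]$.

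For the substantive direction $V_n^s \sq V_n^{s,\infty}$, assume $s = 1$. Let $f \in \F_n[c,d]$ witness $(a;b) \in V_n[c,d]$ and fix target extensions $\al \in \A_n(a), \beta \in \B_n(b)$. Setting $\varphi = D^{n+1}f^*$, the task reduces to constructing $\varphi \in C^\infty[c,d]$ with (i) $\varphi > 0$ on $(c,d)$; (ii) endpoint jets $D^k\varphi(c) = \al_{n+1+k}$, $D^k\varphi(d) = \beta_{n+1+k}$ for $k \geq 0$, where the sign conventions in $\A_n, \B_n$ are designed to be locally consistent with (i); and (iii) the $n+1$ Taylor moment constraints
\[
\int_c^d \frac{(d-t)^{n-j}}{(n-j)!}\varphi(t)\,dt = b_j - \sum_{k=j}^{n}\frac{(d-c)^{k-j}}{(k-j)!}a_k, \quad j = 0,\dots,n,
\]
obtained by repeated integration by parts from the jets of $f^*$. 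The witness $f$ certifies that the right-hand sides are realized by at least one weakly increasing but non-smooth candidate, namely the Stieltjes measure $d(D^nf)$.

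The construction of $\varphi$ proceeds in two steps. First, a Borel--Whitney construction at $c$ and $d$, glued to a strictly positive smooth function on the interior, produces $\varphi_0 \in C^\infty[c,d]$ with the prescribed endpoint jets and $\varphi_0 > 0$ on $(c,d)$. Second, correct $\varphi_0 \to \varphi_0 + h$, where $h \in C^\infty[c,d]$ is infinitely flat at both endpoints, to enforce the $n+1$ moment equations; the moment operator restricted to this space of flat perturbations is surjective onto $\R^{n+1}$ by the nondegeneracy of the Gram matrix of the kernels $(d-t)^{n-j}/(n-j)!$ against a suitable finite family of interior bump functions. The principal obstacle, and where the analysis in \cite{BHM2025} is confined to $n \leq 3$, is maintaining $\varphi_0 + h > 0$ on $(c,d)$: the required $h$ could dominate $\varphi_0$ pointwise. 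My plan is to scale $\varphi_0$ upward so that its interior lower bound dominates $\|h\|_\infty$, choosing the correction basis with well-separated supports to keep the Gram matrix well-conditioned; a complementary approach exploits the convex-cone structure of $V_n^{\infty}$ (Remark~\ref{r:cones}(b)), first mollifying $d(D^nf)$ (which solves (iii) exactly but (i) only weakly) and then forming a convex combination with $\varphi_0$ and tuning the coefficient so that both (i) and (iii) hold simultaneously. The hard part is controlling the Gram-matrix conditioning uniformly in $n$: the size of the moment-correction multipliers must not grow faster than the interior positivity margin one can arrange for $\varphi_0$, and it is exactly this quantitative interplay, which becomes delicate for the tight sign constraints in $\B_n$ at high $n$, that the conjecture hinges on.
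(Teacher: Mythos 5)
This statement is labelled a \emph{conjecture} in the paper; it is open, and the paper only invokes \cite{BHM2025} for the cases $n\leq 3$. There is therefore no proof here to compare yours against, and what you have written is, as you yourself concede at the end, a plan whose central step is unresolved rather than a proof. Two concrete issues. First, in the openness argument you build $f^*$ from the \emph{zero} extensions, so $sD^{n+1}f^*(c)=s\al_{n+1}=0$ and $sD^{n+1}f^*(d)=0$; then $sD^{n+1}f^*$ is not bounded below by a positive constant on $(c,d)$, and adding a Hermite polynomial $p$ of degree $\leq 2n+1$ with small $C^{n+1}$ norm does not preserve $sD^{n+1}(f^*+p)>0$ (nor even the monotonicity of $sD^n(f^*+p)$) near the endpoints, because $D^{n+1}p$ is generically nonzero there. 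This particular gap is repairable: take instead $\al_{n+1}=\beta_{n+1}=s$, which are admissible in $\A_n^s(a)$ and $\B_n^s(b)$, so that $sD^{n+1}f^*$ is continuous and positive on the compact interval and the perturbation argument goes through. So openness does follow from the equality, but not by the route you wrote.

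The essential gap is in the main direction. Your reduction to finding $\varphi=D^{n+1}f^*$ smooth, positive on $(c,d)$, with prescribed endpoint jets and the $n+1$ stated moment identities, is correct; and the surjectivity of the moment map on flat perturbations is also correct, so some correction $h$ always exists. The entire difficulty is positivity of $\varphi_0+h$, and neither of your proposed fixes addresses it. Scaling $\varphi_0$ by $\lambda$ destroys the prescribed (generally nonzero) endpoint jets and, more fatally, multiplies the moments of $\varphi_0$ by $\lambda$ while the target moments are fixed by $(a;b)$; hence the moment discrepancy, and with it the required $\|h\|_\infty$, grows linearly in $\lambda$, so the positivity margin and the size of the correction scale together and nothing is gained. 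The convex-combination variant has the same defect: mollifying $d(D^nf)$ perturbs both the moments and the endpoint jets, and a single interpolation parameter cannot restore $n+1$ moment equations plus the jet conditions. The underlying obstruction is that the moment vector of a positive density of total mass $b_n-a_n$ ranges over a convex body; the witness $f\in\F_n^s[c,d]$ only shows the target lies in the closure of that body (it may be extremal, with the mass of $d(D^nf)$ concentrated at the endpoints), and whether such a target is attainable by a density that is simultaneously smooth, strictly positive on the interior, and compatible with the sign-constrained jets of $\A_n^s$ and $\B_n^s$ is exactly what $(Q_n)$ asserts. Your sketch restates this difficulty in the language of Gram-matrix conditioning; it does not resolve it.
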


The following was established in \cite{BHM2025}.

\begin{thm}[\cite{BHM2025}, Theorem A]
$(Q_n)$ holds for $n\leq 3$.
\end{thm}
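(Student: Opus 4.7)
The plan is to fix $s$ — we may assume $s=1$ by the symmetry identities listed after the definitions — and prove the two assertions of $(Q_n)$ separately. The inclusion $V_n^\infty[c,d]\sq V_n[c,d]$ is immediate: applying the defining property of $V_n^\infty$ to the trivial tail sequences $\al_j=\beta_j=0$ for $j>n$ produces $f\in \F_n^\infty[c,d]$, and $D^{n+1}f>0$ on $(c,d)$ forces $D^nf$ to be strictly increasing and nonconstant, so $f\in \F_n[c,d]$.

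For the reverse inclusion and for openness, I would work out in each case $n=0,1,2,3$ an explicit characterization of $V_n[c,d]$. Any $f\in \F_n[c,d]$ is determined by the constants $a_0,\dots,a_{n-1}$ prescribed at $c$ together with the increasing profile $\phi:=D^nf$ on $[c,d]$, which satisfies $\phi(c)=a_n$, $\phi(d)=b_n$. Taylor's formula with integral remainder translates each condition $D^jf(d)=b_j$, $j=0,\dots,n-1$, into a linear moment equation
\[
\int_c^d \frac{(d-t)^{n-1-j}}{(n-1-j)!}\,\phi(t)\,dt = b_j - \sum_{i=j}^{n-1}a_i\,\frac{(d-c)^{i-j}}{(i-j)!}.
\]
Computing the range of each moment as $\phi$ varies over increasing functions with $\phi(c)=a_n$, $\phi(d)=b_n$ — the extremes being attained in the limit by step functions jumping arbitrarily near one endpoint — yields a finite system of strict linear inequalities on $(a;b)$ which I would argue characterizes $V_n[c,d]$. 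For $n=0$ the system reduces to $a_0<b_0$; for $n=1$ to $a_1<b_1$ and $a_1(d-c)<b_0-a_0<b_1(d-c)$; for $n=2,3$ one works out the analogous (more intricate) systems. Strictness of all the inequalities — traceable to the non-constancy clause in $\F_n$ and Remark~\ref{r:V_n} — immediately yields openness of $V_n[c,d]$ in $\R^{2(n+1)}$.

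Given $(a;b)$ satisfying these strict inequalities, together with arbitrary $\al\in\A_n(a)$ and $\beta\in\B_n(b)$, I would realize them by some $f\in \F_n^\infty[c,d]$ in two steps. First, choose a smooth strictly increasing $\phi$ on $[c,d]$ with $\phi(c)=a_n$, $\phi(d)=b_n$ whose $n$-fold iterated integral from $c$ with Taylor constants $a_0,\dots,a_{n-1}$ hits $b_0,\dots,b_{n-1}$ at $d$. Continuous deformation of $\phi$ varies the $n$ moments continuously over a convex set, and the openness of the characterization is precisely what guarantees the prescribed moment values lie in its (open) image. Second, modify the resulting $f$ in disjoint small neighborhoods of $c$ and $d$ by adding smooth bumps of compact support whose Taylor expansions at the respective endpoint past order $n$ equal $\al$ and $\beta$; the sign conditions built into $\A_n(a)$ and $\B_n(b)$ (cf.\ Remark~\ref{r:signs}) ensure the first nonzero bump contribution past order $n$ has the correct sign relative to the leading $D^{n+1}f$, so that taking the bumps of sufficiently small amplitude preserves $D^{n+1}f>0$ on $(c,d)$.

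The main obstacle is the moment-theoretic surjectivity step for $n=2,3$: one must verify that the joint image of the $n$ moment functionals, as the strictly increasing profile $\phi$ varies smoothly over its admissible class, sweeps out precisely the open region cut out by the strict moment inequalities. For $n\leq 3$ this can be checked by an explicit perturbation analysis showing that localized modifications of $\phi$ near a point of $(c,d)$ move the moments in linearly independent directions, but the combinatorial and geometric complexity grows with $n$, which is presumably why $(Q_n)$ is currently established only through $n=3$.
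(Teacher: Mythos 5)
This theorem is cited from \cite{BHM2025}; the present paper does not prove it, so there is no internal argument to compare against. Taking your proposal on its own terms: the easy inclusion $V_n^{\infty}[c,d]\subseteq V_n[c,d]$, the reduction to $s=1$, and the translation of the endpoint data $D^jf(d)$, $j<n$, into moment conditions on the profile $\phi=D^nf$ are all sound. But two steps do not go through as written.

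First, the assertion that $V_n[c,d]$ is cut out by a \emph{finite system of strict linear inequalities} for $n=2,3$ is false. You compute the range of each moment functional \emph{separately}, getting intervals, but the joint range of the $n$ moments is not the product of those intervals. Normalize $[c,d]=[0,1]$, $a_n=0$, $b_n=1$, and write $\phi(t)=\mu([0,t])$ with $\mu$ a probability measure. For $n=2$, substituting $u=1-s$ turns the two moments into $\bigl(\int u\,d\nu,\ \frac{1}{2}\int u^2\,d\nu\bigr)$ over probability measures $\nu$ on $[0,1]$. The attainable set for $(\int u\,d\nu,\int u^2\,d\nu)$ is the convex hull of the moment curve $\{(u,u^2):u\in[0,1]\}$, i.e.\ $\{(x,y):x^2\le y\le x\}$, which is bounded by a parabola, not by hyperplanes. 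The same Markov--Krein phenomenon persists at $n=3$. So the ``system of strict linear inequalities'' you announce for $n\ge 2$ does not exist, and the argument built on it collapses at its first step.

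Second, the endpoint-modification argument is not justified. Once $\al\in\A_n(a)$ is fixed, the Taylor coefficients of the proposed bump $\rho$ at $c$ past order $n$ are \emph{prescribed}, so you cannot shrink the bump's amplitude without changing exactly the data you are trying to hit; and shrinking the support instead makes $D^{n+1}\rho$ large. Moreover, when $\al_{n+1}=0$ (which $\A_n(a)$ permits), the first-stage profile has $D^{n+1}f(c)=0$, so $D^{n+1}f$ is not bounded away from $0$ near $c$ and cannot absorb the sign-changing oscillation of $D^{n+1}\rho$ on the bump's support. Knowing only the sign of the first nonzero $\al_j$, $j>n$, controls $D^{n+1}$ of the modified function only in an infinitesimal neighborhood of $c$, not on the whole support of $\rho$. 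Realizing the infinite Taylor data at both endpoints while maintaining $D^{n+1}>0$ strictly on $(c,d)$ requires a more careful direct construction; together with the nonlinear moment geometry, this is where the real content of \cite{BHM2025} lies, and your proposal does not yet reach it.
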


We now indicate how $(Q_n)$ is useful for obtaining monotone approximations. The proof makes use of the following simple consequence of uniform continuity.

\begin{prop}[\cite{Bu2019}, Proposition 6.1]\label{p:small.intervals.f.not.constant}
Let $f\colon[a,b]\to\R$ be a nonconstant continuous function and let $\ep>0$. Then there is a partition $a=x_0<\dots<x_n=b$ of the interval $[a,b]$ so that $n\geq 2$ and on each subinterval $[x_i,x_{i+1}]$ the range of $f$ has diameter less than $\ep$ but $f$ is not constant.
\end{prop}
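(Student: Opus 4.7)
The plan is to start from a partition fine enough to control range diameters and then adjust it to enforce nonconstancy on each piece. By uniform continuity of $f$ on the compact interval $[a,b]$, pick $\delta>0$ so that $|x-y|<\delta$ implies $|f(x)-f(y)|<\ep$, and take an initial partition $a=y_0<y_1<\dots<y_m=b$ of mesh below $\delta$ with $m\geq 2$. Each piece $[y_i,y_{i+1}]$ already has $f$-range of diameter less than $\ep$; the only remaining obstruction is that $f$ may be constant on some pieces.

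To remove that, I would absorb each ``flat'' subinterval (one on which $f$ is constant) into an adjacent non-flat one. Continuity of $f$ at a shared endpoint forces two flats meeting there to carry the same constant value, so consecutive flats form a block of common value; and since $f$ is nonconstant on $[a,b]$, not every piece is flat, so every flat block has at least one non-flat neighbor available to absorb it. The critical observation preserving the range bound is that, if the flat block has value $c$ and is merged into a non-flat neighbor $J$, then $c$ equals $f$ evaluated at the shared endpoint and therefore already lies in $f(J)$; the merged range is exactly $f(J)$, still of diameter less than $\ep$. After the merges every remaining subinterval is non-flat with range of diameter less than $\ep$.

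If at least two subintervals survive we are done. Otherwise the merges have collapsed the partition to $[a,b]$ alone, and one further split is needed. For that I would prove a small splitting lemma: if $g\colon[a,b]\to\R$ is continuous and nonconstant, there is $c\in(a,b)$ such that $g$ is nonconstant on both $[a,c]$ and $[c,b]$. Setting $a^{*}=\sup\{x\in[a,b]:g\equiv g(a)\ \text{on}\ [a,x]\}$ and $b^{*}=\inf\{x\in[a,b]:g\equiv g(b)\ \text{on}\ [x,b]\}$, nonconstancy forces $a^{*}<b$ and $b^{*}>a$; and if $a^{*}\geq b^{*}$ then continuity across the overlap $[b^{*},a^{*}]$ yields $g(a)=g(b)$ and makes $g$ constant on $[a,a^{*}]\cup[b^{*},b]=[a,b]$, a contradiction. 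Hence $a^{*}<b^{*}$ and any $c\in(a^{*},b^{*})$ works. Both halves inherit the range-diameter bound from the whole, yielding the required partition with $n\geq 2$.

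The main obstacle in the argument is the range-diameter bookkeeping during merging; it is resolved by the shared-endpoint observation, which guarantees that no new values enter the range when a flat block is absorbed. Everything else is routine uniform continuity and an elementary case analysis.
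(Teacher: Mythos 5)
Your proof is correct, and it takes a genuinely different, self-contained route from the paper. The paper simply cites Proposition~6.1 of [Bu2019] for the existence of a partition into nonconstant subintervals with small $f$-range, and then handles the requirement $n\geq 2$ by adding a single point $x'\in(a,b)$ with $f(x')\notin\{f(a),f(b)\}$ (which exists since the range of $f$ is a nondegenerate interval), so that both $[a,x']$ and $[x',b]$ are nonconstant. You instead reprove the whole statement from uniform continuity, with a clean merging step and the key observation that absorbing a flat block into a non-flat neighbor adds no new values to the range (because the flat block's constant value is attained at the shared endpoint, which already lies in the neighbor). Your merging bookkeeping is sound: adjacent flat pieces carry the same constant by continuity, maximal flat blocks have at least one non-flat neighbor since $f$ is nonconstant, the number of surviving pieces equals the number of originally non-flat pieces, and if that number is one, the final range is exactly that of the single non-flat original piece, hence still below $\ep$. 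Your splitting lemma (via $a^*=\sup\{x:g\equiv g(a)\ \text{on}\ [a,x]\}$ and $b^*=\inf\{x:g\equiv g(b)\ \text{on}\ [x,b]\}$, with $a^*<b^*$ by nonconstancy) is more elaborate than the paper's direct choice of $x'$ but equally valid and perhaps more robust. The paper's route buys brevity by leaning on the citation; yours is fully elementary and self-contained.
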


The statement in \cite{Bu2019} did not require $n\geq 2$, but if $n=1$ we can choose any point $x'\in (a,b)$ where $f(x')\notin\{f(a),f(b)\}$ and use the partition $\{x_0,x_1,x_2\} = \{a,x',b\}$.

The next theorem extends Proposition 6.3 of \cite{Bu2019} which deals with the case $n=0$. The argument for parts (1)--(3) is building on the proof of the corresponding parts of that proposition. The additional parameter $r$ allows us to phrase the properties of $f(x)$ in terms of those of $f(-x)$ and this formulation is useful later.

\begin{prop}\label{p:approx.n.der.incr}
Assume $(Q_n)$. Let $a<b$, $I=[a,b]$, $\ep>0$, $r,s\in\{1,-1\}$. Let $f\colon rI\to\R$ be a $C^{n}$ function with $sD^nf$ nonconstant and increasing on $rI$. There is a $\de>0$ such that the following holds. Let $\al_0,\al_1,\dots$ and $\beta_0,\beta_1,\dots$ be sequences of real numbers such that
\begin{enumerate}[$\bullet$]
\item
$|\al_j-D^jf(ra)|<\de$ and $|\beta_j-D^jf(rb)|<\de$, $j=0,\dots,n$;

\item
either $\al_j=0$ for all $j>n$, or for the least $j>n$ for which $\al_j\not=0$ we have $sr^{n+j+1}\al_j>0$;

\item
either $\beta_j=0$ for all $j>n$, or for the least $j>n$ for which $\beta_j\not=0$ we have $s(-r)^{n+j+1}\beta_j>0$.
\end{enumerate}
Then 
there is a $C^\infty$ function $g$ such that
\begin{enumerate}
\item
$D^jg(ra)=\al_j$ and $D^jg(rb)=\beta_j$, $j=0,1,\dots$

\item
$sD^{n+1}g(rx) > 0$, $a<x<b$

\item
$|D^jg(rx)-D^jf(rx)|<\ep$, $a\leq x\leq b$, $j=0,\dots,n$

\item
For $c=a,b$, $D^jg(rc)$ has the same sign as $D^jf(rc)$ if $D^jf(rc)\not=0$, $j=0,\dots,n$.
\end{enumerate}
Suppose $D^jf$, $j=0,\dots,n$, are all monotone on $rI$. Then for $j=1,\dots,n$, $D^jf(ra)$ and $D^jf(rb)$ are not of opposite sign.
Under the additional assumption that for each $j=1,\dots,n$,
\begin{enumerate}[\rm(a),leftmargin=1.3cm]
\item
if $D^jf(ra)=0$ then $\al_jD^jf(rb)\geq 0$, and

\item
if $D^jf(rb)=0$ then $\beta_jD^jf(ra)\geq 0$,
\end{enumerate}
we can require that $D^jg(ra)$ has the same sign as $D^jf(ra)$ when $D^jf(ra)\not=0$, $D^jg(rb)$ has the same sign as $D^jf(rb)$ when $D^jf(rb)\not=0$, and
\begin{enumerate}
\setcounter{enumi}{4}
\item\label{2.9.5}
$D^jg(rx)\not=0$, $j=1,\dots,n$, $a<x<b$
\end{enumerate}
\end{prop}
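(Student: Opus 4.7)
The plan is to reduce to the case $r=1$, partition $[a,b]$ into small subintervals via Proposition~\ref{p:small.intervals.f.not.constant}, and glue together $C^\infty$ functions produced on each piece by $(Q_n)$. The case $r=-1$ is handled by the substitution $\tilde f(x)=f(-x)$ and $\tilde g(x)=g(-x)$ on $[a,b]$: with $\tilde s=s(-1)^{n+1}$, $\tilde s D^n\tilde f$ is increasing and nonconstant on $[a,b]$, and the hypothesized sign conditions translate exactly into the $\A_n^{\tilde s}$ and $\B_n^{\tilde s}$ conditions applied to $((-1)^j\al_j)_j$ and $((-1)^j\beta_j)_j$ at $a$ and $b$ respectively (this is the reason for the factors $r^{n+j+1}$ and $(-r)^{n+j+1}$ in the statement). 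From this point on I assume $r=1$.

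Apply Proposition~\ref{p:small.intervals.f.not.constant} to $D^nf$ to obtain a partition $a=x_0<\cdots<x_m=b$ with $m\geq 2$ so that on each $J_i=[x_i,x_{i+1}]$ the range of $D^nf$ has diameter less than an auxiliary $\tilde\ep>0$ and $D^nf$ is nonconstant. Because $f|_{J_i}\in\F_n^s[x_i,x_{i+1}]$, the endpoint tuple $(D^jf(x_i);D^jf(x_{i+1}))_{j=0}^n$ lies in $V_n^s[J_i]$. By the openness clause of $(Q_n)$ I can choose $\de>0$ so that any tuple within $\de$ of any of these finitely many tuples still lies in the corresponding $V_n^s[J_i]=V_n^{s,\infty}[J_i]$; I then shrink $\de$ further so that $\de<|D^jf(a)|$ and $\de<|D^jf(b)|$ whenever nonzero for $j=0,\dots,n$, and small enough, together with $\tilde\ep$, to force the $C^n$-estimate below.

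Next, I prescribe jets: at $a,b$ use the given $\al_j,\beta_j$; at each interior point $x_i$ set $D^jg(x_i)=D^jf(x_i)$ for $j=0,\dots,n$, $D^{n+1}g(x_i)=s$, and $D^jg(x_i)=0$ for $j\geq n+2$. At $x_i$, both the $\A_n^s$ condition on the right piece and the $\B_n^s$ condition on the left piece are satisfied because the least nonzero higher-order derivative is $D^{n+1}g(x_i)=s$, giving $s\cdot s=1>0$ and $s(-1)^{2n+2}s=1>0$ respectively. Since $V_n^s[J_i]=V_n^{s,\infty}[J_i]$, each piece carries a $C^\infty$ function realizing the prescribed full jet at both ends, and because the jets agree to all orders at interior partition points the concatenation $g$ is $C^\infty$ on $[a,b]$. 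Conditions (1) and (2) hold by construction. For (3), on each $J_i$ both $D^ng$ and $D^nf$ are monotone in direction $s$ with endpoint values within $\de$ of each other, so $|D^ng-D^nf|<\tilde\ep+2\de$ on $J_i$; integrating inductively downward in $j$ bounds $|D^jg-D^jf|$ by quantities of the form $\de+(b-a)^{n-j}(\tilde\ep+2\de)$, which can be made less than $\ep$ by choosing $\tilde\ep$ and $\de$ small. Condition (4) is immediate from $\de<|D^jf(rc)|$ whenever $D^jf(rc)\neq 0$.

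For the monotonicity part, if $D^jf$ is monotone on $rI$ for each $j=0,\dots,n$ and $D^jf(ra),D^jf(rb)$ had opposite signs for some $j\geq 1$, then monotonicity of $D^jf$ would force it through zero, producing a strict interior extremum of $D^{j-1}f$ and contradicting monotonicity of $D^{j-1}f$. Under hypotheses (a),(b) with $\de$ small, $D^jg(ra)$ and $D^jg(rb)$ agree in sign with $D^jf(ra)$ and $D^jf(rb)$ whenever the latter are nonzero. I then induct downward from $j=n$: $sD^{n+1}g>0$ on $(a,b)$ makes $D^ng$ strictly monotone on $[a,b]$, and endpoint sign compatibility---where (a) and (b) ensure that a zero of $D^jf$ at an endpoint is paired with a prescribed $D^jg$-value of sign matching $D^jf$ at the other endpoint---forces $D^ng\neq 0$ on $(ra,rb)$; then $D^{n-1}g$ is strictly monotone on $[a,b]$, endpoint signs are again compatible, and $D^{n-1}g\neq 0$ on $(ra,rb)$; continuing down to $j=1$ gives (5). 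The principal obstacle is the sign bookkeeping---both verifying the $\A_n^s$/$\B_n^s$ conditions at the interior joints (which motivates the specific choice $D^{n+1}g(x_i)=s$ with zero higher orders) and pushing through the reduction $r\mapsto -r$---after which (3) and (5) follow from routine inductive integration combined with the strict monotonicity guaranteed by $sD^{n+1}g>0$.
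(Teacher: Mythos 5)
Your proposal is correct and follows essentially the same route as the paper's proof: reduce to $r=1$ by the substitution $x\mapsto -x$, partition $[a,b]$ via Proposition \ref{p:small.intervals.f.not.constant}, prescribe the jet $D^{n+1}=s$ (higher orders zero) at interior nodes, invoke the openness of $V_n^s$ and $V_n^s=V_n^{s,\infty}$ from $(Q_n)$ on each piece, glue, and obtain (3) by downward integration and (5) by downward induction on strict monotonicity with the endpoint sign compatibility supplied by (a) and (b). The only cosmetic differences (applying openness to every subinterval rather than just the two extreme ones, and integrating globally from $a$ rather than piecewise from the interior nodes) do not affect correctness.
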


\begin{proof}
Note that $sD^nf$ being nonconstant and increasing on $rI$ is equivalent to $x\mapsto srD^nf(rx)$ being nonconstant and increasing on $I$. (This is clear if $r=1$, and if $r=-1$ then $x\mapsto srD^nf(rx) = -sD^nf(-x)$ has the same monotonicity as $sD^nf$.)

We may assume $r=s=1$. (Given this case and $I=[a,b]$, $r$, $s$, $f$ as in the assumption, define $\bar{f}\colon I\to\R$ by $\bar{f}(x)=sr^{n+1}f(rx)$. Then $D^n\bar{f}(x) = srD^nf(rx)$ is nonconstant and increasing. By the case $r=s=1$, we have a $\de>0$ satisfying the conclusion of the proposition for $\bar{f}$. It is straightforward to verify that this same $\de$ satisfies the conclusion for $f$. Briefly, if sequences $\al_j$, $\beta_j$ satisfy the three bullets, check that $\bar{\al}_j=sr^{n+j+1}\al_j$, $\bar{\beta}_j=sr^{n+j+1}\beta_j$ satisfy those bullets for $\bar{f}$. By the choice of $\de$, get a $C^\infty$ function $\bar{g}\colon I\to\R$ satisfying (1)--(5) with respect to $\bar{f}$ and $\bar{\al}_j,\bar{\beta}_j$, and take $g(x) = sr^{n+1}\bar{g}(rx)$.)

From Proposition \ref{p:small.intervals.f.not.constant}, get points $a=x_0<\dots<x_{k}=b$ so that $k\geq 2$ and for $i=0,\dots,k-1$,
\begin{enumerate}[(i)]
\item\label{i:g0.Hoischen:i}
$D^nf(x_i)<D^nf(x_{i+1})$

\item\label{i:g0.Hoischen:ii}
$(b-a)^t(D^nf(x_{i+1})-D^nf(x_{i}))<\ep$, for $t=0,\dots,n$
\end{enumerate}
(Only the values $t=0,n$ need be mentioned in (ii), but the above formulation is convenient.)
Since $V_n[x_0,x_1]$ and $V_n[x_{k-1},x_k]$ are open, there is a $\de>0$ such that
\begin{enumerate}
\item[(iii)]
$|\al_j-D^jf(x_0)|<\de$ implies
$(\al_j\,;\,D^jf(x_1))\in V_n[x_0,x_1]$, $j=0,\dots,n$.

\item[(iv)]
$|\beta_j-D^jf(x_{k})|<\de$ implies
$(D^jf(x_{k-1})\,;\,\beta_j)\in V_n[x_{k-1},x_k]$, $j=0,\dots,n$.
\end{enumerate}
Furthermore, take $\de$ small enough so that (ii) holds in the form
\begin{enumerate}
\renewcommand{\theenumi}{\alph{enumi}}
\item[(ii)$'$]
$(b-a)^t(D^nf(x_{i+1})-D^nf(x_{i})+\de)<\ep$, for $t=0,\dots,n$
\end{enumerate}
Now suppose that $\al_j$, $\beta_j$ are sequences as given in the hypothesis for this value of $\de$. Define sequences $(\al^i_j:j=0,1,\dots)$, $i=0,\dots,k$, as follows.
\begin{itemize}
\item
$\al^0_j=\al_j$, $\al^{k}_j=\beta_j$

\item
for $0<i<k$, $\al^i_j=D^jf(x_i)$, $j=0,\dots,n$, $\al^i_{n+1}=1$, $\al^i_j=0$, $j>n+1$
\end{itemize}
We have
\[
\al^0_n<\al^1_n<\dots<\al^{k-1}_n<\al^k_n.
\]
Indeed, for $i=1,\dots,k-2$, $\al^i_n = D^nf(x_i) < D^nf(x_{i+1})=\al^{i+1}_n$ by (i).  We also have $\al^0_n = \al_n < D^nf(x_1) = \al^1_n$ by (iii) and Remark \ref{r:V_n}. Similarly, $D^nf(x_{k-1}) < \beta_n = \al^k_n$.
By $(Q_n)$ there are $C^\infty$ functions $g_i\colon[x_i,x_{i+1}]\to\R$, $i=0,\dots,k-1$, such that
\begin{enumerate}[(i)]
\setcounter{enumi}{4}
\item
$D^jg_i(x_i)=\al^i_j$ and $D^jg_i(x_{i+1})=\al^{i+1}_j$, $j=0,1,\dots$

\item
$D^{n+1}g_i(x)>0$, $x_i<x<x_{i+1}$

\item
$|D^ng_i(x)-D^nf(x)|<\ep/(b-a)^t$, $t=0,\dots,n$, $x_i\leq x\leq x_{i+1}$
\end{enumerate}
For (vii), for $i=1,\dots,k-2$, use the fact that, by (ii), $D^nf$ varies by less than $\ep/(b-a)^t$ on $[x_i,x_{i+1}]$, and the fact that $D^nf$ and $D^ng_i$ are both increasing on $[x_i,x_{i+1}]$ and have the same values at the endpoints. On $[x_0,x_1]$, the functions $D^nf$, $D^ng_0$ agree at the right endpoint, so their difference at any point is not more that the larger of $|D^nf(x_1)-D^nf(x_0)|<\ep/(b-a)^t$ and (using (ii)$'$)
\begin{align*}
|D^ng(x_1)-D^ng(x_0)| & = |D^nf(x_1)-\al_n| \\
& \leq |D^nf(x_1)-D^nf(x_0)|+|D^nf(x_0)-\al_n| \\
& < |D^nf(x_1)-D^nf(x_0)|+\de<\ep/(b-a)^t.
\end{align*}
Similarly for $i=k-1$.
Now we verify, for $x\in[x_i,x_{i+1}]$, by induction on $j=0,\dots,n$ that
\begin{enumerate}
\item[(viii)]
$|D^{n-j}g_i(x) - D^{n-j}f(x)| < \ep/(b-a)^{t-j}$, $t=0,\dots,n$.
\end{enumerate}
Taking $t=j$ will then give us
\[
|D^{n-j}g_i(x) - D^{n-j}f(x)| < \ep,\ j=0,\dots,n,
\]
and then $g=g_0\cup\dots\cup g_{k-1}$ satisfies (1)--(3).

The case $j=0$ of (viii) is (vii).
Suppose $0<j\leq n$ and (viii) holds for $j-1$. Consider first the case $0<i<k$. Since $g_i$ and $f$ have the same derivatives up to order $n$ at $x_i$, we have
\begin{align*}
& |D^{n-j}g_i(x) - D^{n-j}f(x)| \\
& = \left|D^{n-j}g_i(x_i) + \int_{x_i}^x D^{n-j+1}g_i(t)\,dt - D^{n-j}f(x_i)-\int_{x_i}^x D^{n-j+1}f(t)\,dt \right| \\
& \leq \int_{x_i}^x |D^{n-j+1}g_i(t) - D^{n-j+1}f(t)|\,dt \\
& \leq (x-x_i)\ep/(b-a)^{t-j+1} \leq \ep/(b-a)^{t-j}
\end{align*}
and the second inequality is strict unless $x=x_i$, in which case the third inequality is strict. If $i=0$, a similar argument works replacing $x_i$ by
$x_{i+1}=x_1$ and replacing the final $\int_{x_i}^x$ by $\int_x^{x_1}$.

Clause (4) is immediate from (1) as long as we choose $\de>0$ small enough so that the conditions $|\al_j-D^jf(a)|<\de$ and $|\beta_j-D^jf(b)|<\de$ ensure that for $j=0,\dots,n$,
\begin{itemize}
\item
$\al_j$ has the same sign as $D^jf(a)$ when $D^jf(a)\not=0$, and

\item
$\beta_j$ has the same sign as $D^jf(b)$ when $D^jf(b)\not=0$.
\end{itemize}

For (5), assume that $D^jf$, $j=0,\dots,n$, are all monotone on $[a,b]$. Since $D^nf$ is not constant, neither are any of $D^jf$, $j=0,\dots,n$. Since they are monotone, it follows that $D^jf(a)\not=D^jf(b)$ for $j=0,\dots,n$, and in particular if one of these values is zero, then the other is nonzero.
For $j=1,\dots,n$, the fact that $D^{j-1}f$ is monotone implies that $D^jf$ is everywhere $\leq 0$ or everywhere $\geq 0$. In particular, $D^jf(a)$ and $D^jf(b)$ are not of opposite sign. By (4) and the conditions (a) and (b), it then follows that $\al_j$ and $\beta_j$ are not of opposite sign. (For example, if $D^jf(a)=0$ then as pointed out above, $D^jf(b)\not=0$, and by (a), either $\al_j=0$ or $\al_j$ has the same sign as $D^jf(b)$ and hence has the same sign as $\beta_j$.)
It now follows by reverse induction on $j=1,\dots,n$, starting with $j=n$ and using (2) for this initial step, that $D^jg$ is strictly monotone and hence on $(a,b)$ takes values strictly between $D^jg(a)=\al_j$ and $D^jg(b)=\beta_j$, and hence takes nonzero values on $(a,b)$.
\end{proof}

If $\al_j=D^jf(ra)$, $\beta_j=D^jf(rb)$ for $j=0,\dots,n$, then (a) and (b) are trivially satisfied since their hypotheses are $\al_j=0$ and $\beta_j=0$, respectively. The proof in this case uses only $(Q_n^-)$ and is obtained by removing all references to $\de$ (more precisely, (iii), (iv), (ii)$'$ should be removed, in the explanation of (vii), the Case $[x_0,x_1]$ is the same as that for the other intervals $[x_i,x_{i+1}]$, and clause (4) is trivial, so it and its proof can be deleted). We state this version as a proposition.

\begin{prop}\label{p:approx.n.der.incr.minus}
Assume $(Q_n^-)$. Let $a<b$, $I=[a,b]$, $\ep>0$, $r,s\in\{1,-1\}$. Let $f\colon rI\to\R$ be a $C^{n}$ function with $sD^nf$ nonconstant and increasing on $rI$. Let $\al_0,\al_1,\dots$ and $\beta_0,\beta_1,\dots$ be sequences of real numbers such that
\begin{enumerate}[$\bullet$]
\item
$\al_j=D^jf(ra)$ and $\beta_j=D^jf(rb)$, $j=0,\dots,n$;

\item
either $\al_j=0$ for all $j>n$, or for the least $j>n$ for which $\al_j\not=0$ we have $sr^{n+j+1}\al_j>0$;

\item
either $\beta_j=0$ for all $j>n$, or for the least $j>n$ for which $\beta_j\not=0$ we have $s(-r)^{n+j+1}\beta_j>0$.
\end{enumerate}
Then 
there is a $C^\infty$ function $g$ such that
\begin{enumerate}
\item\label{det.1}
$D^jg(ra)=\al_j$ and $D^jg(rb)=\beta_j$, $j=0,1,\dots$

\item\label{det.2}
$sD^{n+1}g(rx) > 0$, $a<x<b$

\item\label{det.3}
$|D^jg(rx)-D^jf(rx)|<\ep$, $a\leq x\leq b$, $j=0,\dots,n$
\end{enumerate}
Suppose $D^jf$, $j=0,\dots,n$, are all monotone on $rI$. Then for $j=1,\dots,n$, $D^jf(ra)$ and $D^jf(rb)$ are not of opposite sign.
We can require that
\begin{enumerate}
\setcounter{enumi}{3}
\item\label{det.4}
$D^jg(rx)\not=0$, $j=1,\dots,n$, $a<x<b$
\end{enumerate}
\end{prop}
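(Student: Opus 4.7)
The plan is to follow essentially the same argument as for Proposition \ref{p:approx.n.der.incr} but with the $\delta$-machinery stripped away, since here $\al_j = D^jf(ra)$ and $\beta_j=D^jf(rb)$ for $j=0,\dots,n$ exactly, so no endpoint approximation is needed and only the equality $V_n^s[c,d]=V_n^{s,\infty}[c,d]$ from $(Q_n^-)$ is used.

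First I would reduce to the case $r=s=1$ by the same change of variable as in the earlier proof: set $\bar f(x)=sr^{n+1}f(rx)$, check that the sign hypotheses on $\bar\al_j=sr^{n+j+1}\al_j$ and $\bar\beta_j=sr^{n+j+1}\beta_j$ are exactly those required for $\bar f$, produce $\bar g$ for the case $r=s=1$, and recover $g(x)=sr^{n+1}\bar g(rx)$. Assuming $r=s=1$, I would apply Proposition \ref{p:small.intervals.f.not.constant} to obtain a partition $a=x_0<x_1<\dots<x_k=b$ with $k\geq 2$ such that on each $[x_i,x_{i+1}]$, $D^nf$ is not constant and $(b-a)^t(D^nf(x_{i+1})-D^nf(x_i))<\ep$ for $t=0,\dots,n$.

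Next I would define sequences $(\al^i_j)_{j\geq 0}$, $i=0,\dots,k$, by $\al^0_j=\al_j$, $\al^k_j=\beta_j$, and for $0<i<k$ by $\al^i_j=D^jf(x_i)$ for $j=0,\dots,n$, $\al^i_{n+1}=1$, $\al^i_j=0$ for $j>n+1$. Since $D^nf$ is increasing and nonconstant on $[x_i,x_{i+1}]$, Remark \ref{r:V_n} together with $\al^0_n=D^nf(x_0)$ and $\al^k_n=D^nf(x_k)$ gives the strict chain $\al^0_n<\al^1_n<\dots<\al^k_n$. Each pair $(\al^i;\al^{i+1})$ then lies in $V_n[x_i,x_{i+1}]$ (witness: $f$ itself on the inner intervals, and $f$ on the outer intervals, noting that the $\al^i,\al^{i+1}$ sequences agree with $f$'s derivative data up to order $n$), so each also lies in $V_n^\infty[x_i,x_{i+1}]$ by $(Q_n^-)$. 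This yields $C^\infty$ functions $g_i\colon[x_i,x_{i+1}]\to\R$ matching the prescribed infinite jet sequences at both endpoints with $D^{n+1}g_i>0$ on the open interval. Concatenation $g=g_0\cup\dots\cup g_{k-1}$ is $C^\infty$ by matching jets at the interior $x_i$, and immediately gives \ref{det.1} and \ref{det.2}.

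For \ref{det.3}, the bound $|D^ng_i(x)-D^nf(x)|<\ep/(b-a)^t$ on each subinterval holds because both functions are increasing with equal endpoint values, so their difference on $[x_i,x_{i+1}]$ is at most $D^nf(x_{i+1})-D^nf(x_i)<\ep/(b-a)^t$. The crucial simplification from the previous proof is that on the \emph{outer} subintervals $[x_0,x_1]$ and $[x_{k-1},x_k]$ the endpoint values of $D^ng_i$ now agree with those of $D^nf$, so the argument for the inner intervals applies verbatim. Downward induction on $j$, integrating the bound on $D^{n-j+1}g_i-D^{n-j+1}f$ from the common endpoint $x_i$ (or $x_{i+1}$) and using $|x-x_i|\leq b-a$, yields $|D^{n-j}g_i(x)-D^{n-j}f(x)|<\ep/(b-a)^{t-j}$; setting $t=j$ gives \ref{det.3}. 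Finally, for \ref{det.4}, assuming $D^jf$ is monotone on $[a,b]$ for each $j=0,\dots,n$, the monotonicity of $D^{j-1}f$ forces $D^jf$ to not change sign, so $\al_j=D^jf(a)$ and $\beta_j=D^jf(b)$ are not of opposite sign; a reverse induction on $j=n,n-1,\dots,1$ starting from the strict monotonicity of $D^ng$ on $(a,b)$ (given by \ref{det.2}) shows each $D^jg$ is strictly monotone on $(a,b)$ with endpoint values $\al_j,\beta_j$ of the same sign or one of them zero, so $D^jg$ takes nonzero values on $(a,b)$. The only mildly delicate point is confirming that the equality $(\al^i;\al^{i+1})\in V_n[x_i,x_{i+1}]$ at the outer intervals is genuinely witnessed by $f$ (rather than by some perturbation), which is immediate here because the prescribed endpoint data match those of $f$.
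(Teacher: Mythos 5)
Your proposal is correct and follows essentially the same route as the paper, which explicitly obtains this proposition from the proof of Proposition \ref{p:approx.n.der.incr} by deleting the $\de$-dependent steps (iii), (iv), (ii)$'$, treating the outer subintervals exactly like the inner ones since the endpoint jets now agree with those of $f$, and dropping the now-trivial sign clause. Your handling of the membership $(\al^i;\al^{i+1})\in V_n[x_i,x_{i+1}]$ via $f$ itself, the downward induction for \ref{det.3}, and the reverse induction for \ref{det.4} all match the paper's argument.
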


\section{Approximating polynomials on a half-line}
\label{s:Approximating polynomials}

In this section we prove Theorems \ref{t:approx.a.poly.on.platform} and \ref{t:approx.a.poly.on.platform.2} which provide approximations to a polynomial $f$ and its derivatives on a half-line by a $C^\infty$ function $g$ and its derivatives, with the derivatives of $g$ having prescribed values of order larger than $n$ at the endpoint of the half-line.

The first lemma strengthens \cite{Bu2019}, Lemma 6.4, by requiring $\int_x^\infty u < u(x) < \ep(x)$ rather than just $\int_x^\infty u < \ep(x)$.

\begin{lem}\label{l:u.drops.quickly.to.zero}
Let $\ep\colon[0,\infty)\to\R$ be a positive continuous functions.
Let $\beta_j$, $j=0,1,\dots$ be a sequence of real numbers such that $0<\beta_0<\ep(0)$ and either $\beta_j=0$ for all $j>0$, or for the least $j>0$ with $\beta_j\not=0$ we have $\beta_j < 0$.
Then there is a positive $C^\infty$ function $u$ on $[0,\infty)$ such that
\begin{enumerate}
\item
$Du(x)<0$, $x>0$.

\item
$D^ju(0)=\beta_j$, $j=0,1,\dots$

\item
$\int_x^\infty u(t)\,dt < u(x) < \ep(x)$, $x\geq 0$.
\end{enumerate}
If $\ep(x)=\ep_0$ is a constant function, then we can allow $\beta_0=\ep_0$ with the inequality $u(x) < \ep(x)=\ep_0$ holding for $0< x<\infty$.
\end{lem}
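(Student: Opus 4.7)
My approach is to construct $u$ by producing a $C^\infty$ ``head'' $v$ on some $[0,\delta]$ that carries the prescribed Taylor data at $0$ and is strictly decreasing there, then splicing on a positive, strictly decreasing, fast-decaying ``tail'' $g$ via a smooth cutoff.

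\emph{Head.} Apply Borel's theorem to get $v\in C^\infty(\R)$ with $D^jv(0)=\beta_j$ for all $j\geq 0$. In the generic case, let $j_0>0$ be the least index with $\beta_{j_0}\neq 0$; by hypothesis $\beta_{j_0}<0$. Continuity of $D^{j_0}v$ gives $D^{j_0}v<0$ on some $[0,\delta]$, and successive integration (using $D^jv(0)=0$ for $1\leq j<j_0$) yields $D^jv<0$ on $(0,\delta]$ for each such $j$, hence $Dv<0$ on $(0,\delta]$; combined with $v(0)=\beta_0>0$ this gives $v>0$ on $[0,\delta]$. In the degenerate case where $\beta_j=0$ for all $j>0$ (where the Borel construction can be locally constant near $0$), I instead take the explicit $v(x)=\beta_0-e^{-1/x^2}$ with $v(0)=\beta_0$: this is $C^\infty$ with all Taylor coefficients at $0$ equal to $\beta_j$, and is positive and strictly decreasing on $[0,\delta]$ for small $\delta$. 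Shrink $\delta$ once more so that $\delta<3/4$ and $v<\ep$ on $[0,\delta]$; the latter uses continuity of $\ep$ and $\beta_0<\ep(0)$ (in the constant case $\ep\equiv\ep_0$ with $\beta_0=\ep_0$, $v(0)=\ep_0$ and strict decrease of $v$ give $v(x)<\ep_0$ only for $x>0$, as required).

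\emph{Tail and gluing.} Construct a $C^\infty$ strictly decreasing positive function $g\colon[0,\infty)\to(0,\infty)$ with (i) $g\leq\ep/2$, (ii) $\int_x^\infty g\leq g(x)/2$, and (iii) $g(\delta/3)<v(2\delta/3)$. Concretely, let $m(x)=\min_{0\leq y\leq x+1}\ep(y)/2$ (continuous, non-increasing, positive), convolve with a smooth one-sided mollifier of small support in $[0,\eta]$ to obtain a $C^\infty$ non-increasing positive $\bar\ep\leq m\leq\ep/2$, and set $g(x)=c\,\bar\ep(x)\,e^{-\lambda x}$ with $\lambda>2$ and $c\in(0,1]$ small enough for (iii). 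Then $g$ is $C^\infty$ and positive, $Dg=c\,e^{-\lambda x}(\bar\ep'-\lambda\bar\ep)<0$ gives strict decrease, $g\leq\ep/2$, and $\int_x^\infty g\leq c\,\bar\ep(x)\int_x^\infty e^{-\lambda t}\,dt=g(x)/\lambda\leq g(x)/2$. Fix a smooth cutoff $\chi\colon[0,\infty)\to[0,1]$ with $\chi\equiv 1$ on $[0,\delta/3]$, $\chi\equiv 0$ on $[2\delta/3,\infty)$ and $\chi'\leq 0$, and define
\[
u(x)=\chi(x)v(x)+(1-\chi(x))g(x).
\]
Then $u=v$ on $[0,\delta/3]$ (so $D^ju(0)=\beta_j$), $u=g$ on $[2\delta/3,\infty)$, and elsewhere $u$ is a convex combination of positive numbers, so $u$ is positive and $u\leq\max(v,g)<\ep$. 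From the identity $Du=\chi\,Dv+(1-\chi)Dg+\chi'(v-g)$ together with $Dv,Dg<0$, $\chi'\leq 0$, and $v-g>0$ on $[\delta/3,2\delta/3]$ (from (iii) and monotonicity), every summand is $\leq 0$ and at least one is strictly negative at each $x>0$, so $Du<0$ throughout $(0,\infty)$.

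The main obstacle is the pointwise decay inequality $\int_x^\infty u<u(x)$. On $[2\delta/3,\infty)$, $u=g$ and the inequality holds with margin $1-1/\lambda\geq 1/2$. For $x\in[0,2\delta/3]$, strict decrease of $u$ gives
\[
\int_x^\infty u\;\leq\;(2\delta/3-x)u(x)+\int_{2\delta/3}^\infty g\;\leq\;(2\delta/3)u(x)+g(2\delta/3)/2,
\]
and since $u$ is decreasing with $u(2\delta/3)=g(2\delta/3)$, we have $g(2\delta/3)\leq u(x)$, so the right-hand side is at most $(2\delta/3+1/2)u(x)<u(x)$, using $\delta<3/4$. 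The constant-$\ep_0$ case is handled by noting $u(0)=\beta_0=\ep_0$ and $Du<0$ on $(0,\infty)$, which together force $u(x)<\ep_0=\ep(x)$ precisely for $x>0$, as required.
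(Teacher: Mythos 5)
Your proof is correct, and it takes a genuinely different route from the paper's. The paper builds $u$ as an infinite concatenation of $C^\infty$ pieces over a partition $0=a_0<a_1<\cdots$ of $[0,\infty)$, prescribing node values $b_n$ that decay geometrically (essentially $r_n/5^n$) and matching one-jet data at the interior nodes; each piece is manufactured by Proposition \ref{p:approx.n.der.incr} with $n=0$, $s=-1$ (so the construction leans on $(Q_0)$ and on the machinery already set up for the main theorems), and the inequality $\int_x^\infty u<u(x)$ comes from summing the tail $\sum_{i>n}b_i<b_n/4$ together with small-interval corrections. You instead split the problem into a local jet problem and a global decay problem: Borel's theorem (plus the explicit $\beta_0-e^{-1/x^2}$ in the degenerate case) handles the prescribed derivatives at $0$ on a short interval where monotonicity follows from the sign of the first nonvanishing $\beta_{j_0}$, while an explicit mollified-and-exponentially-damped tail $g=c\,\bar\ep\,e^{-\lambda x}$ gives $\int_x^\infty g\le g(x)/\lambda$ for free; the partition-of-unity splice preserves negativity of the derivative because $v>g$ on the overlap, and the integral inequality on the compact head follows from monotonicity of $u$ and $\delta<3/4$. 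Your argument is more elementary and self-contained (it does not invoke Proposition \ref{p:approx.n.der.incr} or $(Q_0)$), whereas the paper's reuses its own approximation engine uniformly. The only points worth tightening in a write-up are cosmetic: positivity of $v$ on $[0,\delta]$ needs the same shrinking of $\delta$ you already perform for $v<\ep$ (strict decrease from a positive value does not by itself give positivity on a fixed interval), and one should note explicitly that $\chi'(v-g)$ vanishes off $[\delta/3,2\delta/3]$, which is where the comparison $v>g$ is available.
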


\begin{proof}
We may assume that $\ep(x)$ is decreasing (in the non-strict sense that $x\leq y$ implies $\ep(x)\geq \ep(y)$).
Define positive numbers $b_0>b_1>\dots$ and numbers $0=a_0<a_1<\dots$ as follows.
\[
\begin{tikzpicture}
\def\u{2.5cm}
\def\d{2.5cm}
\draw (0,0) -- (2*\u,0);
\draw (0,0) -- (0,1.2*\d);
\foreach \i/\j in {0/0,0.25/a_1,0.5/0.5,1/1,1.5/1.5,2/2}
{
\node[below] at (\i*\u,0) {\sz \rule{0pt}{5pt}$\j$};
}
\draw (0,0.65*\d) -- node[above] {\sz $b_0$} ++(0.25*\u,0) -- ++(0,-0.65*\d);
\draw (0.25*\u,0.4*\d) -- node[above] {\sz $b_1$} ++(0.25*\u,0) -- ++(0,-0.4*\d);
\draw (0.5*\u,0.3*\d) -- node[above] {\sz $b_2$} ++(0.5*\u,0) -- ++(0,-0.3*\d);
\draw (1*\u,0.2*\d) -- node[above] {\sz $b_3$} ++(0.5*\u,0) -- ++(0,-0.2*\d);
\draw (1.5*\u,0.1*\d) -- node[above] {\sz $b_4$} ++(0.5*\u,0) -- ++(0,-0.1*\d);
\coordinate (start) at (0*\u,1*\d);
\coordinate (end) at (2*\u,0.333*\d);
\coordinate (c1) at ($(start) + 1.4*(0.5*\u,-0.4*\d)$);
\coordinate (c4) at ($(end) - 1.4*(0.5*\u,-0.067*\u)$);
\draw (start) .. controls (c1) and (c4) .. (end) node[above,pos=0.37,yshift=2pt] {\sz $\ep$};
\begin{scope}[xshift=2.8*\u]
  \def\uu{2.5cm}
  \def\dd{2.5cm}
  \draw (0,0) -- (1*\uu,0);
  \foreach \i/\j in {0/{a_0},0.48/{a_1},0.65/{a'_1},1/{a_2}}
  {
  \node[below] at (\i*\uu,0) {\sz \rule{0pt}{7pt}$\j$};
  }
  \draw (0,0) -- (0,1*\dd);
  \draw (0,1*\d) -- ++(0.48*\uu,0) -- ++(0,-1*\dd);
  \draw[dashed] (0.6*\uu,0) -- ++(0,0.5*\dd);
  \draw (0.48*\uu,0.5*\dd) -- ++(0.52*\uu,0) -- ++(0,-0.5*\dd);
  \node[left] at (0,1*\dd) {\sz $b_0$};
  \node[right] at (0.48*\uu,0.9*\dd) {\sz $b'_0$};
  \node[left] at (0.48*\uu,0.5*\dd) {\sz $b_1$};
  \node[right] at (1*\uu,0.4*\dd) {\sz $b'_1$};
  \coordinate (sstart) at (0*\uu,1*\dd);
  \coordinate (p1) at (0.48*\uu,0.9*\dd);
  \coordinate (p2) at (0.6*\uu,0.5*\dd);
  \coordinate (eend) at (1*\uu,0.4*\dd);
  \coordinate (c1) at ($(sstart) + 0.05*(1*\uu,-1*\dd)$);
  \coordinate (c2) at ($(p1) - 0.05*(1*\uu,-1*\dd)$);
  \coordinate (c3) at ($(p1) + 0.05*(1*\uu,-1*\dd)$);
  \coordinate (c4) at ($(p2) - 0.05*(1*\uu,-1*\dd)$);
  \coordinate (c5) at ($(p2) + 0.05*(1*\uu,-1*\dd)$);
  \coordinate (c6) at ($(eend) - 0.05*(1*\uu,-1*\dd)$);
  \draw[red] (sstart)
    .. controls (c1) and (c2) .. (p1) node[black,below,pos=0.5] {\sz $u_0$}
    .. controls (c3) and (c4) .. (p2) node[black,right,pos=0.5] {\sz $u'_1$}
    .. controls (c5) and (c6) .. (eend) node[black,below,pos=0.5] {\sz $u_1$};
\end{scope}
\begin{scope}[xshift=4.5*\u]
  \def\uu{2.5cm}
  \def\dd{2.5cm}
  \draw (0,0) -- (1*\uu,0);
  \foreach \i/\j in {0/{a'_{n-1}},0.48/{a_n},0.65/{a'_n},1/{a_{n+1}}}
  {
  \node[below] at (\i*\uu,0) {\sz \rule{0pt}{7pt}$\j$};
  }
  \draw[dashed] (0,0) -- (0,1*\dd);
  \draw (0,1*\d) -- ++(0.48*\uu,0) -- ++(0,-1*\dd);
  \draw[dashed] (0.6*\uu,0) -- ++(0,0.5*\dd);
  \draw (0.48*\uu,0.5*\dd) -- ++(0.52*\uu,0) -- ++(0,-0.5*\dd);
  \node[left] at (0,1*\dd) {\sz $b_{n-1}$};
  \node[right] at (0.48*\uu,0.9*\dd) {\sz $b'_{n-1}$};
  \node[left] at (0.48*\uu,0.5*\dd) {\sz $b_{n}$};
  \node[right] at (1*\uu,0.4*\dd) {\sz $b'_{n}$};
  \coordinate (sstart) at (0*\uu,1*\dd);
  \coordinate (p1) at (0.48*\uu,0.9*\dd);
  \coordinate (p2) at (0.6*\uu,0.5*\dd);
  \coordinate (eend) at (1*\uu,0.4*\dd);
  \coordinate (c1) at ($(sstart) + 0.05*(1*\uu,-1*\dd)$);
  \coordinate (c2) at ($(p1) - 0.05*(1*\uu,-1*\dd)$);
  \coordinate (c3) at ($(p1) + 0.05*(1*\uu,-1*\dd)$);
  \coordinate (c4) at ($(p2) - 0.05*(1*\uu,-1*\dd)$);
  \coordinate (c5) at ($(p2) + 0.05*(1*\uu,-1*\dd)$);
  \coordinate (c6) at ($(eend) - 0.05*(1*\uu,-1*\dd)$);
  \draw[red] (sstart)
    .. controls (c1) and (c2) .. (p1) node[black,below,pos=0.5] {\sz $u_{n-1}$}
    .. controls (c3) and (c4) .. (p2) node[black,right,pos=0.5] {\sz $u'_{n}$}
    .. controls (c5) and (c6) .. (eend) node[black,below,pos=0.5] {\sz $u_{n}$};
\end{scope}
\end{tikzpicture}
\]
Let $b_0=\beta_0$. By assumption, $b_0\leq \ep(0)$. If $b_0<\ep(0)$, choose a positive number $a_1<1/2$ close enough to $a_0$ so that $b_0<\ep(a_1)$. If $b_0=\ep(0)=\ep_0$, choose any positive number $a_1<1/2$. Also choose a decreasing sequence of positive numbers $r_1>r_2>\dots$ with $r_n<\ep(a_{n+1})$. Then set $a_n=(n-1)/2$, $n=2,3,\dots$, and define $b_n=r_n/5^n$, $n=1,2,\dots$. We have
\begin{enumerate}[(i)]
\item
$b_0\leq \ep(a_1)$ with $b_0 < \ep(a_1)$ unless $\ep(x)=\ep_0$ is constant, and $b_n<\ep(a_{n+1})$ for $n=1,2,\dots$

\item
$\sum_{i=n+1}^\infty b_i < b_n/4$, $n=0,1,\dots$
\end{enumerate}
In (i), the bounds on $b_0$ hold by the choice of $a_1$ and $b_n<\ep(a_{n+1})$ is clear from the definition of $b_n$ for $n>0$. For (ii), we have
$\sum_{i=n+1}^\infty b_i = \sum_{i=n+1}^\infty r_i/5^i < \sum_{i=n+1}^\infty r_{n+1}/5^i
= r_{n+1}/(4\cdot 5^n) < r_n/(4\cdot 5^n) = b_n/4$.

Adding $b_n$ to both sides of (ii), we get $\sum_{i=n}^\infty b_i < (5/4)b_n < (4/3)b_n$, so $\sum_{i=n}^\infty (3/4)b_i < b_n$.
Choose $a'_n$, $n=1,2,\dots$, and $b'_n$, $n=0,1,\dots$, so that
$a_n<a'_n<a_{n+1}$ and $b_{n+1}<b'_n<b_n$, and $a'_n$, $b'_n$ are close enough to $a_n$, $b_n$, respectively, so that
\begin{enumerate}[(i)]
\setcounter{enumi}{2}
\item
$a'_n-a_n<1/4$, $n=1,2,\dots$

\item
$b_0\leq\ep(a'_1)$ with $b_0 <\ep(a'_1)$ unless $\ep(x)$ is constant, and $b_n<\ep(a'_{n+1})$, $n=1,2,\dots$

\item
$b_{n-1}'(a'_{n}-a_{n})+\sum_{i=n}^\infty (3/4)b_i < b'_n$, $n=1,2,\dots$
\end{enumerate}
Apply Proposition \ref{p:approx.n.der.incr} with $n=0$ and $s=-1$ (the functions $f$ to which we apply the proposition can be taken to be the affine functions  having the correct values at the endpoints) to get $C^\infty$ functions $u_{0}\colon[a_{0},a_{1}]\to\R$, $u'_{n}\colon[a_{n},a'_{n}]\to\R$, $n\geq 1$,  $u_{n}\colon[a'_{n},a_{n+1}]\to\R$, $n\geq 1$, so that $D^ju_0(a_0)=\beta_j$, $j=0,1,\dots$, $u_0(a_1)=b'_0$, $u'_n(a_n)=b'_{n-1}$, $u'_n(a'_n)=b_n$, $u_n(a'_n)=b_{n}$, $u_n(a_{n+1})=b'_{n}$, and at each endpoint $c\not=a_0$ of the domain of a function $g$ of the form $u_0$, $u_n$, or $u'_n$,  $Dg(c)=-1$ and $D^jg(c)=0$, $j>1$, and when $x$ is not an endpoint, $Dg(x)<0$. The function $u=u_0\cup\bigcup_{n=1}^\infty (u_n\cup u'_n)$ satisfies (1) and (2), so there remains to verify (3).

We first verify the second inequality in (3). For $n\geq 2$ and $a'_{n-1}\leq x\leq a'_{n}$ we have $u(x)\leq b_{n-1}<\ep(a'_n)\leq \ep(x)$.
Similarly for $a_0\leq x\leq a'_1$ we get $u(x)<\ep(x)$: if $\ep(x)$ is not constant, then we have $u(x)\leq b_0<\ep(a'_1)\leq \ep(x)$, and if $\ep(x)=\ep_0$ is constant, then for $a_0<x\leq a_1'$ we have $u(x)<b_0\leq \ep_0 = \ep(x)$.

For the first inequality in (3), for $a'_n\leq x\leq a_{n+1}$, $n\geq 1$, we have
\begin{align*}
\int_x^\infty u(t)\,dt & \leq \sum_{i=n}^\infty \int_{a'_i}^{a'_{i+1}} u(t)\,dt \leq \sum_{i=n}^\infty \int_{a'_i}^{a'_{i+1}} b_i\,dt \\
& \leq \sum_{i=n}^\infty (3/4) b_i < b'_n \leq u(x)
\end{align*}
Similarly for $a_0\leq x\leq a_1$, letting the first integral in the first sum be $\int_{a_0}^{a'_{1}} u(t)\,dt$ (which is $<b_0(a'_1-a_0) < (3/4)b_0$), we get $\int_x^\infty u(t)\,dt<u(x)$.
For $a_n\leq x\leq a'_{n}$, $n\geq 1$, we have
\begin{align*}
\int_x^\infty u(t)\,dt & \leq \int_{a_n}^{a'_{n}} u(t)\,dt + \sum_{i=n}^\infty \int_{a'_i}^{a'_{i+1}} u(t)\,dt \\
& \leq b'_{n-1}(a'_n-a_n)+ \sum_{i=n}^\infty (3/4) b_i < b_n \leq u(x)\qedhere
\end{align*}
\end{proof}

\begin{rem}\label{r::u.drops.quickly.to.zero}
Write $T$ for the transformation on positive continuous functions $u\colon[0,\infty)\to \R$, together with the constant function on $[0,\infty)$ with value $\infty$, given by
\[
T(u)(x)=\int_x^\infty u(t)\,dt.
\]
Note that $T$ is monotone in the sense that if $u<v$ (i.e., $u(x)<v(x)$ for all $x$) and $\int_0^\infty v(t)\,dt<\infty$, then $T(u)<T(v)$. (3) gives $T(u)<u$ (and so $\int_0^\infty u(t)\,dt = T(u)(0)<u(0)<\infty$), and then by repeatedly applying $T$ to $T(u)<u$ we get that for all $k=0,1,\dots$,
\[
T^k(u)<T^{k-1}(u)<\dots<T^2(u)<T(u)<u<\ep.
\]
\end{rem}

\begin{lem}\label{l:approx.a.poly.on.platform}
Let $n$ be a nonnegative integer. Let $r,s\in\{-1,1\}$. Let $I=[0,\infty)$.
Let $\ep(x)$ be a positive continuous function on $rI$. Fix a positive number $b \leq \ep(0)$, with $b<\ep(0)$ unless $\ep(x)=\ep_0$ is constant. Let $\beta_j$, $j=0,1,\dots$ be a sequence of real numbers such that $\beta_0=-sr b$ and either $\beta_j=0$ for all $j>0$, or for the least $j>0$ with $\beta_j\not=0$ we have $sr^{j+1}\beta_j > 0$.
Then there is a $C^\infty$ function $g\colon rI\to\R$ such that
\begin{enumerate}
\item
$s D^{n+1}g(x)>0$, $x\in rI$, $x\not=0$.

\item
$D^{n+j}g(0)=\beta_j$, $j=0,1,\dots$

\item
$|D^jg(x)|<\ep(x)$, $x\in rI$, $x\not=0$, $j=0,\dots,n$.
\end{enumerate}
Now assume that $\lim_{x\to r\infty}\ep(x)=0$. From {\rm(1)} and {\rm(3)} we get {\rm(4)} and {\rm(5)} for $j=0,\dots,n$.
\begin{enumerate}
\setcounter{enumi}{3}
\item
$s (-r)^{n+j+1}D^{j}g(x) > 0$, for all $x\in rI$.

\item
If $1\leq j\leq n$, we have $D^{j-1}g(0) = -r\int_{rI} D^{j}g$.
\end{enumerate}
\end{lem}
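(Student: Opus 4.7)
The plan is to build $g$ by first defining $D^n g$ as a signed rescaling of the function $u$ from Lemma~\ref{l:u.drops.quickly.to.zero}, then obtaining $D^{n-1}g,\dots,g$ by iterated integration from $r\infty$ so that each vanishes there.

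First, translate the data for Lemma~\ref{l:u.drops.quickly.to.zero}: set $\tilde\ep(x)=\ep(rx)$ on $[0,\infty)$ and $\tilde\beta_j=-sr^{j+1}\beta_j$. A direct check gives $\tilde\beta_0=b$, and if $j>0$ is the least index with $\beta_j\neq 0$ then $\tilde\beta_j<0$ by the sign hypothesis on $\beta_j$. Lemma~\ref{l:u.drops.quickly.to.zero} then yields a positive $C^\infty$ function $u$ on $[0,\infty)$ with $Du<0$ on $(0,\infty)$, $D^j u(0)=\tilde\beta_j$, and $\int_x^\infty u<u(x)<\tilde\ep(x)$ (with equality at $x=0$ permitted only when $\ep$ is constant).

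Define $h(y)=-sr\,u(ry)$ on $rI$ and declare $D^n g = h$. For $k=n-1,\dots,0$, set recursively
\[
D^k g(y)=-\int_y^{r\infty} D^{k+1}g(t)\,dt,
\]
where $\int_y^{r\infty}$ denotes $\lim_{N\to\infty}\int_y^{rN}$. A short induction using the substitution $t=rs$ and the monotonicity of $T$ from Remark~\ref{r::u.drops.quickly.to.zero} gives $|D^k g(y)|\leq T^{n-k}(u)(ry)<\tilde\ep(ry)=\ep(y)$ for $y\neq 0$ and $k=0,\dots,n$, which is (3). Properties (1) and (2) are direct: $s D^{n+1}g(y)=-Du(ry)>0$ on $rI\setminus\{0\}$, and $D^{n+j}g(0)=D^j h(0)=-sr^{j+1}D^j u(0)=-sr^{j+1}\tilde\beta_j=\beta_j$.

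For the implication $(1),(3)\Rightarrow(4),(5)$ under $\lim_{x\to r\infty}\ep(x)=0$, condition (3) forces $D^j g(y)\to 0$ as $y\to r\infty$ for $j=0,\dots,n$. Property (4) follows by reverse induction on $j$: at $j=n$, (1) makes $sD^n g$ strictly monotone on $rI$ with value $-rb$ at $0$ and limit $0$ at $r\infty$, and a brief split on $r\in\{\pm 1\}$ gives $\mathrm{sign}(D^n g)=s(-r)^{2n+1}$ throughout $rI$; in the induction step, if $D^{j+1}g$ has constant sign $s(-r)^{n+j+2}$ on $rI$, then $D^j g$ is strictly monotone on $rI$ and tends to $0$ at $r\infty$, and the same case split yields $\mathrm{sign}(D^j g)=-r\cdot\mathrm{sign}(D^{j+1}g)=s(-r)^{n+j+1}$. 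Property (5) then follows from the fundamental theorem of calculus together with $D^{j-1}g(r\infty)=0$, giving $D^{j-1}g(0)=-\int_0^{r\infty}D^j g$, which equals $-r\int_{rI}D^j g$ in both cases $r=\pm 1$. The main bookkeeping difficulty is to track signs consistently across the substitution $t=rs$, the definition of $\tilde\beta_j$, the ``integrate from $r\infty$'' formula, and the exponent $(-r)^{n+j+1}$ in (4); once that is done the construction is routine.
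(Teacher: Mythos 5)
Your proposal is correct and follows essentially the same route as the paper: apply Lemma \ref{l:u.drops.quickly.to.zero} to a sign-adjusted data set, take $D^ng$ to be a signed copy of $u$, recover the lower derivatives by integrating from $r\infty$ (which is exactly the paper's $g=(-1)^{n+1}T^n(u)$ combined with its reduction to $r=s=1$), and derive (4) and (5) by reverse induction from strict monotonicity plus the decay of $\ep$. The only cosmetic difference is that you carry $r,s$ through the construction rather than normalizing to $r=s=1$ at the outset.
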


\begin{proof}
We can assume $r=s=1$. (Given that case, define $\bar{\ep}(x)=\ep(rx)$, $\bar{\beta}_j = sr^{j+1}\beta_j$, so that $\bar{\beta}_0 = sr \beta_0 = sr (-sr b) = -b$. Applying the case $r=s=1$ produces a function $\bar{g}$ on $I$ satisfying (1)--(3) (with $r=s=1$), and also (4), (5) when $\lim_{x\to r\infty}\ep(x)=0$. Then $g$ on $rI$ given by $g(x)=sr^n\bar{g}(rx)$ is as desired.)

To define $g$ satisfying (1), (2), (3), first use Lemma \ref{l:u.drops.quickly.to.zero} with $\ep(x)=b$ to get a positive $C^\infty$ function $u\colon[0,\infty)\to\R$ so that $Du(x)<0$, $x>0$, $u(0)=b$, $D^{j}u(0)=-\beta_j$, $j=1,2,\dots$, and
$T^i(u)(x) < \ep(x)$, $i=0,1,\dots,n$, where $T$ is the operator from Remark \ref{r::u.drops.quickly.to.zero}.
Then let
\[
g = (-1)^{n+1}T^{n}(u).
\]
Since for $T(u)(x)=\int_x^\infty u$ we have $D(T(u))=-u$, we get by induction on $j=0,\dots,n$ that
\[
D^jg = (-1)^{n+j+1}T^{n-j}(u).
\]
We have (3) by the choice of $u$, and (1) and (2) follow easily from the fact that $D^ng = -u$.

We now verify that (4) and (5) follow from (1) and (3) when $\lim_{x\to \infty}\ep(x)=0$. For (4), we verify by induction that for $j=0,\dots,n$,
\[
(-1)^{j+1}D^{n-j}g(x) > 0,\ x\geq 0.
\]
When $j=0$, this says that $D^ng(x) < 0$.
By (1), $D^ng$ is strictly increasing, so if there were a point $x_0\in[0,\infty)$ where $D^ng(x_0)\geq 0$, then at any point $x_1>x_0$, we have $D^ng(x_1) > 0$. Then for $x\geq x_1$ we have $\ep(x)>|D^ng(x)| = D^ng(x)\geq D^ng(x_1)$, so that $\lim_{x\to\infty}\ep(x)=0$ fails.

For the induction step, if $j<n$ and $(-1)^{j+1}D^{n-j}g(x) > 0$ when $x\geq 0$, then we want to show that
\[
(-1)^{j}D^{n-j-1}g(x) > 0,\ \ x\geq 0.
\]
Suppose this fails at a point $x_0$. By the induction hypothesis, the function
\begin{align*}
d(x)&=(-1)^{j}D^{n-j-1}g(x) \\
&=-(-1)^{j+1}D^{n-j-1}g(x)
\end{align*}
has a negative derivative and hence is strictly decreasing. From $d(x_0)\leq 0$, we get $d(x_1)<0$ for any $x_1>x_0$. Then for $x\geq x_1$ we have
\[
\ep(x) > |D^{n-j-1}g(x)| = |(-1)^{j}D^{n-j-1}g(x)| = -d(x)\geq -d(x_1)
\]
and hence $\lim_{x\to\infty}\ep(x)=0$ fails.
Next, we verify (5). For $j<n$ and $x\in I$, we have
\begin{align*}
\ep(x) & \geq |D^{n-j-1}g(x)| = (-1)^{j}D^{n-j-1}g(x) \\
& = (-1)^{j}D^{n-j-1}g(0) - \int_0^x (-1)^{j+1}D^{n-j}g(t)\,dt
\end{align*}
Since the right-hand sides are nonnegative, the (nonnegative) integrals
\[
\int_0^x (-1)^{j+1}D^{n-j}g(t)\,dt
\]
are bounded above by $(-1)^{j}D^{n-j-1}g(0)$, and taking $\lim_{x\to\infty}$ gives
\[
(-1)^{j}D^{n-j-1}g(0) = \int_0^\infty (-1)^{j+1}D^{n-j}g(t)\,dt
\]
which simplifies to
\[
D^{n-j-1}g(0) = - \int_0^\infty D^{n-j}g(t)\,dt.
\]
Thus, (5) holds.
\end{proof}

\begin{thm}\label{t:approx.a.poly.on.platform}
Let $f$ be a polynomial with $D^nf$ having constant value $k$, for some nonnegative integer $n$. Let $r,s\in\{-1,1\}$. Let $a\in\R$, $I=[a,\infty)$.
Let $\ep(x)$ be a positive continuous function on $rI$. Fix a positive number $b \leq \ep(ra)$, with $b<\ep(ra)$ unless $\ep(x)=\ep_0$ is constant. Let $\beta_j$, $j=0,1,\dots$ be a sequence of real numbers such that $\beta_0=k-sr b$ and either $\beta_j=0$ for all $j>0$, or for the least $j>0$ with $\beta_j\not=0$ we have $sr^{j+1}\beta_j > 0$.
Then there is a $C^\infty$ function $g\colon rI\to\R$ such that
\begin{enumerate}
\item\label{3.4.1}
$s D^{n+1}g(x)>0$, $x\in rI$, $x\not=ra$.

\item\label{3.4.2}
$D^{n+j}g(ra)=\beta_j$, $j=0,1,\dots$

\item\label{3.4.3}
$|D^jg(x)-D^jf(x)|<\ep(x)$, $x\in rI$, $x\not=ra$, $j=0,\dots,n$.
\end{enumerate}
Now assume that $\lim_{x\to r\infty}\ep(x)=0$. From {\rm(1)} and {\rm(3)} we get {\rm(4)} and {\rm(5)} for $j=0,\dots,n$.
\begin{enumerate}
\setcounter{enumi}{3}
\item\label{3.4.4}
$s (-r)^{n+j+1}D^{j}g(x) > s(-r)^{n+j+1}D^{j}f(x)$, for all $x\in rI$.

\item\label{3.4.5}
If $1\leq j\leq n$, we have $D^{j-1}g(ra)-D^{j-1}f(ra) = -r\int_{rI} (D^{j}g-D^{j}f)$.
\end{enumerate}
If $D^jf$, $j=0,\dots,n$, are all monotone on $rI$, and those which are not constant are nonzero at $ra$, then we can ask moreover that
\begin{enumerate}
\setcounter{enumi}{5}
\item\label{3.4.6}
$D^jg(x)\not=0$, $x\in rI$, $j=1,\dots,n$.
\end{enumerate}
\end{thm}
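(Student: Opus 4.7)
The plan is to deduce the theorem from Lemma \ref{l:approx.a.poly.on.platform}, which handles the case $f\equiv k$ constant with endpoint $0$, by translating the endpoint from $ra$ to $0$ and writing $g$ as $f$ plus the solution supplied by that lemma. Set $\tilde\beta_0=-srb$, $\tilde\beta_j=\beta_j$ for $j\ge 1$, and $\tilde\ep(y)=\ep(y+ra)$ on $rI_0:=r[0,\infty)$. The sign condition of Lemma \ref{l:approx.a.poly.on.platform} on $(\tilde\beta_j)$ matches the one imposed on $(\beta_j)$ since both only constrain indices $j>0$, and $\tilde\ep(0)=\ep(ra)\ge b$ with strict inequality unless $\ep$ is constant. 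Apply the lemma to produce $\tilde g\colon rI_0\to\R$ and set
\[
g(x)=\tilde g(x-ra)+f(x),\qquad x\in rI.
\]

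Clauses (1)--(3) of the theorem then follow from the corresponding clauses of the lemma together with $D^{n+1}f\equiv 0$, $D^nf(ra)=k$, and $D^{n+j}f(ra)=0$ for $j\ge 1$: for instance, $D^ng(ra)=\tilde\beta_0+k=\beta_0$, and for $x\ne ra$,
\[
|D^jg(x)-D^jf(x)|=|D^j\tilde g(x-ra)|<\tilde\ep(x-ra)=\ep(x).
\]
When $\lim_{x\to r\infty}\ep(x)=0$, $\tilde\ep$ satisfies the analogous limit, so clauses (4) and (5) of the lemma apply to $\tilde g$ and convert to (4) and (5) of the theorem via the substitutions $D^j\tilde g=D^jg-D^jf$ and $y=x-ra$ in the integral.

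The main obstacle will be clause (6). Let $m=\deg f$ and $t$ be the sign of the leading coefficient of $f$. By Proposition \ref{p:monotone.der.sign}(2), $D^jf$ has the definite sign $r^{m-j}t$ on $rI\sm\{ra\}$ for $1\le j\le m$, and under the hypothesis of (6) the nonvanishing of nonconstant $D^jf$ at $ra$ extends this sign to all of $rI$ for $j<m$; meanwhile $D^jf\equiv 0$ for $m<j\le n$. The specific $\tilde g$ built in the proof of Lemma \ref{l:approx.a.poly.on.platform} as an iterated integral of a positive function $u$ has $D^j\tilde g$ of the definite sign $s(-r)^{n+j+1}$ throughout $rI_0$, including the endpoint $0$ (where $T^{n-j}(u)(0)>0$ because $u>0$). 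Consequently $D^jg=D^j\tilde g(\cdot-ra)\ne 0$ for $m<j\le n$. For $1\le j\le m$, where $D^jg=D^j\tilde g(\cdot-ra)+D^jf$, the nonvanishing reduces to a sign/magnitude check: if the two signs agree the sum is automatically nonzero, while if they disagree one must exploit the remaining freedom in the auxiliary function $u$ (only $u(0)=b$ and its derivative values at $0$ are prescribed) to arrange $|D^j\tilde g|<|D^jf|$ on all of $rI_0$. The strict positivity of $|D^jf|$ on $rI$ guaranteed by the nonvanishing hypothesis of (6) is what makes this last adjustment possible.
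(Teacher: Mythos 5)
Your proposal matches the paper's proof for clauses (1)--(5) almost verbatim: both translate the origin to $ra$, apply Lemma~\ref{l:approx.a.poly.on.platform} to produce $g_1=\tilde g$, and set $g=f+\tilde g(\cdot-ra)$, with (4) and (5) falling out by rewriting the lemma's conclusions in terms of $D^jg-D^jf$. For clause (6) you have the right idea but leave the mechanism vaguer than necessary: rather than splitting into cases by whether the signs of $D^j\tilde g(\cdot-ra)$ and $D^jf$ agree and then ``exploiting the remaining freedom in $u$'' to shrink $|D^j\tilde g|$, the paper simply observes that, for nonzero $f$ of degree $m\le n$, one may run (1)--(5) with the positive continuous function $\ep$ replaced by anything smaller than $\min\{|D^jf(x)|:1\le j\le m\}$ on $rI$; then clause (3) alone gives $|D^jg(x)-D^jf(x)|<|D^jf(x)|$, forcing $D^jg(x)$ to share the sign of $D^jf(x)$ and hence be nonzero for $1\le j\le m$, while $m<j\le n$ is handled exactly as you say via (4). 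This one-line shrinkage of $\ep$ handles both of your sign cases uniformly (and makes clear why the hypothesis that nonconstant derivatives be nonzero at $ra$ is needed: it is what makes $\min\{|D^jf(x)|\}$ a positive continuous function). With that tightening, the two proofs coincide.
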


\begin{proof}
With the given values of $r,s,b$ and defining $\ep_1(x)=\ep(x-ra)$ on $r[0,\infty)$, get a $C^\infty$ function $g_1\colon r[0,\infty)\to\R$ from Lemma \ref{l:approx.a.poly.on.platform}. Then $g\colon rI\to\R$ given by $g(x) = f(x) + g_1(x-ra)$ is $C^\infty$ since $f$ is a polynomial. We have $D^ng(x) = D^nf(x) + D^ng_1(x-ra) = k + D^ng_1(x-ra)$ and hence $D^ng(ra) = k + D^ng_1(0) = k - srb = \beta_0$. When $j>0$, we have $D^{n+j}g(x) = D^{n+j}g_1(x-ra)$, so (2) follows from clause (2) for $g_1$ in Lemma \ref{l:approx.a.poly.on.platform}. Clause (3) follows immediately from the corresponding property for $g_1$ in Lemma \ref{l:approx.a.poly.on.platform}, as do (4) and (5) by rephrasing them as statements about the difference $g(x) - f(x) = g_1(x-ra)$.

There remains to show that (6) can be arranged when $D^jf$, $j=0,\dots,n$, are all monotone on $rI$, and those which are not constant are nonzero at $ra$.
If $f=0$, (4) gives immediately $D^jg(x)\not=0$, $x\in rI$, $j=0,\dots,n$, for any $g$ obtained from the theorem. Assume now that $f$ is nonzero with degree $m\leq n$. The derivatives which are not constant are strictly monotone on $rI$, and for $j=1,\dots,m$, $D^jf$ has no zeros on $rI\sm\{ra\}$ (Proposition \ref{p:monotone.der.sign}). By assumption, $D^jf(ra)\not=0$ for all $j=1,\dots,m$.
Choose $\ep(x)<\min\{|D^jf(x)|:j=1,\dots,m\}$, $x\in rI$, and get $g$ from the theorem. From (3) we see that for $j=1,\dots,m$, $D^jg(x)$ has the same sign as $D^jf(x)$ for all $x\in rI$, in particular, $D^jg(x)\not=0$. For $m<j\leq n$, we have $D^jf(x)=0$ and hence from (4) we get that $D^jg(x)\not=0$, $x\in rI$.
\end{proof}

The following remark shows the need for assuming in (6) that nonconstant derivatives are nonzero at $ra$ but also shows that the assumption can be weakened to say that we need this assumption for every second derivative.

\begin{rem}\label{r:0}
(a) For $a=0$ and $n=3$, and with $r=s=1$, consider $f(x)=1+x^3$ on $[0,\infty)$, with any continuous positive $\ep(x)$ so that $\lim_{x\to\infty}\ep(x)=0$. The functions $D^jf$, $j=0,1,2,3$, are all monotone on $[0,\infty)$. The values $f(0)$ and $D^3f(0)$ are nonzero, however, $Df(0)=D^2f(0)=0$. An approximation $g$ as in the theorem above (taking any $b$ such that $0<b<\ep(0)$) will have $Dg(0)<0$ by (4) but $Dg(x)$ is eventually positive by (3), so $Dg$ has a zero on $(0,\infty)$.

(b) Suppose that $f$ has degree $m\leq n$ and that the derivatives $D^jf$, $j=0,\dots,m$, are all monotone on $rI$. The derivatives of order $<m$ are strictly monotone, and those of order $1,\dots,m$ have no zero on $rI$ except possibly at $ra$.
If $t\in\{1,-1\}$ is the sign of the coefficient of the term of highest degree in $f$, then the derivatives $D^jf$, $j=1,\dots,m$, have constant sign $r^{m-j}t$ on $rI\sm\{ra\}$ (Proposition \ref{p:monotone.der.sign}).
From (4) we get $s (-r)^{n-j}D^{j}g(x)<s (-r)^{n-j}D^{j}f(x)$, for all $x\in rI$. The sign of the right hand side, when $x\not=ra$, is $s (-r)^{n-j} r^{m-j}t$, which alternates between $1$ and $-1$. For those $j=1,\dots,m$ for which it equals $-1$, we have $s (-r)^{n-j}D^{j}g(x)<s (-r)^{n-j}D^{j}f(x)\leq 0$ and hence $D^jg(x)\not=0$, regardless of whether $D^jf(ra)$ is zero. Letting
\[
S=\{j\in\{1,\dots,m\}:s (-r)^{n-j}r^{m-j}t = 1\},
\]
we have that as long as $D^jf(ra)\not=0$ when $j\in S$, we can find $g$ satisfying (6). The proof is as for (6), but taking $\ep(x)<\min\{|D^jf(x)|:j\in S\}$.

(c) In the example in part (a),
$s (-r)^{n-j}r^{m-j}t = (-1)^{3-j}$, which for $j=1,2,3$ equals $-1$ when $j=2$. Hence, in that example, we would have $D^2g(x)\not=0$ on $[0,\infty)$ even though $D^2f(0)=0$.

(d) Taking $a=0$ and $n=2$, with $r=s=1$, consider $f(x)=1+x^2$ on $[0,\infty)$, with any continuous positive $\ep(x)$ so that $\lim_{x\to\infty}\ep(x)=0$. The functions $D^jf$, $j=0,1,2$, are all monotone on $[0,\infty)$. The values $f(0)$ and $D^2f(0)$ are nonzero, and $Df(0)=0$.
We have $s (-r)^{n-j}r^{m-j}t = (-1)^{2-j}$, which equals $-1$ when $j=1$. Hence, we get $Dg(x)\not=0$ on $[0,\infty)$ even though $Df(0)=0$.
\end{rem}

\begin{thm}\label{t:approx.a.poly.on.platform.2}
Let $f$ be a polynomial with $D^nf$ having constant value $k$, for some nonnegative integer $n$. Let $r,s\in\{-1,1\}$. Let $a\in\R$, $I=[a,\infty)$.
Let $\ep>0$. Let $\beta_j$, $j=1,2,\dots$ be a sequence of real numbers such either $\beta_j=0$ for all $j=1,2,\dots$, or for the least $j$ with $\beta_j\not=0$ we have $sr^{j+1}\beta_j > 0$.
Then there is a $C^\infty$ function $h\colon rI\to\R$ such that
\begin{enumerate}
\item\label{c.1}
$s D^{n+1}h(x)>0$, $x\in rI$, $x\not=ra$.

\item\label{c.2}
$D^{n+j}h(ra)=\beta_j$, $j=1,2,\dots$

\item\label{c.3}
$D^jh(ra) = D^jf(ra)$, $j=0,\dots,n$.

\item\label{c.4}
$|D^nh(x)-D^nf(x)| = |D^nh(x)-k|<\ep$, $x\in rI$.
\end{enumerate}
If $D^jf$ is monotone on $rI$ for each $j=0,\dots,n$ then there is a choice of $s\in\{1,-1\}$ such that for any $C^\infty$ function $h\colon rI\to\R$ satisfying \ref{c.1} and \ref{c.3} also satisfies for each $j=0,\dots,n$,
\begin{enumerate}
\setcounter{enumi}{4}
\item\label{c.5}
$D^{j+1}h(x)\not=0$ for all $x\in rI\sm\{ra\}$.

\item\label{c.6}
$D^jh$ has the same monotonicity as $D^jf$ on $rI$.
\end{enumerate}
\end{thm}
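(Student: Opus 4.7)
My plan is to handle \ref{c.1}--\ref{c.4} by a standard reduction to $r=s=1$, $a=0$ (via $\bar h(y)=sr^{n+1}h(ra+ry)$) together with a flipped application of Lemma \ref{l:u.drops.quickly.to.zero}, and then handle the second half by a downward sign-chasing induction after a specific choice of $s$. In the reduced setting, since $D^nf\equiv k$ and $D^jf=0$ for $j>n$, $f$ coincides with its own degree-$n$ Taylor polynomial at $0$; writing $h=f+R$ reduces \ref{c.1}--\ref{c.4} to finding a $C^\infty$ correction $R\colon[0,\infty)\to\R$ satisfying $D^jR(0)=0$ for $j=0,\dots,n$, $D^{n+j}R(0)=\beta_j$ for $j\geq 1$, $D^{n+1}R>0$ on $(0,\infty)$, and $|D^nR|<\ep$. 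Setting $w:=D^nR$, what is needed is a strictly increasing $C^\infty$ function $w$ on $[0,\infty)$ with $w(0)=0$, $D^jw(0)=\beta_j$ for $j\geq 1$, and $0\leq w<\ep$; then $R$ is recovered by integrating $w$ a total of $n$ times from $0$ with zero intermediate initial values.

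To produce $w$, I apply Lemma \ref{l:u.drops.quickly.to.zero} with constant bound $\tilde\ep\equiv\ep/2$, $\tilde\beta_0=\ep/2$, and $\tilde\beta_j=-\beta_j$ for $j\geq 1$; the hypothesis that the first nonzero $\beta_j$ $(j>0)$ is positive in the reduced problem is exactly the lemma's requirement that the first nonzero $\tilde\beta_j$ be negative, and the allowance $\tilde\beta_0=\tilde\ep_0$ in the lemma covers the boundary equality. The lemma returns a positive strictly decreasing $\tilde v$ with $\tilde v(0)=\ep/2$ and $\tilde v<\ep/2$ on $(0,\infty)$, so $w:=\ep/2-\tilde v$ satisfies everything, completing the existence part.

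For the second half, assume $D^jf$ is monotone on $rI$ for $j=0,\dots,n$. The case $f=0$ is trivial (any $s$ works), so take $f$ nonzero of degree $m\leq n$ with leading coefficient of sign $t\in\{1,-1\}$, and choose $s:=r^{n+1-m}t$. For any $h$ satisfying \ref{c.1} and \ref{c.3}, I prove by downward induction on $j=n+1,n,\dots,1$ that $D^jh$ has constant sign $\sigma_j:=r^{m-j}t$ on $rI\sm\{ra\}$ (using $r^{\pm 2}=1$, so any exponent is well-defined). The base case $\sigma_{n+1}=r^{m-n-1}t=r^{n+1-m}t=s$ is exactly \ref{c.1}. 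For the step, $D^{j-1}h(x)=D^{j-1}f(ra)+\int_{ra}^x D^jh(t)\,dt$: the integral has sign $r\sigma_j=r^{m-(j-1)}t=\sigma_{j-1}$, and by Proposition \ref{p:monotone.der.sign} together with \ref{c.3}, the endpoint value $D^{j-1}f(ra)$ is either $0$ or of sign $\sigma_{j-1}$, so $D^{j-1}h$ has constant sign $\sigma_{j-1}$ on $rI\sm\{ra\}$.

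The conclusion \ref{c.5} is immediate from this induction. For \ref{c.6}, the constant sign $\sigma_{j+1}$ of $D^{j+1}h$ on $rI\sm\{ra\}$ and the compatible value $D^{j+1}h(ra)=D^{j+1}f(ra)$ at the endpoint force $D^jh$ to be monotone on $rI$, and the direction is that given by $\sigma_{j+1}$, which Proposition \ref{p:monotone.der.sign} identifies with the sign of $D^{j+1}f$ on $rI\sm\{ra\}$, i.e., with the direction of monotonicity of $D^jf$. The main obstacle is the parity bookkeeping: the specific choice $s=r^{n+1-m}t$ is forced, as it is the unique value that makes the sign cascade from $\sigma_{n+1}=s$ consistent at every level with the endpoint signs of $D^jf(ra)$ predetermined by Proposition \ref{p:monotone.der.sign}.
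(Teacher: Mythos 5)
Your proof is correct, but your route to the existence half is genuinely different from (and more economical than) the paper's. The paper first invokes Theorem \ref{t:approx.a.poly.on.platform} with constant tolerance $\ep/2$ to get a function $g$ satisfying the derivative conditions at $ra$ of order $>n$ together with $D^ng(ra)=k-srb$, and then corrects $g$ by adding the degree-$\leq n$ polynomial $\sum_{j=0}^n (D^j(f-g)(ra)/j!)(x-ra)^j$ to force $D^jh(ra)=D^jf(ra)$ for $j\leq n$; this detours through the iterated-integral operator $T$ of Remark \ref{r::u.drops.quickly.to.zero} and the $\lim_{x\to r\infty}\ep(x)=0$ machinery, none of which is actually needed here because \ref{c.4} only constrains the $n$th derivative. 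You instead normalize to $r=s=1$, $a=0$, write $h=f+R$, reduce to producing $w=D^nR$ with $w(0)=0$, $D^jw(0)=\beta_j$ ($j\geq1$), $Dw>0$ on $(0,\infty)$ and $0\leq w<\ep$, and obtain $w=\ep/2-\tilde v$ directly from Lemma \ref{l:u.drops.quickly.to.zero} (using its boundary case $\tilde\beta_0=\tilde\ep_0$), then integrate up $n$ times with zero initial data. This bypasses Theorem \ref{t:approx.a.poly.on.platform} entirely and is a cleaner path to \ref{c.1}--\ref{c.4}. For the second half your argument is essentially identical to the paper's Claim \ref{claim:Djh.sign}: same choice $s=r^{n+1-m}t$, same downward sign-propagation $D^{j-1}h(x)=D^{j-1}f(ra)+\int_{ra}^{x}D^jh$, same appeal to Proposition \ref{p:monotone.der.sign}. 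Two small points you gloss over: the case $f=0$ is not quite covered by your induction as stated (there is no $m$ or $t$), though the identical sign cascade with arbitrary $s$ and $\sigma_j=r^{n+1-j}s$ handles it, which is exactly what the paper writes out; and in \ref{c.6}, for $j\geq m$ the sign of $D^{j+1}f$ is zero rather than $\sigma_{j+1}$, but there $D^jf$ is constant so the same-monotonicity claim is automatic once $D^jh$ is monotone. Neither affects correctness.
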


\begin{proof}
Apply Theorem \ref{t:approx.a.poly.on.platform} with $\ep(x)$ replaced by the constant $\ep/2$ and taking $b$ so that $0<b<\ep/2$. This yields a $C^\infty$ function $g\colon rI\to\R$ such that
\begin{enumerate}[(a)]
\item
$s D^{n+1}g(x)>0$, $x\in rI$, $x\not=ra$.

\item
$D^{n+j}g(ra)=\beta_j$, $j=0,1,2,\dots$ (where $\beta_0=k-srb$).

\item
$|D^ng(x)-D^nf(x)| = |D^ng(x)-k|<\ep/2$, $x\in rI$, $x\not=ra$.
\end{enumerate}
Define $h(x) = g(x) + \sum_{j=0}^n (D^j(f-g)(ra)/j!)(x-ra)^j$.
For $j=0,\dots,n$ we have
\[
D^jh(ra) = D^jg(ra) + D^j(f-g)(ra) = D^jf(ra).
\]
so that \ref{c.3} holds, and for $j > n$, $D^jh(x) = D^jg(x)$, so that (a) and (b) yield \ref{c.1} and \ref{c.2}. For \ref{c.4}, we have $D^nh(ra) = D^nf(ra)$, so \ref{c.4} holds for $x=ra$, and if $x\in rI$ is not equal to $ra$, then using (c) we have
\begin{align*}
|D^nh(x) - D^nf(x)| & = |D^ng(x) + D^n(f-g)(ra) - k| \\
& = |D^ng(x) - D^ng(ra)| = |D^ng(x) - \beta_0| \\
& = |D^ng(x) - k + srb| \leq |D^ng(x) - k| + b < \ep
\end{align*}

For the last part of the theorem, suppose that each $D^jf$, $j=0,\dots,n$, is monotone on $rI$ and let $h\colon rI\to \R$ be a $C^\infty$ function satisfying \ref{c.1} and \ref{c.3}.

When $f=0$, we only need to verify that for each $j=0,\dots,n$, $D^{j+1}h$ is nonzero on $rI\sm\{ra\}$ since that implies that $D^{j}h$ is monotone on $rI$. In this case $s\in\{1,-1\}$ can be chosen arbitrarily and we verify that
for $j=1,\dots,n+1$, $D^jh$ has sign $r^{n-j+1}s$ on $rI\sm\{ra\}$. This holds for $j=n+1$ by \ref{c.1}, and if $0\leq j\leq n$ and it holds for $j+1$, then since $D^jh(ra)=0$ by \ref{c.3}, for $x\in rI\sm\{ra\}$ we have
\[
D^jh(rx) = \int_{ra}^{rx} D^{j+1}h(t)\,dt = r\int_{r[a,x)} D^{j+1}h
\]
which has sign $r(r^{n-j}s) = r^{n-j+1}s$. Thus, for $j=0,\dots n$, $D^{j+1}h$ is nonzero on $rI\sm\{ra\}$.

Now assume that $f$ is nonzero of degree $m\leq n$ with its coefficient of $x^m$ having sign $t$.
Let $s=r^{n-m+1}t$.

\begin{claim}\label{claim:Djh.sign}
For $j=1,\dots,n+1$, $D^jh(x)$ has sign $r^{j-m}t$ $(=r^{m-j}t)$ on $rI\sm\{ra\}$.
\end{claim}

\begin{proof}
When $j=n+1$, this is true by \ref{c.1}. If $m < j\leq n$ and the claim holds for $j+1$, then proceed as in the case $f=0$ to show that the claim holds for $j$ as well.  
If $1\leq j\leq m$ and the claim holds for $j+1$, then for $x\in I$, $x>a$, we have
\[
D^jh(rx) = D^jh(ra) + \int_{ra}^{rx} D^{j+1}h(t)\,dt = D^jf(ra) + r\int_{r[a,x)} D^{j+1}h.
\]
The term $D^jf(ra)$ is either zero or of sign $r^{m-j}t$ (Proposition \ref{p:monotone.der.sign}), and the second term is of sign $r(r^{m-j-1}t) = r^{m-j}t$, so $D^jh(rx)$ has sign $r^{m-j}t$, as desired.
\end{proof}

Property \ref{c.5} follows directly from Claim \ref{claim:Djh.sign}. For \ref{c.6}, given $j=0,\dots,n$, if $j\geq m$ then $D^jf$ is constant on $rI$ and $D^jh$ is monotone by \ref{c.5}, so \ref{c.6} follows.
If $0\leq j<m$, then $D^{j+1}f$ and $D^{j+1}h$ have the same sign on $rI\sm\{ra\}$ by Proposition \ref{p:monotone.der.sign} and \ref{claim:Djh.sign}, and again \ref{c.6} follows.
\end{proof}

\section{Simultaneous comonotone approximation of a $C^n$ function and its derivatives}
\label{s:Simultaneous comonotone approximation of a $C^n$ function and its derivatives}

In this section, we prove Theorem \ref{t:C.n.Hoischen.C.n.fcts} which was stated in the introduction. This theorem makes use of a common witnessing set for the piecewise monotonicity of $D^jf$, $j=0,\dots,n$, which has no more than one point on any platform of $D^nf$, and hence (by Corollary \ref{c:platforms.of.derivatives}) has no more than one point on any platform of $D^if$ for $i\leq n$.
The following proposition shows (and it is easy to see that) there is such a set when the platforms of $D^nf$ are singletons, or equivalently, there are no intervals on which $f$ is the zero function or a polynomial of degree $\leq n$. The proposition also shows that there is such a witnessing set when $n\leq 2$.
Note that some assumption is needed because if on some interval $I$, $f$ is a polynomial of degree $n\geq 3$, then $f$ could have more than one turning point on $I$ which is contained in a platform of $D^nf$.

\begin{prop}\label{p:common.witness.1}
Let $I$ be a nontrivial interval of $\R$ and let $f\colon I\to\R$ be a $C^n$ function for which $D^nf$ is piecewise monotone.
\begin{enumerate}
\item
If $D^nf$ has singleton platforms, then there is a set $E$ which is a witness to the piecewise monotonicity of each of $D^jf$, $j=0,\dots,n$.

\item
If $n\leq 2$, then there is a set $E$ which is a witness to the piecewise monotonicity of $D^jf$ for $j\leq n$ and has at most one point on any platform for $D^nf$.
\end{enumerate}
\end{prop}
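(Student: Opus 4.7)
The plan is to treat (1) and (2) separately, since (1) only needs a common witness while (2) adds the one-point-per-platform constraint. For (1), I would apply Corollary~\ref{c:platforms.of.derivatives}: the hypothesis that every platform of $D^nf$ is a singleton forces $A=\{0,\dots,n\}$ at every point of $I$, so every platform of every $D^jf$ ($j\le n$) is also a singleton. Consequently each turning platform of $D^jf$ is a singleton, so by Proposition~\ref{p:piecewise.monotone.characterization}(1) the set $T_j$ of turning points of $D^jf$ is closed discrete in $I$, and by Proposition~\ref{witnessing set}(a) it is a minimal witness for the piecewise monotonicity of $D^jf$. Then $E=T_0\cup\dots\cup T_n$ is a finite union of closed discrete sets, hence closed discrete, and any component of $I\sm E$ misses each $T_j$, so is contained in a component of $I\sm T_j$, on which $D^jf$ is monotone.

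For (2) with $n=0$, applying Proposition~\ref{witnessing set}(b) to any witness of $f$ produces a $\sq$-minimal sub-witness with exactly one point per turning platform of $f$, and distinct turning platforms are disjoint. For $n\in\{1,2\}$ the strategy is to build $E$ by placing at most one point in each platform of $D^nf$, chosen so as to simultaneously witness every turning platform of every $D^jf$ ($j\le n$) contained in that platform. Corollary~\ref{c:platforms.of.derivatives} guarantees that every turning platform of any $D^jf$ sits inside a (unique) platform of $D^nf$, so what needs to be verified is that the turning platforms inside a fixed platform $P$ of $D^nf$ have a common point. For singleton $P=\{p\}$ the claim is immediate. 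For positive-length $P$, the restriction of $f$ to $P$ is a polynomial of degree $\le n$ (since $D^nf$ is constant on $P$), so the zeros and turning platforms of $Df,\dots,D^{n-1}f,f$ inside $P$ are very constrained. The case $n=1$ is short: if $Df|_P$ is a nonzero constant then $f$ is strictly monotone on $P$ and continuity of $Df$ forbids singleton turning platforms of $f$ at the endpoints of $P$; if $Df\equiv 0$ on $P$ then the maximality argument used in Proposition~\ref{p:platforms.of.derivatives} shows the platform of $f$ through $P$ is exactly $P$.

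The main obstacle is the $n=2$ case, which is a longer case analysis on the constants $c=D^2f|_P$ and, when $c=0$, $d=Df|_P$. The key observation is that continuity of $D^2f$ at the endpoints of $P$ forces $Df$ to be strictly monotone in a one-sided neighborhood of each endpoint when $c\ne 0$, ruling out singleton turning platforms of $Df$ at the boundary of $P$; similarly, continuity of $Df$ rules out singleton turning platforms of $f$ at any endpoint of $P$ where $Df$ is nonzero. With these configurations excluded, one checks in each subcase that the turning platforms of $f$, $Df$, and $D^2f$ contained in $P$ either all equal $P$ itself or all reduce to a single point (the unique common zero on $P$ of whichever of these derivatives vanishes there), so that a single point of $P$ suffices to witness all of them. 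Closed discreteness of the resulting $E$ then follows from Proposition~\ref{p:piecewise.monotone.characterization}(1): any accumulation point of $E$ would be an accumulation point of the turning platforms housing its members, but the union of the discrete collections of turning platforms of $f$, $Df$, and $D^2f$ is still discrete.
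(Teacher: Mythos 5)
Your proposal is correct and follows essentially the same route as the paper's: part (1) is the paper's argument verbatim, and part (2) rests on the same key facts (Corollary \ref{c:platforms.of.derivatives}, plus the observation that $f$ restricted to a positive-length platform $P$ of $D^nf$ is a polynomial of degree $\le n$, followed by a case analysis on its coefficients); the paper merely organizes the construction incrementally --- adding points for the turning platforms of $f$, then $Df$, then $D^2f$, and checking a posteriori that no platform of $D^nf$ receives two points --- rather than selecting one common point per platform of $D^nf$ as you do. One small imprecision: when $D^2f$ is a nonzero constant on $P$, the turning platforms contained in $P$ need not ``all equal $P$ or all reduce to a single point'' (the vertex $\{x_0\}$ of the quadratic can be a turning platform of $f$ while $P$ itself is simultaneously a turning platform of $D^2f$), but since $x_0\in P$ a common point still exists, which is all your argument actually uses.
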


    Recall from Proposition \ref{p:f.not.piecewise.monotone.implies.Df.also} that piecewise monotonicity of $D^nf$ implies the piecewise monotonicity of $D^if$ for all $i\leq n$.

\begin{proof}
(1) By Corollary \ref{c:platforms.of.derivatives}, the platforms of all $D^jf$, $j=0,\dots,n$, are singletons. The set $E$ of all turning points of all these derivatives has the desired property by Proposition \ref{witnessing set}.

(2) For $n=0$ this is trivial. Suppose $n=1$.
Let $E_0$ be a set consisting of one point chosen from each turning platform of $f$. Now get a set $E\sq I$ from $E_0$ by adding to $E_0$ one point from each turning platform for $Df$ which is disjoint from $E_0$.
We must check that if $P$ is a platform for $Df$ of positive length, then $E\cap P$ has at most one point. On the interval $P$, $f$ has a constant derivative, so $f(x)=ax+b$ for some $a,b\in\R$. If $a\not=0$, then no points of $P$ are turning points for $f$, so $E_0\cap P=\e$ and hence $E\cap P$ has at most one point by definition of $E$. If $a=0$, then $f$ is constant on $P$, so $P$ is contained in a platform of $f$. By Corollary \ref{c:platforms.of.derivatives}, $P$ is in fact a platform of $f$. Either $E_0\cap P$ has exactly one point, in which case $E\cap P = E_0\cap P$, or $E_0\cap P$ is empty, in which case $E\cap P$ has at most one point.

Now suppose $n=2$. As in the case $n=1$, let $E_0$ be a set consisting of one point chosen from each turning platform of $f$. Let $E_1\sq I$ be obtained from $E_0$ by adding to $E_0$ one point from each turning platform for $Df$ which is disjoint from $E_0$.
Then let $E\sq I$ be obtained from $E_1$ by adding to $E_1$ one point from each turning platform for $D^2f$ which is disjoint from $E_1$.
We must check that if $P$ is a platform of positive length for $D^2f$, then $E\cap P$ has at most one point. On the interval $P$, $D^2f$ is constant, so  $f(x)=ax^2+bx+c$ for some $a,b,c\in \R$. If $a\not=0$, then there is at most one turning point for $f$ in $P$, and no turning points for $Df(x)=2ax+b$, so $E_1\cap P=E_0\cap P$. If this set is a singleton, then by definition of $E$, it equals $E\cap P$. If this set is empty, then again by definition of $E$, $E\cap P$ is at most a singleton. If $a=0$, $b\not=0$, then $Df(x)=b\not=0$ on $P$, so there are no turning points for $f$ in $P$ and all points of $P$ are on the same platform of $Df$, so $E_1\cap P$ has at most one point and therefore $E\cap P$ has at most one point.
If $a=b=0$, then $f$ is constant on $P$, so $P$ is a platform of all of $f$, $Df$, $D^2f$ by Corollary \ref{c:platforms.of.derivatives}. If $E_0\cap P$ is not empty, then $E_1\cap P=E_0\cap P$ and then $E\cap P=E_1\cap P$. If $E_0\cap P=\e$, then either $E_1\cap P=\e$ in which case $E\cap P$ has at most one point, or $E_1\cap P$ is a singleton, in which case $E\cap P=E_1\cap P$.
\end{proof}

Recall from the introduction that in the statement of Theorem \ref{t:C.n.Hoischen.C.n.fcts}, the sets $P^{D^nf}_{\min}$ and $P^{D^nf}_{\max}$ are denoted $P^{n}_{\min}$ and $P^{n}_{\max}$, respectively.
We also set $e_{\min} = \inf E$, $e_{\max} = \sup E$, and $W_n = \{x\in\R:
e_{\max}\in P^n_{\max}$ and $x>e_{\max}$, or $e_{\min}\in P^n_{\min}$ and $x<e_{\min}\}$.

\begin{thm}
\label{t:C.n.Hoischen.C.n.fcts}
Assume $(Q_n)$. Let $n$ be a nonnegative integer.  Let $f\colon \R\to\R$ be a $C^n$ function such that $D^nf$ is piecewise monotone and nonconstant. Let $E\sq \R$ be a closed discrete set which has no more than one point on any platform of $D^nf$, and has a point on each turning platform of $D^jf$, $j=0,\dots,n$. Let $\ep\colon \R\to\R$ be a positive continuous function and let $\ep_0>0$.
Then there is a function $g\colon \R\to\R$ which is the restriction of an entire function and satisfies the following conditions for $x\in \R$.
\begin{enumerate}
\item\label{p:C.n.Hoischen:5}
$D^jg(x)=D^jf(x)$ when $x\in E$, $j=0,\dots,n$.

\item\label{p:C.n.Hoischen:1a}
$D^jg(x)\not=0$ when $x\not\in E$, $1\leq j\leq n+1$.

\item\label{p:C.n.Hoischen:2}
$D^{n+1}g(x)\not=0$ when $x\in E\sm(P^n_{\min}\cup P^n_{\max})$ and $x$ not a turning point of $D^nf$

\item\label{p:C.n.Hoischen:3}
$|D^ng(x)-D^nf(x)|<\ep_0$.

\item\label{p:C.n.Hoischen:4}
$|D^jg(x)-D^jf(x)|<\ep(x)$, when $x\notin W_n$, $j=0,\dots,n$.

\item\label{p:C.n.Hoischen:6}
$D^jg$ is comonotone with $D^jf$, with $E$ as witnessing set, $j=0,\dots,n$.
\end{enumerate}
\end{thm}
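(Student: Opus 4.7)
The plan is a two-phase construction. In Phase 1, build piecewise on the intervals cut out by $E$ a $C^\infty$ function $g_0\colon\R\to\R$ that already satisfies the interpolation, comonotonicity and approximation conditions. In Phase 2, apply Theorem \ref{t:piecewise.monotone.interpolation} at derivative order $n+1$ to replace $g_0$ by an entire $g$ that inherits those properties.

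For Phase 1, consider the decomposition of $\R$ into intervals by $E$. On each finite interval $[e_k,e_{k+1}]$ between consecutive points, $D^nf$ is monotone (since $E$ witnesses piecewise monotonicity of $D^jf$ for $j=0,\dots,n$) and nonconstant (since two distinct points of $E$ cannot lie on a common platform of $D^nf$); let $s_k\in\{1,-1\}$ be such that $s_kD^nf$ is increasing. Invoke Proposition \ref{p:approx.n.der.incr.minus}---this is where the assumption $(Q_n)$ enters---with $\al_j=D^jf(e_k)$ and $\beta_j=D^jf(e_{k+1})$ for $j\le n$ and with higher-derivative data $\al_j,\beta_j$ ($j>n$) still to be chosen, producing a $C^\infty$ piece $g_k$ that interpolates $f$ to order $n$ at the endpoints, has $s_kD^{n+1}g_k>0$ on $(e_k,e_{k+1})$, and has $D^jg_k\ne 0$ on the open interval for $j=1,\dots,n$. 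On a half-line contained in $P^n_{\min}$ or $P^n_{\max}$, on which $f$ agrees with a polynomial of degree $\le n$, use Theorem \ref{t:approx.a.poly.on.platform} or \ref{t:approx.a.poly.on.platform.2} to build the piece; these control $|D^n\cdot-D^nf|<\ep_0$ but not $|D^j\cdot-D^jf|$ by $\ep(x)$ for $j<n$, which is precisely why (\ref{p:C.n.Hoischen:4}) is restricted to $\R\sm W_n$. On a half-line where $D^nf$ is nonconstant, subdivide into bounded subintervals escaping to $\pm\infty$ and apply Proposition \ref{p:approx.n.der.incr.minus} on each with tolerance tending to $0$.

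The pieces must glue $C^\infty$-smoothly at each interior $e_k\in E$, so the higher-derivative data $\al_j=\beta_j$ prescribed there must agree from both sides. The sign condition in Proposition \ref{p:approx.n.der.incr.minus} requires the first nonzero entry $j_0>n$ to have sign $s_k$ (from the right) and $s_{k-1}(-1)^{n+j_0+1}$ (from the left); these are compatible with $j_0=n+1$ at a non-turning point of $D^nf$ (where $s_k=s_{k-1}$) and with $j_0=n+2$ at a turning point ($s_k=-s_{k-1}$). Choose these common values to be nonzero, and in particular choose $D^{n+1}g_0(e_k)\ne 0$ whenever $e_k\not\in P^n_{\min}\cup P^n_{\max}$ and $e_k$ is not a turning point of $D^nf$, which arranges (\ref{p:C.n.Hoischen:2}) already at the construction stage. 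The resulting $g_0$ is $C^\infty$ on $\R$ and $D^{n+1}g_0$ has no flat point anywhere.

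For Phase 2, apply Theorem \ref{t:piecewise.monotone.interpolation} with $n$ replaced by $n+1$, $m=\infty$, $T:=E$, and $f:=g_0$, taking $U_j=\e$ for $j\le n$ and $U_j=\R$ for $j\ge n+1$, with error tolerance a positive continuous function $\ep'(x)$ chosen smaller than $\ep(x)/2$ off $W_n$ and smaller than $\ep_0/2$ on $W_n$. The resulting entire $g$ satisfies $D^jg=D^jg_0=D^jf$ on $E$ for $j\le n$ (giving (\ref{p:C.n.Hoischen:5})); the triangle inequality with the Phase 1 bounds yields (\ref{p:C.n.Hoischen:3}) and (\ref{p:C.n.Hoischen:4}); sign-preservation of $D^jg$ for $j\le n+1$ (clause (3) of Theorem \ref{t:piecewise.monotone.interpolation}) combined with the construction gives (\ref{p:C.n.Hoischen:2}) and $D^{n+1}g\ne 0$ off $E$; Proposition \ref{p:no.flat.points.comonotone} applied inductively downward in $j$ then supplies the remaining cases of (\ref{p:C.n.Hoischen:1a}) and the comonotonicity (\ref{p:C.n.Hoischen:6}) with $E$ as witnessing set. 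The main obstacle is the Phase 1 bookkeeping: choosing higher-derivative data at each point of $E$ consistently from both sides so that the sign constraints of Proposition \ref{p:approx.n.der.incr.minus} (and, at half-line endpoints in platforms, of Theorem \ref{t:approx.a.poly.on.platform.2}) are simultaneously met while ensuring $D^{n+1}g_0$ has no flat points, the latter being the key hypothesis needed to invoke Theorem \ref{t:piecewise.monotone.interpolation} in Phase 2.
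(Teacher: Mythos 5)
Your overall architecture (a piecewise $C^\infty$ function $g_0$ built on the $E$-decomposition, followed by Theorem \ref{t:piecewise.monotone.interpolation} at order $n+1$ using the absence of flat points of $D^{n+1}g_0$) is the same as the paper's, and your sign bookkeeping at interior points of $E$ ($j_0=n+1$ at non-turning points, $j_0=n+2$ at turning points) is correct. But there is a genuine gap in your treatment of the unbounded intervals: your two half-line cases do not cover the configuration where $P^n_{\max}\not=\e$ but $e_{\max}\notin P^n_{\max}$ (and its mirror at $-\infty$). In that configuration the half-line $[e_{\max},\infty)$ is not contained in the platform, so your first case does not apply; and $D^nf$ \emph{is} eventually constant (on $P^n_{\max}=[v,\infty)$), so your second recipe --- subdivide into bounded pieces and apply Proposition \ref{p:approx.n.der.incr.minus} with tolerance tending to $0$ --- breaks down, since that proposition requires $sD^nf$ to be nonconstant on each piece. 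Nor can you fall back on Theorem \ref{t:approx.a.poly.on.platform.2} anchored at $v$ with exact matching of the derivatives of $f$ there: the resulting lower-order derivatives drift polynomially away from those of $f$, while condition \ref{p:C.n.Hoischen:4} still demands $|D^jg-D^jf|<\ep(x)$ for all $j\leq n$ on this half-line, because it lies outside $W_n$ precisely when $e_{\max}\notin P^n_{\max}$.

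The paper resolves this (its Case 2) by choosing a point $v_0$ inside $P^n_{\max}$, applying Theorem \ref{t:approx.a.poly.on.platform} on $[v_0,\infty)$ --- which keeps every $D^j$ within $\ep(x)$ of $D^jf$ but necessarily \emph{perturbs} the jet at $v_0$ away from that of $f$ (e.g.\ $D^ng(v_0)=k-srb$) --- and then bridging $[e_{\max},v_0]$ with Proposition \ref{p:approx.n.der.incr}, whose $\de$-stability under perturbation of the endpoint data is exactly where the openness of $V_n[c,d]$, i.e.\ the full strength of $(Q_n)$ over $(Q_n^-)$, is used (together with a sign claim ensuring the bridging piece has nonvanishing derivatives). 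Your proposal invokes only Proposition \ref{p:approx.n.der.incr.minus} and identifies it as "where $(Q_n)$ enters," but that proposition needs only $(Q_n^-)$; the fact that your argument never uses the openness half of $(Q_n)$ is the symptom of the missing case (cf.\ Remark \ref{r:only.Q.minus}). Secondary, fixable omissions: when $P^n_{\max}=\e$ the subdivision points escaping to $\infty$ must be chosen with $D^jf\not=0$ for $j=0,\dots,n$ (Proposition \ref{p:non.zero.point}) so that interpolation yields \ref{p:C.n.Hoischen:1a} there; and the comonotonicity on the unbounded intervals does not follow from Proposition \ref{p:no.flat.points.comonotone} alone --- on a half-line whose endpoint lies outside the extreme platforms one needs an auxiliary point $b_{I,j}$ with $\ep(b_{I,j})$ small enough to pin down the direction of monotonicity, since endpoint agreement is available at only one end.
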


\begin{proof}
By Corollary \ref{c:platforms.of.derivatives}, each platform of $D^jf$, $j=0,\dots,n$, is contained in a platform of $D^nf$ so that if a set has at most one point on each platform of $D^nf$, then it also has at most one point on each platform of each $D^jf$, $j=0,\dots,n$.

We may assume that $\ep(x)\leq \ep_0$ for all $x\in\R$ and $\lim_{x\to-\infty} \ep(x)=0$ and $\lim_{x\to\infty} \ep(x)=0$.
Since $D^nf$ is not constant, it takes on continuum many values and hence has continuum many distinct platforms where it has nonzero values.
Choose three such platforms different from $P^n_{\max}$, $P^n_{\min}$, and different from the countably many platforms in the zero sets $Z_{D^jf}$ of $D^jf$, $0\leq j\leq n$, (see Proposition \ref{p:piecewise.monotone.characterization} (2)), as well as the countably many platforms of $D^nf$ for the points of $E$.
By adding to $E$ one point from each of these three platforms,
we may assume that $E$ contains at least three points since the theorem for the original $E$ follows from the theorem for this augmented $E$.
(For one of these new points $x$, the requirement $D^jg(x)\not=0$ in \ref{p:C.n.Hoischen:1a} or \ref{p:C.n.Hoischen:2} is ensured by \ref{p:C.n.Hoischen:5} for the augmented set $E$ since $x\notin Z_{D^jf}$.)

For $a\in E$, let $a^-$ denote the immediate predecessor of $a$ in $E\cup\{-\infty\}$, and let $a^+$ denote the immediate successor of $a$ in $E\cup\{\infty\}$.

When $e_{\max}\in E$, on $[e_{\max},\infty)$, $D^nf$ is monotone. If $P^n_{\max}=\e$ then, using Proposition \ref{p:non.zero.point}, we can add to the set $E$ above $e_{\max}$ an unbounded increasing sequence of points $\{a_i\}_{i=1}^\infty$ with $D^jf(a_i)\not=0$, $j=0,\dots,n$, $i=1,2,\dots$, so that no two of the points $a_0=e_{\max}<a_1<a_2<\dots$ are on the same platform of $D^nf$. Then $e_{\max}=\infty$ for this augmented $E$, and \ref{p:C.n.Hoischen:5} for this augmented $E$ ensures \ref{p:C.n.Hoischen:1a} and \ref{p:C.n.Hoischen:2} for the original $E$.
When $P^n_{\rm  max}\not=\e$, then $e_{\max}\in E$ since $E$ is a nonvoid closed discrete set with at most one point on $P^n_{\rm  max}$. Thus, we may assume that $e_{\max}\in E$ if and only if $P^n_{\max}\not=\e$. Similarly, we can assume that $e_{\min}\in E$ if and only if $P^n_{\min}\not=\e$.

If $P^n_{\max}\not=\e$, let $s_{\max}$ be the value of $s$ as in the second part of Theorem \ref{t:approx.a.poly.on.platform.2} applied with $r=1$, $I=rI=[e_{\max},\infty)$. (The values of $\ep$ and the $\beta_j$ are not relevant to the second part of the theorem.)
Similarly, if $P^n_{\min}\not=\e$, let $s_{\min}$ be the corresponding value of $s$, this time applying Theorem \ref{t:approx.a.poly.on.platform.2} with $r=-1$, $rI=(-\infty,e_{\min}]$.

Let $\I$ be the collection of all closed intervals $I$ of the partition determined by $E$. So $\I=\{[a,a^+]:a\in E,\,a\not=e_{\max}\}\cup \{(-\infty,a]:a=e_{\min}$ and $e_{\min}\in E\}\cup
\{[a,\infty):a=e_{\max}$ and $e_{\max}\in E\}$. For each $a\in E$. write $I_L(a)$, $I_R(a)$ for the elements of $\I$ so that $\max I_L(a) = a = \min I_R(a)$.

For each of the (at most two) unbounded intervals $I\in\I$ with endpoint $a\notin P^n_{\min}\cup P^n_{\max}$, and each $j=0,\dots,n$, we have again that $D^jf$ is monotone but not constant on $I$. Fix a point $b_{I,j}\in I$ so that $D^jf(a)\not=D^jf(b_{I,j})$. We can take $\ep(x)$ so that the values $\ep(b_{I,j})$ are small enough so that for each $I,j$,
\begin{equation}\label{eq:bIj}
|y-D^jf(b_{I,j})|<\ep(b_{I,j})\ \text{implies that $y-D^jf(a)$ has the same sign as}\ D^jf(b_{I,j})-D^jf(a).
\end{equation}

\begin{claim}\label{claim:g_I.properties}
There are $C^\infty$ functions $g_I\colon I\to\R$, $I\in\I$, so that for $s\in\{1,-1\}$, $k\in\{L,R\}$, and $a\in E$ we have the following, where $I_L=I_L(a)$, $I_R=I_R(a)$.
\begin{enumerate}[label=(\roman*)]
\item\label{cl.gI.1}
For $I\in\I$, $D^jg_I(x)\not=0$, $j=1,\dots,n+1$, if $x$ is not an endpoint of $I$.

\item\label{cl.gI.1a}
$D^jg_{I_k}(a)=D^jf(a)$, $j=0,\dots,n$.

\item\label{cl.gI.2}
If $a\notin P^n_{\min}\cup P^n_{\max}$ and $sD^nf$ is increasing on $I_L\cup I_R$, then $D^{n+1}g_{I_k}(a)=s$ and $D^jg_{I_k}(a)=0$, $j > n+1$.

\item\label{cl.gI.2a}
If $a = e_{\min}\in P^n_{\min}$, $s_{\min}=s$ and $sD^nf$ is increasing on $I_R$, or $a = e_{\max}\in P^n_{\max}$, $s_{\max}=s$ and $sD^nf$ is increasing on $I_L$, then $D^{n+1}g_{I_k}(a)=s$ and $D^jg_{I_k}(a)=0$, $j > n+1$.

\item\label{cl.gI.3}
If $a\notin P^n_{\min}\cup P^n_{\max}$ and $sD^nf$ is decreasing on $I_L$, and increasing on $I_R$, then $D^{n+1}g_{I_k}(a)=0$, $D^{n+2}g_{I_k}(a)=s$ and $D^jg_{I_k}(a)=0$, $j > n+2$.

\item\label{cl.gI.3a}
If $a= e_{\min}\in P^n_{\min}$, $s_{\min}=-s$ and $sD^nf$ is increasing on $I_R$, or $a= e_{\max}\in P^n_{\max}$, $s_{\max}=s$ and $sD^nf$ is decreasing on $I_L$, then $D^{n+1}g_{I_k}(a)=0$, $D^{n+2}g_{I_k}(a)=s$ and $D^jg_{I_k}(a)=0$, $j > n+2$.

\item\label{cl.gI.4}
$|D^jg_I(x)-D^jf(x)|<\ep(x)/2$, $x\in I$, $j=0,\dots,n$, when $D^nf$ is not constant on $I$, in particular when $I$ is bounded

\item\label{cl.gI.5}
$|D^ng_I(x)-D^nf(x)|<\ep_0/2$, $x\in I$, when $D^nf$ is constant on $I$

\item\label{cl.gI.6}
If $e_{\max}\in P^n_{\max}$ with $I=[e_{\max},\infty)$, or $e_{\min}\in P^n_{\min}$ with $I=(-\infty,e_{\min}]$, then $D^jg_I$ has the same monotonicity as $D^jf$ on $I$, $j=0,\dots,n$.
\end{enumerate}
\end{claim}

\begin{proof}
For $j=0,\dots,n$, $D^jf$ is piecewise monotone since $D^nf$ is, by Proposition \ref{p:f.not.piecewise.monotone.implies.Df.also}.

Note that if $a<b$ and $E\cap (a,b)=\e$, then for $j=0,\dots,n$, $D^jf$ is monotone on $(a,b)$ by Proposition \ref{p:not piecewise monotone} since it has no turning platforms on $(a,b)$ by the assumption that $E$ has a point on each turning platform. Hence, $D^jf$ is monotone on $[a,b]$ by continuity.
We consider three cases.

\m

\n Case 1. $I=[a,b]$ with $a<b$ adjacent elements of $E$.

\m

As noted above, for $j=0,\dots,n$, $D^jf$ is monotone on $I$.
Apply Proposition \ref{p:approx.n.der.incr.minus} with $r=1$ and $s\in\{1,-1\}$ chosen so that $sD^nf$ is increasing on $I$, taking for $\ep$ the value
$\ep_1=\inf\{\ep(x)/2:a\leq x\leq b\}$. This produces a $\de>0$.
Let $\al_i=D^if(a)$, $\beta_i=D^if(b)$ for $j=0,\dots,n$.
Define $\al_j$, $\beta_j$ for $j>n$ as follows:

\smallskip

\n When $a,b\notin P^n_{\min}\cup P^n_{\max}$:
\begin{enumerate}[{\upshape\arabic*.}] 
\item\label{al.beta.1}
if $sD^nf$ is increasing on $[a^-,a]$, let $\al_{n+1}=s$, $\al_j=0$ for $j>n+1$;

\item\label{al.beta.2}
if $sD^nf$ is decreasing on $[a^-,a]$, let $\al_{n+1}=0$, $\al_{n+2}=s$, $\al_j=0$ for $j>n+2$;

\item\label{al.beta.5}
if $sD^nf$ is increasing on $[b,b^+]$, let $\beta_{n+1}=s$, $\beta_j=0$ for $j>n+1$;

\item\label{al.beta.6}
if $sD^nf$ is decreasing on $[b,b^+]$, let $\beta_{n+1}=0$, $\beta_{n+2}=-s$, $\beta_j=0$ for $j>n+2$.
\end{enumerate}
When $a=e_{\min}\in P^n_{\min}$:
\begin{enumerate}[{\upshape\arabic*.}] 
\setcounter{enumi}{4}
\item\label{al.beta.3}
if $s_{\min} = s$, let $\al_{n+1}=s$, $\al_j=0$ for $j>n+1$;

\item\label{al.beta.4}
if $s_{\min}=-s$, let $\al_{n+1}=0$, $\al_{n+2}=s$, $\al_j=0$ for $j>n+2$.
\end{enumerate}
When $b=e_{\max}\in P^n_{\max}$:
\begin{enumerate}[{\upshape\arabic*.}] 
\setcounter{enumi}{6}
\item\label{al.beta.7}
if $s_{\max}=s$, let $\beta_{n+1}=s$, $\beta_j=0$ for $j>n+1$;

\item\label{al.beta.8}
if $s_{\max}=-s$, let $\beta_{n+1}=0$, $\beta_{n+2}=-s$, $\beta_j=0$ for $j>n+2$.
\end{enumerate}
The conditions on the $\al_i$ and $\beta_i$ in Proposition \ref{p:approx.n.der.incr.minus} are then satisfied, so we get a $C^\infty$ function $g_I\colon I\to\R$ satisfying
\begin{enumerate}[label={(a\upshape\arabic*)}]
\item\label{1.1}
$D^jg_I(a)=\al_j$ and $D^jg_I(b)=\beta_j$, $j=0,1,\dots$

\item\label{1.2}
$D^{n+1}g_I(x)\not=0$, $a<x<b$

\item\label{1.3}
$|D^jg_I(x)-D^jf(x)|<\ep_1$, $j=0,\dots,n$, $a\leq x\leq b$

\item\label{1.4}
$D^jg_I(x)\not=0$, $j=1,\dots,n$, $a<x<b$
\end{enumerate}

Clause \ref{cl.gI.1} of the claim follows from \ref{1.2} and \ref{1.4}. Clauses \ref{cl.gI.1a}, \ref{cl.gI.2}, \ref{cl.gI.2a}, \ref{cl.gI.3} and \ref{cl.gI.3a} for $a$ and $I = I_R(a)$, and for $b$ and $I = I_L(b)$, follow from \ref{1.1} and the definitions of $\al_j$ and $\beta_j$. This is mostly immediate from these definitions. Note that 4.\ and 8.\ of the definition of $\beta_j$, together with \ref{1.1}, give parts of clauses \ref{cl.gI.3} and \ref{cl.gI.3a}. Writing the relevant parts in terms of $t\in\{1,-1\}$ and $b$ instead of $s$ and $a$, respectively, they state that
\begin{itemize}
\item
If $b\notin P^n_{\max}$ and $tD^nf$ is decreasing on $I=[a,b]=I_L(b)$, and increasing on $I_R(b)$, then $D^jg_{I}(b)=D^jf(b)$, $j=0,\dots,n$, $D^{n+1}g_{I}(b)=0$, $D^{n+2}g_{I}(b)=s$ and $D^jg_{I}(b)=0$, $j > n+2$.

\item
If $b = e_{\max}\in P^n_{\max}$, $tD^nf$ is decreasing on $I=[a,b]=I_L(b)$ and $s_{\max}=t$, then $D^jg_{I}(b)=D^jf(b)$, $j=0,\dots,n$, $D^{n+1}g_{I}(b)=0$, $D^{n+2}g_{I}(a)=t$ and $D^jg_{I_k}(a)=0$, $j > n+2$.
\end{itemize}
In both cases, under 4.\ or 8., respectively, the assumptions are satisfied when we take $t=-s$ and then \ref{1.1} gives the desired conclusion.

\m

\n Case 2. $I=[e_{\max},\infty)$ with $P^n_{\max}\not=\e$ and $e_{\max}\notin P^n_{\max}$, or $I=(-\infty,e_{\min}]$ with $P^n_{\min}\not=\e$ and $e_{\min}\notin P^n_{\min}$.

\m

Set $r=1$ under the first set of assumptions, $r=-1$ under the second set. For $r\in\{1,-1\}$, let $(\al,P^n) = (\al_r,P^n_r)$ be given by
\begin{itemize}
\item
$r=1$: $\al=e_{\max}$ and $P^n=P^n_{\max}$.

\item
$r=-1$: $\al = re_{\min}$ and $P^n = rP^n_{\min}$.
\end{itemize}
Then $r[\al,\infty)$ is the interval $[e_{\max},\infty)$ or $(-\infty,e_{\min}]$, depending on whether $r=1$ or $r=-1$. Write $P^n=[v,\infty)$.
Since $\al\notin P^n$ by assumption, we have $\al<v$.
Since $D^nf$ is constant on $r[v,\infty)$, $f$ is a polynomial on this interval. If $f$ is nonzero of degree $m$ then $m\leq n$ and we may choose $v_0\in [v,\infty)$ so that $D^jf(rv_0)\not=0$, $j=0,\dots,m$. If $f=0$ on $P^n$, take $v_0=v$.
Write $I_1=[\al,v_0]$, $I_2=[v_0,\infty)$.
There are no points of $E$ in the interior of $I_1$ since $E\cap rI_1 = \{r\al\}$, so each $D^jf$, $j=0,\dots,n$, is monotone on $rI_1$.

Let $s\in\{1,-1\}$ be such that $sD^nf$ is increasing on $rI_1$.
Since $D^nf$ is monotone and not constant between $r\al$ and $rv$, the constant value $k$ of $D^nf$ on $rP^n$ satisfies $D^nf(r\al)\not = k$.
We have that $x\mapsto srD^nf(rx)$ is increasing on $I_1$, so
\begin{equation}\label{eq:sr}
srD^nf(r\al) < srD^nf(rv_0) = srk.
\end{equation}
Apply Proposition \ref{p:approx.n.der.incr} to $f$ on $rI_1$ with $\ep_2=\inf\{\ep(rx)/2:\al \leq x\leq v_0\}$ taking the place of $\ep$ to get $\de>0$ having the properties stated in that proposition. 
Fix a positive number $b$ so that
\[
b<\min(\de/2,\ep(rv_0)/2). 
\]
Let $\beta_0=k-srb$, $\beta_1=s$, $\beta_j=0$ for $j>1$. Theorem \ref{t:approx.a.poly.on.platform} applies, taking for $\ep(x)$ the function $\min(\de/2,\ep(x)/2)$, and yields a $C^\infty$ function $g_{I_2}\colon rI_2\to\R$ satisfying

\begin{enumerate}[label={(b\upshape\arabic*)}]
\item\label{2.1}
$D^{n+1}g_{I_2}(rx) \not= 0$ for $x>v_0$

\item\label{2.2}
$D^ng_{I_2}(rv_0) = k - srb$, $D^{n+1}g_{I_2}(rv_0)=s$, $D^jg_{I_2}(rv_0)=0$, $j\geq n+2$.

\item\label{2.3}
$|D^jg_{I_2}(x)-D^jf(x)|<\min(\de/2,\ep(x)/2)$, $x\in rI_2$, $x\not=rv_0$, $j=0,\dots,n$

\item\label{2.4}
$s (-r)^{n+j+1}D^{j}g_{I_2}(x) > s(-r)^{n+j+1}D^{j}f(x)$, for all $x\in rI_2$, $j=0,\dots,n$.

\item\label{2.5}
$D^{j}g_{I_2}(rx)\not=0$ for $x\geq v_0$, $j=1,\dots,n$
\end{enumerate}

\begin{claim}\label{claim:not.opposite.sign}
When $f$ is not the zero polynomial on $rI$, and $m<j\leq n$, $D^jg_{I_2}(rv_0)$ and $D^jf(r\al)$ have the same nonzero sign. The same is true when $f=0$ on $rI$ for all $j=0,\dots,n$.
\end{claim}

\begin{proof}
Suppose $f$ is not the zero polynomial on $rI$. When $m<j\leq n$, we have $D^jf(rv_0) = 0$ since $f$ is a polynomial of degree $m$ on $rP^n$. In particular, $k=D^nf(rv_0) = 0$. The claim is then true for $j=n$ because $D^ng_{I_2}(rv_0) = k - srb = -srb$ has sign $-sr$, and from (\ref{eq:sr}) we see that when $k=0$, $D^nf(r\al)$ also has sign $-sr$. By \ref{2.4} it follows that when $m<j\leq n$, the sign of $D^jg_{I_2}(rv_0)$ is $s(-r)^{n+j+1}$. If $m<j<j+1\leq n$ and the sign of $D^{j+1}f(r\al)$ is $s(-r)^{n+j+2} = s(-r)^{n+j}$ then we verify that the sign of $D^{j}f(r\al)$ is $s(-r)^{n+j+1}$.

Note first that on the interval $r[\al,v_0]$, $D^jf$ is monotone but not constant, so since $D^jf(rv_0)=0$, we have $D^jf(r\al)\not=0$. If $r=1$ then consider the case $D^{j+1}f(\al)>0$. In this case, $D^{j+1}f$ decreases on $[\al,v_0]$ to its final value $0$, so it takes values $\geq 0$. Thus, $D^jf$, whose derivative it is, is an increasing function with final value $0$, so $D^{j}f(\al)<0$. Similarly, if $D^{j+1}f(\al)<0$ then $D^{j}f(\al)>0$. If $r=-1$, then consider the case $D^{j+1}f(-\al)>0$. In this case, $D^{j+1}f$ increases on $[-v_0,-\al]$ from its initial value of $0$, so it takes values $\geq 0$. Thus, $D^jf$, whose derivative it is, is also an increasing function with initial value $0$, so $D^{j}f(-\al)>0$. Similarly, if $D^{j+1}f(-\al)<0$ then $D^{j}f(-\al)<0$. In both settings, the sign of $D^{j}f(r\al)$ is $(-r)$ times that of $D^{j+1}f(r\al)$, so it is $s(-r)^{n+j+1}$.

Similarly for the case $f=0$ on $rI$.
\end{proof}

We define $I_0=[\beta,\al]=[\beta_r,\al_r]$ so that $rI_0\in\I$ is the neighbor of $rI_1$ on the opposite side from $rI_2$, as follows.
\begin{itemize}
\item
If $r=1$ then $I_0 = rI_0 = [\beta,\al] = [e^-_{\max},e_{\max}]$.

\item
If $r=-1$ then $rI_0 = [r\al,r\beta] = [e_{\min},e^+_{\min}]$.
\end{itemize}
Since $rI_0\in\I$ is bounded, Case 1 provides the function $g_{rI_0}$.
Write
\[
\mu_j = D^jg_{rI_0}(r\al),\ \ \nu_j=D^jg_{I_2}(rv_0),\ \ j=0,1,\dots.
\]
These satisfy the requirements for obtaining the $C^\infty$ function given by Proposition \ref{p:approx.n.der.incr}, as we now verify.
\begin{enumerate}[label={--},leftmargin=0.5cm]
\item
$|\mu_j - D^jf(r\al)| = |D^jg_{I_0}(r\al) - D^jf(r\al)| = 0$, $j=0,\dots,n$.

\item
$|\nu_j - D^jf(rv_0)| = |D^jg_{I_2}(rv_0) - D^jf(rv_0)| \leq \de/2 < \de$
(by \ref{2.3} and continuity of $D^jg_{I_2}$, $D^jf$).

\item
By \ref{2.2}, the least $j>n$ for which $\nu_j = D^jg_{I_2}(rv_0)\not=0$ is $j=n+1$. We have $s(-r)^{n+j+1}\nu_j = s\nu_{n+1} = ss = 1 > 0$.

\item
Since $r\al\notin P^n_{\min}\cup P^n_{\max}$, by \ref{cl.gI.2} and \ref{cl.gI.3}, 
the least $j>n$ for which $\mu_j = D^jg_{rI_0}(r\al)\not=0$ is either $n+1$ or $n+2$.
We have $j=n+1$ when $sD^nf$ is increasing on $rI_0$ and in that case, by \ref{cl.gI.2}, $\mu_{n+1}=s$, so $sr^{n+j+1}\mu_{j} = s\mu_{n+1} = ss = 1 > 0$.
We have $j=n+2$ when $sD^nf$ is decreasing on $rI_0$ and in that case, by \ref{cl.gI.3} with $sr$ in the place of $s$, $\mu_{n+2}=sr$,%
\footnote{When $r=1$, $rI_0=I_L$ $(=I_L(r\al))$ and $sD^nf$ decreases on $I_L$ and increases on $I_R$, so by \ref{cl.gI.3}, $D^{n+2}g_{rI_0}(r\al)=s=sr$. When $r=-1$, $rI_0=I_R$ and $sD^nf$ increases on $I_L$ and decreases on $I_R$, so $srD^nf$ decreases on $I_L$ and increases on $I_R$, giving by \ref{cl.gI.3}, $D^{n+2}g_{rI_0}(r\al)=sr$.}
so $sr^{n+j+1}\mu_j = sr(sr) = 1 > 0$.
\end{enumerate}
Thus, by the choice of $\de$, there is a $C^\infty$ function $g_{I_1}\colon rI_1\to\R$ such that the following holds.

\begin{enumerate}[label={(c\upshape\arabic*)}]
\item\label{3.1}
$D^jg_{I_1}(r\al)=\mu_j$ and $D^jg_{I_1}(rv_0)=\nu_j$, $j=0,1,\dots$, and in particular,

\n $D^jg_{I_1}(r\al)=\mu_j=D^jg_{rI_0}(r\al)=D^jf(r\al)$ for $j=0,\dots,n$.

\item\label{3.2}
$D^{n+1}g_{I_1}(rx)\not=0$, $\al<x<v_0$

\item\label{3.3}
$|D^jg_{I_1}(rx)-D^jf(rx)|<\ep(rx)/2$, $\al\leq x\leq v_0$, $j=0,\dots,n$

\item\label{3.4}
$D^jg_{I_1}(rx)\not=0$, $j=1,\dots,n$, $\al<x<v_0$
\end{enumerate}

In \ref{3.1}, when $j=0,\dots,n$, we have $\mu_j=D^jg_{rI_0}(r\al)=D^jf(r\al)$ by \ref{cl.gI.2} or \ref{cl.gI.3} for $r\al$.
For \ref{3.4}, we use clause \ref{2.9.5} of Proposition \ref{p:approx.n.der.incr}. For that we need to verify for $j=1,\dots,n$, the two conditions
\begin{enumerate}[(a)]
\item
if $D^jf(rv_0)=0$ then $\nu_jD^jf(r\al)\geq 0$, and

\item
if $D^jf(r\al)=0$ then $\mu_jD^jf(rv_0)\geq 0$.
\end{enumerate}
Condition (b) holds since its hypothesis is $\mu_j=0$. For (a), if $f$ is not the zero polynomial on $rI$, we have for $j=1,\dots,m$ that $D^jf(rv_0)\not=0$, while by Claim \ref{claim:not.opposite.sign}, $\nu_j=D^jg_{I_2}(rv_0)$ has the same sign as $D^jf(r\al)$ for $j=m+1,\dots,n$. Claim \ref{claim:not.opposite.sign} also takes care of the case where $f=0$ on $rI$.

The function $g_I=g_{I_1}\cup g_{I_2}$ satisfies \ref{cl.gI.1}, \ref{cl.gI.1a}, \ref{cl.gI.2}, \ref{cl.gI.3} and \ref{cl.gI.4} with respect to $r\al$. \ref{cl.gI.1} follows from \ref{3.2}, \ref{3.4}, \ref{2.1}, \ref{2.2}, \ref{2.5}. By \ref{3.1}, \ref{cl.gI.1a}, \ref{cl.gI.2} and \ref{cl.gI.3} hold for $g_I$ at $r\al$ since they hold for $g_{rI_0}$. \ref{cl.gI.4} holds by \ref{3.3}, \ref{2.3}. The other clauses do not apply.

\m

\n Case 3.  $I=[e_{\max},\infty)$ with $e_{\max}\in P^n_{\max}$, or $I=(-\infty,e_{\min}]$ with $e_{\min}\in P^n_{\min}$.

\m

For $r\in\{1,-1\}$, define $\al$, $P^n=[v,\infty)$, $I_0=[\beta,\al]$ and $k$ as in Case 2. For $r \in\{1,-1\}$, let $s_r=s_{\max}$ if $r=1$, and $s_r=s_{\min}$ if $r=-1$. For $j=1,2,\dots$, define $\beta_j$ as follows.
\begin{itemize}
\item
If $s_rD^nf$ is increasing on $rI_0$, then $\beta_{1}=s_r$, $\beta_j=0$ for $j>1$;

\item
if $s_rD^nf$ is decreasing on $rI_0$, then $\beta_{1}=0$, $\beta_{2}=rs_r$, $\beta_j=0$ for $j>2$.
\end{itemize}
In the first case, the least $j$ for which $\beta_j\not=0$ is $j=1$. We have $s_rr^{j+1}\beta_j = r^{2} = 1>0$.
In the second case, the least $j$ for which $\beta_j\not=0$ is $j=2$. We have $s_rr^{j+1}\beta_j = r^{4} = 1>0$.
By Theorem \ref{t:approx.a.poly.on.platform.2}, there is a $C^\infty$
function $g_I\colon I\to\R$ such that the following hold.
\begin{enumerate}[label={(d\upshape\arabic*)}]
\item\label{4.1}
$s_r D^{n+1}g_I(x)>0$, $x\in I$, $x\not=r\al$.

\item\label{4.2}
$D^{n+j}g_I(r\al)=\beta_j$, $j=1,2,\dots$.

\item\label{4.3}
$D^jg_I(r\al) = D^jf(r\al)$, $j=0,\dots,n$.

\item\label{4.4}
$|D^ng_I(x)-D^nf(x)| < \ep_0/2$, $x\in I$.

\item\label{4.5}
$D^{j}g_I(x)\not=0$ for all $x\in I\sm\{ra\}$, $j=1,\dots,n$.

\item\label{4.6}
$D^jg_I$ has the same monotonicity as $D^jf$ on $I$, $j=0,\dots,n$.
\end{enumerate}
\ref{cl.gI.1} holds by \ref{4.1}, \ref{4.5}.
\ref{cl.gI.1a} holds by \ref{4.3}.
\ref{cl.gI.2a} and \ref{cl.gI.3a} hold by \ref{4.2} and the definition of $\beta_j$. (The assumption of \ref{cl.gI.3a} when $a=e_{\min}$ or $a=e_{\max}$ holds for $s=rs_r$.)
\ref{cl.gI.5} and \ref{cl.gI.6} hold by \ref{4.5} and \ref{4.6}. The other clauses do not apply.

This completes the proof of the claim.
\end{proof}

Now that we are given a function $g_I$ for each $I\in\I$, we define a function $g_0\colon\R\to\R$ by $g_0(x)=g_I(x)$ for $x\in I\in\I$. It is readily verified that $g_0$ is well-defined and $C^\infty$, and by Claim \ref{claim:g_I.properties} we have the following properties.
\begin{enumerate}[label={(e\upshape\arabic*)}]
\item\label{5.1}
$D^jg_0(a)=D^jf(a)$ when $a\in E$, $j=0,\dots,n$

\item\label{5.2}
$D^jg_0(x)\not=0$ for $x\not\in E$, $1 \leq j \leq  n+1$.

\item\label{5.3}
$D^{n+1}g_0(a)\not=0$ when $a\in E\sm (P^n_{\min}\cup P^n_{\max})$ and $D^nf$ is monotone on $(a^-,a^+)$

\item\label{5.4}
$D^{n+1}g_0(a)=0$ and $D^{n+2}g_0(a)\not=0$ when $a\in E\sm (P^n_{\min}\cup P^n_{\max})$ and $D^nf$ is not monotone on $(a^-,a^+)$

\item\label{5.4a}
When $a\in E\cap (P^n_{\min}\cup P^n_{\max})$, we have either $D^{n+1}g_0(a)\not=0$ or $D^{n+2}g_0(a)\not=0$.

\item\label{5.5}
$|D^ng_0(x)-D^nf(x)|<\ep_0/2$, $x\in \R$

\item\label{5.6}
$|D^jg_0(x)-D^jf(x)|<\ep(x)/2$ when $x\notin W_n$, $j=0,\dots,n$.

\item\label{5.7}
If $e_{\max}\in P^n_{\max}$ with $I=[e_{\max},\infty)$, or $e_{\min}\in P^n_{\min}$ with $I=(-\infty,e_{\min}]$, then $D^jg_0$ has the same monotonicity as $D^jf$ on $I$, $j=0,\dots,n$.
\end{enumerate}
Note that by \ref{5.2}, \ref{5.3}, \ref{5.4}, \ref{5.4a}, $D^{n+1}g_0$ has no flat points as a $C^{n+2}$ function since we have $D^{n+2}g_0(a)\not=0$ whenever $D^{n+1}g_0(a)=0$.

By Theorem \ref{t:piecewise.monotone.interpolation}, taking $n$ and $m$ to be $n+1$ and $n+2$, respectively, $T=E$, and $U_i=\e$ for $i=1,\dots,n+2$, we get an entire function $g$ such that $g(\R)\sq\R$ and for all $x\in \R$,
\begin{enumerate}[label={(f\upshape\arabic*)}]
\item\label{ff.1}
$|D^ig(x) - D^ig_0(x)|<\varepsilon(x)/2$, $i=0,\dots,n+2$.

\item\label{ff.2}
$D^ig(x) = D^ig_0(x)$, $x\in E$, $i=0,\dots,n+2$.

\item\label{ff.3}
$D^kg(x)$ has the same sign as $D^kg_0(x)$, $x\in \R$, $k=0,\dots,n+1$.
\end{enumerate}

We then get \ref{p:C.n.Hoischen:5} from \ref{5.1} and \ref{ff.2}, we get \ref{p:C.n.Hoischen:1a} from \ref{5.2} and \ref{ff.3}, and we get
\ref{p:C.n.Hoischen:2} from \ref{5.3} and \ref{ff.2}. For \ref{p:C.n.Hoischen:3} and \ref{p:C.n.Hoischen:4}, use \ref{5.5}, \ref{5.6} and \ref{ff.1}.
For \ref{p:C.n.Hoischen:6}, let $j=0,\dots,n$.
For bounded intervals $I\in\I$, we have that $D^jf$ is monotone but not constant on $I$. From \ref{p:C.n.Hoischen:1a}, $D^jg$ is also monotone on $I$, and by \ref{p:C.n.Hoischen:5} it agrees with $D^jf$ at the endpoints, so $D^jg$ and $D^jf$ have the same monotonicity on $I$.

For an unbounded interval $I\in\I$ with endpoint $a\notin P^n_{\min}\cup P^n_{\max}$, we have again that $D^jf$ is monotone but not constant on $I$, $D^jg$ is monotone on $I$, and $D^jg(a) = D^jf(a)$. We have the point $b_{I,j}$ satisfying (\ref{eq:bIj}). Then \ref{p:C.n.Hoischen:4} forces $D^jg(b(I,j)) - D^jg(a)$ to have the same sign as $D^jf(b(I,j)) - D^jf(a)$ and hence forces $D^jg$ to have the same monotonicity as $D^jf$ on $I$.

For an unbounded interval $I\in\I$ with endpoint $a\in P^n_{\min}\cup P^n_{\max}$, we have by \ref{5.7} that $D^jg_0$ has the same monotonicity as $D^jf$ on $I$. By \ref{ff.3}, $D^jg_0$ and $D^jg$ have the same monotonicity on $I$, and therefore so do $D^jg$ and $D^jf$.
\end{proof}

\begin{rem}\label{r:only.Q.minus}
If each of $P^n_{\min}$, $P^n_{\max}$ is either empty or contains a point of $E$, then Case 2 in the proof of Claim \ref{claim:g_I.properties} is not needed and the proof only uses $(Q_n^-)$ (via the appeal to Proposition \ref{p:approx.n.der.incr.minus}) rather than $(Q_n)$.
\end{rem}

\begin{rem}\label{r:f.not.cst.on.component.plus.zero.limits}
When $\lim_{x\to\pm\infty}\ep(x)=0$, \ref{p:C.n.Hoischen:4} shows that $\lim_{x\to \infty}(D^ng(x)-D^nf(x))=0$ unless $E\cap P^n_{\max}\not=\e$, and that $\lim_{x\to -\infty}(D^ng(x)-D^nf(x))=0$ unless $E\cap P^n_{\min}\not=\e$.
When $E\cap P^n_{\max}=\{q\}\not=\e$, then on $[q,\infty)$, $D^nf$ is constant and $D^ng$ is monotone (by \ref{p:C.n.Hoischen:2}) and bounded (by \ref{p:C.n.Hoischen:3}), so $\lim_{x\to \infty}(D^ng(x)-D^nf(x))$ exists.
Similarly, when $E\cap P^n_{\min}\not=\e$, $\lim_{x\to -\infty}(D^ng(x)-D^nf(x))$ exists.
\end{rem}

\begin{rem}\label{r:C.n.Hoischen}
The theorem gives no explicit bound for $|D^jg(x)-D^jf(x)|$, $0\leq j< n$, when $x\in W_n$. However using the fact from \ref{p:C.n.Hoischen:5} that $D^jg(x)=D^jf(x)$ when $x=e_{\min}$ or $e_{\max}$, $j=0,\dots,n$, we easily get inductively that for $k=0,\dots,n$,
\begin{itemize}
\item
$|D^{n-k}g(x)-D^{n-k}f(x)|<\ep_0(x-e_{\max})^k/k!$, $e_{\max}\in P^n_{\max}$, $x>e_{\max}$

\item
$|D^{n-k}g(x)-D^{n-k}f(x)|<\ep_0(e_{\min}-x)^k/k!$, $e_{\min}\in P^n_{\min}$, $x>e_{\min}$
\end{itemize}
\end{rem}

In Theorem \ref{t:C.infty.Hoischen.C.infty.fcts} \ref{i:t:C.infty.Hoischen:1}, we required for the case $n=0$ that $D^{n+1}f(x)\not=0$ when $x\in E$ is not a turning point of $D^nf$, including when $x=e_{\max}\in P^n_{\max}$. That has the advantage, writing $a$ for the predecessor of $e_{\max}$ in $E$, that $D^ng$ will be monotone on $[a,\infty)$, just as $D^nf$ is. The following example illustrates why that is a bad strategy when $n>0$ if we want $D^jg$ comonotone with $D^jf$ for all $j=0,\dots,n$.

\begin{exmp}\label{exmp:limit.of.2}
Take $n=2$ and let $f$ be the $C^n$ function given by $f(x)=x^3-x$, $x\leq 0$, $f(x)=-x$, $x\geq 0$.
$Df$ and $D^2f$ are both monotone, and $f$ has one turning point at $-1/\sqrt 3$. Take $E=\{a,e_{\max}\}$, where $a=-1/\sqrt{3}$, $e_{\max}=0$, and note that $0$ is not a turning point for $D^2f$. If we ask that $D^{n+1}g(x) = D^3g(x) > 0$ for all $x\in(a,\infty)$, then because $D^2g(0)=0$ and $D^2g$ is increasing on $[0,\infty)$, we must have that $\lim_{x\to\infty}Dg(x)=\lim_{x\to\infty}-1+\int_0^xD^2g(t)\,dt=\infty$ and hence there is an $a>0$ such that $Dg(a)=0$. Thus, $Dg$ changes sign on $(0,\infty)$ and hence $g$ is not monotone on this interval. Moreover the monotonicity of $g$ is different from that of $f$ on $(a,\infty)$, so $f$ and $g$ are not comonotone for any witnessing set. See Figure 1. The approximations provided by Theorem \ref{t:C.n.Hoischen.C.n.fcts} do not have $D^2g$ monotone on $(a,\infty)$, but they make $D^jg$ comonotone with $D^jf$ for $j=0,1,2$, with $E$ as witnessing set, as in Figure 2.

\[
\begin{tikzpicture}
\def\u{0.5cm}
\def\uf{0.5} 
\def\tt{0.1}
\begin{scope}
\begin{scope}
  	\draw (-2*\u,0) -- (2*\u,0);
	\draw (0,-4*\u) -- (0,2*\u);
	\draw (-1*\u,-\tt*\u) -- ++(0,2*\tt*\u);
	\draw (-\tt*\u,-1*\u) -- ++(2*\tt*\u,0);
	\node[below] at (-1*\u,0) {\tiny $-1$};
	\node[left] at (0,-1*\u) {\tiny $-1$};
    \draw [domain={-1.7*\uf}:0, samples=50, smooth]
        plot (\x,{\uf*((\x/\uf)^3 - (\x/\uf))});
	\draw (0,0) -- (2*\u,-2*\u);
	\fill (0,0) circle [radius=1.2pt];
	\fill (-0.577*\u,0.385*\u) circle [radius=1.2pt];
	\node at (1*\u,-1.7*\u) {\sz $f$};
	\node at (1*\u,0.7*\u) {\sz $g$};
    \coordinate (start) at (-1.75*\u,-3.213*\u);
    \coordinate (p1) at ({(-1/sqrt(3))*\u},{(2/3^1.5)*\u});
    \coordinate (p2) at (0,0);
    \coordinate (end) at (2*\u,-0.2*\u);
    \coordinate (c1) at ($(start) + 0.5*(1*\u,7.67*\u)$);
    \coordinate (c2) at ($(p1) - 0.3*(1*\u,0*\u)$);
    \coordinate (c3) at ($(p1) + 0.3*(1*\u,0*\u)$);
    \coordinate (c4) at ($(p2) - 0.3*(1*\u,-1*\u)$);
    \coordinate (c5) at ($(p2) + 0.5*(1*\u,-1*\u)$);
    \coordinate (c6) at ($(end) - 0.5*(1*\u,1*\u)$);
    \draw[dashed,smooth] (start)
      .. controls (c1) and (c2) .. (p1)
      .. controls (c3) and (c4) .. (p2)
      .. controls (c5) and (c6) .. (end);
\end{scope}
\begin{scope}[xshift=5*\u]
	\draw (-2*\u,0) -- (2*\u,0);
	\draw (0,-4*\u) -- (0,2*\u);
	\draw (-1*\u,-\tt*\u) -- ++(0,2*\tt*\u);
	\draw (-\tt*\u,-1*\u) -- ++(2*\tt*\u,0);
	\node[below] at (-1*\u,0) {\tiny $-1$};
	\node[left] at (0,-1*\u) {\tiny $-1$};
    \draw [domain={-1*\uf}:0, samples=50, smooth]
        plot (\x,{\uf*(3*(\x/\uf)^2 - 1)});
	\draw (0,-1*\u) -- ++(2*\u,0);
	\fill (0,-1*\u) circle [radius=1.2pt];
	\fill (-0.577*\u,0) circle [radius=1.2pt];
	\node at (1*\u,-1.7*\u) {\sz $Df$};
	\node at (1*\u,0.8*\u) {\sz $Dg$};
    \coordinate (start) at (-1.07*\u,2*\u);
    \coordinate (p1) at ({(-1/sqrt(3))*\u},0);
    \coordinate (p2) at (0,-1*\u);
    \coordinate (end) at (2*\u,1*\u);
    \coordinate (c1) at ($(start) + 0.05*(1*\u,-6*\u)$);
    \coordinate (c2) at ($(p1) - 0.1*(2.1*\u,{(-6/sqrt(3))*\u})$);
    \coordinate (c3) at ($(p1) + 0.15*(2.1*\u,{(-6/sqrt(3))*\u})$);
    \coordinate (c4) at ($(p2) - 0.45*(1*\u,0*\u)$);
    \coordinate (c5) at ($(p2) + 0.3*(1*\u,0*\u)$);
    \coordinate (c6) at ($(end) - 0.03*(1*\u,1*\u)$);
    \draw[dashed,smooth] (start)
      .. controls (c1) and (c2) .. (p1)
      .. controls (c3) and (c4) .. (p2)
      .. controls (c5) and (c6) .. (end);
\end{scope}
\begin{scope}[xshift=10*\u]
	\draw (-2*\u,0) -- (2*\u,0);
	\draw (0,-4*\u) -- (0,2*\u);
	\draw (-1*\u,-\tt*\u) -- ++(0,2*\tt*\u);
	\draw (-\tt*\u,-1*\u) -- ++(2*\tt*\u,0);
	\node[below] at (-1*\u,0) {\tiny $-1$};
	\node[left] at (0,-1*\u) {\tiny $-1$};
	\draw (-0.667*\u,-4*\u) -- (0,0);
	\draw (0,0) -- ++(2*\u,0);
	\fill (0,0) circle [radius=1.2pt];
	\fill (-0.577*\u,-6*0.577*\u) circle [radius=1.2pt];
	\node at (1*\u,-0.6*\u) {\sz $D^2f$};
	\node at (1*\u,0.9*\u) {\sz $D^2g$};
    \coordinate (start) at (-0.64*\u,-4*\u);
    \coordinate (p1) at ({(-1/sqrt(3))*\u},{(-6/sqrt(3))*\u});
    \coordinate (p2) at ({(-0.666/sqrt(3))*\u},{(-6*0.666/sqrt(3))*\u});
    \coordinate (p3) at ({(-0.333/sqrt(3))*\u},{(-6*0.333/sqrt(3))*\u});
    \coordinate (p4) at (0,0);
    \coordinate (end) at (2*\u,0.3*\u);
    \coordinate (c1) at ($(start) + 0.05*(0.7*\u,6*\u)$);
    \coordinate (c2) at ($(p1) - 0.05*(0.5*\u,6*\u)$);
    \coordinate (c3) at ($(p1) + 0.1*(0.5*\u,6*\u)$);
    \coordinate (c4) at ($(p2) - 0.1*(1.7*\u,6*\u)$);
    \coordinate (c5) at ($(p2) + 0.1*(1.7*\u,6*\u)$);
    \coordinate (c6) at ($(p3) - 0.1*(0.7*\u,6*\u)$);
    \coordinate (c7) at ($(p3) + 0.2*(0.7*\u,6*\u)$);
    \coordinate (c8) at ($(p4) - 0.12*(1*\u,1*\u)$);
    \coordinate (c9) at ($(p4) + 0.2*(1*\u,1*\u)$);
    \coordinate (c10) at ($(end) - 0.03*(8*\u,1*\u)$);
    \draw[dashed,smooth] (start)
      .. controls (c1) and (c2) .. (p1)
      .. controls (c3) and (c4) .. (p2)
      .. controls (c5) and (c6) .. (p3)
      .. controls (c7) and (c8) .. (p4)
      .. controls (c9) and (c10) .. (end);
\end{scope}
\end{scope}
\node[below] at (current bounding box.south) {\sz Figure 1};
\end{tikzpicture}
\rule{1.5cm}{0cm}
\begin{tikzpicture}
\def\u{0.5cm}
\def\uf{0.5} 
\def\tt{0.05}
\begin{scope}
\begin{scope}
  	\draw (-2*\u,0) -- (2*\u,0);
	\draw (0,-4*\u) -- (0,2*\u);
	\draw (-1*\u,-\tt*\u) -- ++(0,2*\tt*\u);
	\draw (-\tt*\u,-1*\u) -- ++(2*\tt*\u,0);
	\node[below] at (-1*\u,0) {\tiny $-1$};
	\node[left] at (0,-1*\u) {\tiny $-1$};
    \draw [domain={-1.7*\uf}:0, samples=50, smooth]
        plot (\x,{\uf*((\x/\uf)^3 - (\x/\uf))});
	\draw (0,0) -- (2*\u,-2*\u);
	\fill (0,0) circle [radius=1.2pt];
	\fill (-0.577*\u,0.385*\u) circle [radius=1.2pt];
	\node at (1.2*\u,-0.45*\u) {\sz $f$};
	\node at (1.2*\u,-2.5*\u) {\sz $g$};
    \coordinate (start) at (-1.75*\u,-3.213*\u);
    \coordinate (p1) at ({(-1/sqrt(3))*\u},{(2/3^1.5)*\u});
    \coordinate (p2) at (0,0);
    \coordinate (end) at (2*\u,-3*\u);
    \coordinate (c1) at ($(start) + 0.5*(1*\u,7.67*\u)$);
    \coordinate (c2) at ($(p1) - 0.3*(1*\u,0*\u)$);
    \coordinate (c3) at ($(p1) + 0.3*(1*\u,0*\u)$);
    \coordinate (c4) at ($(p2) - 0.3*(1*\u,-1*\u)$);
    \coordinate (c5) at ($(p2) + 0.5*(1*\u,-1*\u)$);
    \coordinate (c6) at ($(end) - 0.5*(1*\u,-2*\u)$);
    \draw[dashed,smooth] (start)
      .. controls (c1) and (c2) .. (p1)
      .. controls (c3) and (c4) .. (p2)
      .. controls (c5) and (c6) .. (end);
\end{scope}
\begin{scope}[xshift=5*\u]
	\draw (-2*\u,0) -- (2*\u,0);
	\draw (0,-4*\u) -- (0,2*\u);
	\draw (-1*\u,-\tt*\u) -- ++(0,2*\tt*\u);
	\draw (-\tt*\u,-1*\u) -- ++(2*\tt*\u,0);
	\node[below] at (-1*\u,0) {\tiny $-1$};
	\node[left] at (0,-1*\u) {\tiny $-1$};
    \draw [domain={-1*\uf}:0, samples=50, smooth]
        plot (\x,{\uf*(3*(\x/\uf)^2 - 1)});
	\draw (0,-1*\u) -- ++(2*\u,0);
	\fill (0,-1*\u) circle [radius=1.2pt];
	\fill (-0.577*\u,0) circle [radius=1.2pt];
	\node at (1*\u,-0.5*\u) {\sz $Df$};
	\node at (1*\u,-1.9*\u) {\sz $Dg$};
    \coordinate (start) at (-1.07*\u,2*\u);
    \coordinate (p1) at ({(-1/sqrt(3))*\u},0);
    \coordinate (p2) at (0,-1*\u);
    \coordinate (end) at (2*\u,-1.7*\u);
    \coordinate (c1) at ($(start) + 0.05*(1*\u,-6*\u)$);
    \coordinate (c2) at ($(p1) - 0.1*(2.1*\u,{(-6/sqrt(3))*\u})$);
    \coordinate (c3) at ($(p1) + 0.15*(2.1*\u,{(-6/sqrt(3))*\u})$);
    \coordinate (c4) at ($(p2) - 0.45*(1*\u,0*\u)$);
    \coordinate (c5) at ($(p2) + 0.3*(1*\u,0*\u)$);
    \coordinate (c6) at ($(end) - 0.03*(1*\u,-1*\u)$);
    \draw[dashed,smooth] (start)
      .. controls (c1) and (c2) .. (p1)
      .. controls (c3) and (c4) .. (p2)
      .. controls (c5) and (c6) .. (end);
\end{scope}
\begin{scope}[xshift=10*\u]
	\draw (-2*\u,0) -- (2*\u,0);
	\draw (0,-4*\u) -- (0,2*\u);
	\draw (-1*\u,-\tt*\u) -- ++(0,2*\tt*\u);
	\draw (-\tt*\u,-1*\u) -- ++(2*\tt*\u,0);
	\node[below] at (-1*\u,0) {\tiny $-1$};
	\node[left] at (0,-1*\u) {\tiny $-1$};
	\draw (-0.667*\u,-4*\u) -- (0,0);
	\draw (0,0) -- ++(2*\u,0);
	\fill (0,0) circle [radius=1.2pt];
	\fill (-0.577*\u,-6*0.577*\u) circle [radius=1.2pt];
	\node at (1*\u,0.6*\u) {\sz $D^2f$};
	\node at (1*\u,-0.8*\u) {\sz $D^2g$};
    \coordinate (start) at (-0.64*\u,-4*\u);
    \coordinate (p1) at ({(-1/sqrt(3))*\u},{(-6/sqrt(3))*\u});
    \coordinate (p2) at ({(-0.666/sqrt(3))*\u},{(-6*0.666/sqrt(3))*\u});
    \coordinate (p3) at ({(-0.333/sqrt(3))*\u},{(-6*0.333/sqrt(3))*\u});
    \coordinate (p4) at (0,0);
    \coordinate (end) at (2*\u,-0.3*\u);
    \coordinate (c1) at ($(start) + 0.05*(0.7*\u,6*\u)$);
    \coordinate (c2) at ($(p1) - 0.05*(0.5*\u,6*\u)$);
    \coordinate (c3) at ($(p1) + 0.1*(0.5*\u,6*\u)$);
    \coordinate (c4) at ($(p2) - 0.1*(1.7*\u,6*\u)$);
    \coordinate (c5) at ($(p2) + 0.1*(1.7*\u,6*\u)$);
    \coordinate (c6) at ($(p3) - 0.1*(0.7*\u,6*\u)$);
    \coordinate (c7) at ($(p3) + 0.2*(0.7*\u,6*\u)$);
    \coordinate (c8) at ($(p4) - 0.12*(1*\u,0*\u)$);
    \coordinate (c9) at ($(p4) + 0.05*(1*\u,0*\u)$);
    \coordinate (c10) at ($(end) - 0.7*(1*\u,0*\u)$);
    \draw[dashed,smooth] (start)
      .. controls (c1) and (c2) .. (p1)
      .. controls (c3) and (c4) .. (p2)
      .. controls (c5) and (c6) .. (p3)
      .. controls (c7) and (c8) .. (p4)
      .. controls (c9) and (c10) .. (end);
\end{scope}
\end{scope}
\node[below] at (current bounding box.south) {\sz Figure 2};
\end{tikzpicture}
\]

If we remove $0$ from the interpolation set $E$, then we get a comonotone approximation as in Figure 3. Note that on $[0,\infty)$, the approximations $D^jg$ are alternately above and below $D^jf$, as specified in Theorem \ref{t:approx.a.poly.on.platform} \ref{3.4.4}.
\[
\begin{tikzpicture}
\def\u{0.5cm}
\def\uf{0.5} 
\def\tt{0.05}
\begin{scope}
	\draw (-2*\u,0) -- (2*\u,0);
	\draw (0,-4*\u) -- (0,2*\u);
	\draw (-1*\u,-\tt*\u) -- ++(0,2*\tt*\u);
	\draw (-\tt*\u,-1*\u) -- ++(2*\tt*\u,0);
	\node[below] at (-1*\u,0) {\tiny $-1$};
	\node[left] at (0,-1*\u) {\tiny $-1$};
    \draw [domain={-1.7*\uf}:0, samples=50, smooth]
        plot (\x,{\uf*((\x/\uf)^3 - (\x/\uf))});
	\draw (0,0) -- (2*\u,-2*\u);
	\fill (0,0) circle [radius=1.2pt];
	\fill (-0.577*\u,0.385*\u) circle [radius=1.2pt];
	\node at (1.2*\u,-1.8*\u) {\sz $g$};
	\node at (1.2*\u,-0.5*\u) {\sz $f$};
    \coordinate (start) at (-1.8*\u,-3.213*\u);
    \coordinate (p1) at ({(-1/sqrt(3))*\u},{(2/3^1.5)*\u});
    \coordinate (p2) at (0,-0.2*\u);
    \coordinate (end) at (2*\u,-2.07*\u);
    \coordinate (c1) at ($(start) + 0.05*(1*\u,7.67*\u)$);
    \coordinate (c2) at ($(p1) - 0.8*(1*\u,0*\u)$);
    \coordinate (c3) at ($(p1) + 0.3*(1*\u,0*\u)$);
    \coordinate (c4) at ($(p2) - 0.1*(1*\u,-0.6*\u)$);
    \coordinate (c5) at ($(p2) + 0.3*(1*\u,-0.6*\u)$);
    \coordinate (c6) at ($(end) - 0.03*(1*\u,-1*\u)$);
    \draw[dashed,smooth] (start)
      .. controls (c1) and (c2) .. (p1)
      .. controls (c3) and (c4) .. (p2)
      .. controls (c5) and (c6) .. (end);
\end{scope}
\begin{scope}[xshift=5*\u]
	\draw (-2*\u,0) -- (2*\u,0);
	\draw (0,-4*\u) -- (0,2*\u);
	\draw (-1*\u,-\tt*\u) -- ++(0,2*\tt*\u);
	\draw (-\tt*\u,-1*\u) -- ++(2*\tt*\u,0);
	\node[below] at (-1*\u,0) {\tiny $-1$};
	\node[left] at (0,-1*\u) {\tiny $-1$};
    \draw [domain={-1*\uf}:0, samples=50, smooth]
        plot (\x,{\uf*(3*(\x/\uf)^2 - 1)});
	\draw  (0,-1*\u) -- ++(2*\u,0);
	\fill (0,-1*\u) circle [radius=1.2pt];
	\fill (-0.577*\u,0) circle [radius=1.2pt];
	\node at (1*\u,-1.7*\u) {\sz $Df$};
	\node at (1*\u,-0.5*\u) {\sz $Dg$};
    \coordinate (start) at (-0.9*\u,2*\u);
    \coordinate (p1) at ({(-1/sqrt(3))*\u},0);
    \coordinate (p2) at (0,-0.85*\u);
    \coordinate (end) at (2*\u,-0.95*\u);
    \coordinate (c1) at ($(start) + 0.05*(1*\u,-7.67*\u)$);
    \coordinate (c2) at ($(p1) - 0.2*(1*\u,-6*\u)$);
    \coordinate (c3) at ($(p1) + 0.12*(1*\u,-6*\u)$);
    \coordinate (c4) at ($(p2) - 0.5*(1*\u,-0.2*\u)$);
    \coordinate (c5) at ($(p2) + 0.2*(1*\u,-0.2*\u)$);
    \coordinate (c6) at ($(end) - 0.8*(1*\u,0*\u)$);
    \draw[dashed,smooth] (start)
      .. controls (c1) and (c2) .. (p1)
      .. controls (c3) and (c4) .. (p2)
      .. controls (c5) and (c6) .. (end);
\end{scope}
\begin{scope}[xshift=10*\u]
	\draw (-2*\u,0) -- (2*\u,0);
	\draw (0,-4*\u) -- (0,2*\u);
	\draw (-1*\u,-\tt*\u) -- ++(0,2*\tt*\u);
	\draw (-\tt*\u,-1*\u) -- ++(2*\tt*\u,0);
	\node[below] at (-1*\u,0) {\tiny $-1$};
	\node[left] at (0,-1*\u) {\tiny $-1$};
	\draw (-0.667*\u,-4*\u) -- (0,0);
	\draw (0,0) -- ++(2*\u,0);
	\fill (0,0) circle [radius=1.2pt];
	\fill (-0.577*\u,-6*0.577*\u) circle [radius=1.2pt];
	\node at (1*\u,-0.8*\u) {\sz $D^2g$};
	\node at (1*\u,0.7*\u) {\sz $D^2f$};
    \coordinate (start) at (-0.6*\u,-4*\u);
    \coordinate (p1) at ({(-1/sqrt(3))*\u},{(-6/sqrt(3))*\u});
    \coordinate (p2) at ({(-0.666/sqrt(3))*\u},{(6*-0.666/sqrt(3))*\u});
    \coordinate (p3) at (0,-0.2*\u);
    \coordinate (end) at (2*\u,-0.07*\u);
    \coordinate (c1) at ($(start) + 0.05*(1*\u,7*\u)$);
    \coordinate (c2) at ($(p1) - 0.03*(1*\u,6*\u)$);
    \coordinate (c3) at ($(p1) + 0.15*(1*\u,2*\u)$);
    \coordinate (c4) at ($(p2) - 0.05*(1*\u,8*\u)$);
    \coordinate (c5) at ($(p2) + 0.05*(1*\u,8*\u)$);
    \coordinate (c6) at ($(p3) - 0.3*(1*\u,0.2*\u)$);
    \coordinate (c7) at ($(p3) + 0.2*(1*\u,0.2*\u)$);
    \coordinate (c8) at ($(end) - 0.5*(1*\u,0*\u)$);
    \draw[dashed,smooth] (start)
      .. controls (c1) and (c2) .. (p1)
      .. controls (c3) and (c4) .. (p2)
      .. controls (c5) and (c6) .. (p3)
      .. controls (c7) and (c8) .. (end);
\end{scope}
\node[below] at (current bounding box.south) {\sz Figure 3};
\end{tikzpicture}
\]
\end{exmp}

When a function is $C^n$ but on some intervals is $C^m$ for some $m>n$, Theorem \ref{t:C.n.Hoischen.C.n.fcts} can sometimes be adapted to allow better approximation of the function on the intervals where it is $C^m$. The following example illustrates this.

\begin{exmp}
Consider the closed discrete set $E=\{0\}$. Take $f(x)=x^3$ when $x\leq 0$, and let the restriction of $f$ to $[0,\infty)$ be any $C^2$ function so that $D^2f(x)$ is increasing but not constant on $[0,\infty)$ with $D^jf(0)=0$, $j=0,1,2$. Then $f$ is a $C^2$ function and there is an entire function $g$ taking real values on $\R$ so that
\begin{enumerate}
\item
$D^jg(0)=D^jf(0)=0$, $j=0,1,2$, $D^3g(0)=6$ (which is the left derivative $D^{3}_-f(0)$).

\item
$|D^jg(x)-D^jf(x)|<\ep(x)$, $x\leq 0$, $j=0,1,2,3$.

\item
$|D^jg(x)-D^jf(x)|<\ep(x)$, $x\geq 0$, $j=0,1,2$.

\item
$D^3g(x)>0$ for all $x\in\R$.

\item
$D^jg$ is comonotone with $D^jf$ with $\{0\}$ as witnessing set, $j=0,1,2$.
\end{enumerate}

\begin{proof}
We can add to the given objects in the statement of Theorem \ref{t:C.n.Hoischen.C.n.fcts} a function $\xi\colon E\to\R$ so that $\xi(a)>0$ for all $a\in E$, and require in Theorem \ref{t:C.n.Hoischen.C.n.fcts} \ref{p:C.n.Hoischen:2} that when $a\in E\sm (P^n_{\min}\cup P^n_{\max})$ is not a turning point for $D^nf$, $|D^{n+1}g(a)|=\xi(a)$. The proof requires only minor alterations to accommodate this change. (For example, in Claim \ref{claim:g_I.properties} \ref{cl.gI.2} and \ref{cl.gI.2a}, we want to say $D^{n+1}g_{I_k}(a)=s\xi(a)$.)

In our present setting, we have $n=2$ and for $x\leq 0$,  $D^nf(x)=D^2f(x)=6x$, so $P^n_{\min}=\e$. Also, for $x\geq 0$, $D^2f$ is increasing but not constant, so $0\in E\sm (P^n_{\min}\cup P^n_{\max})$ and $0$ is not a turning point for $D^2f$. Since $(Q_n)$ holds for $n\leq 3$, we can apply Theorem \ref{t:C.n.Hoischen.C.n.fcts} for $n=2$, modified as above and taking $E=\{0\}$, $\xi(0)=6$, and taking $\ep(x)/2$ in the place of $\ep(x)$, gives an entire function $h$ so that
\begin{enumerate}
\item\label{h1}
$\hspace{-5pt}{}_{h}$ $D^jh(0)=D^jf(0)$, $j=0,1,2$, and $D^3h(0)=6$.

\item\label{h2}
$\hspace{-5pt}{}_{h}$ $D^jh(x)\not=0$ when $x\not=0$, $j=1,2$.

\item\label{h3}
$\hspace{-5pt}{}_{h}$ $D^3h(x)\not=0$ for all $x\in\R$.

\item\label{h4}
$\hspace{-5pt}{}_{h}$ $|D^jh(x)-D^jf(x)|<\ep(x)/2$ for all $x\in\R$, $j=0,1,2$. (Note that $W_2=\e$.)

\item\label{h5}
$\hspace{-5pt}{}_{h}$ $D^jh$ is comonotone with $D^jf$, with $\{0\}$ as witnessing set, $j=0,1,2$.
\end{enumerate}
Let $w(x)=x^3$ for $x\leq 0$, $w(x) = h(x)$ for $x\geq 0$. Then $w$ is a $C^3$ function which retains the properties (1)$_h$\,--\,(5)$_h$ of $h$.
Apply Theorem \ref{t:piecewise.monotone.interpolation} to $w$ for $n=m=3$, $T=E=\{0\}$, and with $U_i=\e$ for $i=0,1,2,3$, to get an entire function $g$ satisfying
\begin{enumerate}
\item\label{p4}
$\hspace{-5pt}{}_{g}$ $|D^jg(x)-D^jw(x)|<\ep(x)/2$ for all $x\in\R$, $j=0,1,2,3$.

\item\label{p1}
$\hspace{-5pt}{}_{g}$ $D^jg(0)=D^jw(0)$, $j=0,1,2,3$.

\item\label{p5}
$\hspace{-5pt}{}_{g}$ $D^jg(x)$ has the same sign as $D^kw(x)$, $x\in\R$, $k=0,1,2,3$.
\end{enumerate}
From the properties (1)$_h$\,--\,(5)$_h$ of $w$ and \ref{p4}$_g$\,--\,\ref{p5}$_g$ of $g$, we see that $g$ is as desired.
\end{proof}
\end{exmp}

\end{document}